\theoremstyle{plain}
\newtheorem{lem}{Lemma}[section]
\newtheorem{thm}[lem]{Theorem}
\newtheorem{prop}[lem]{Proposition}
\theoremstyle{definition}
\newtheorem{Def}[lem]{Definition}
\newtheorem{Defs}[lem]{Definitions}
\newtheorem{assum}[lem]{Assumptions}
\newtheorem{Rem}[lem]{Remark}
\newcommand{\h}{\mathbf{H}}
\newcommand{\ve}{\mathbf{V}}
\newcommand{\pmat}{\begin{pmatrix}}
\newcommand{\epmat}{\end{pmatrix}}
\newcommand{\DEQS}{\begin{eqnarray*}}
\newcommand{\EEQS}{\end{eqnarray*}}
\newcommand{\DEQSZ}{\begin{eqnarray}}
\newcommand{\EEQSZ}{\end{eqnarray}}
\newcommand{\lqq}{\lefteqn}
\newcommand{\lk}{\left}
\newcommand{\rk}{\right}
\newcommand{\levy}{L\'evy }
\newcommand{\bu}{\mathbf{u}}
\newcommand{\bv}{\mathbf{v}}
\newcommand{\bun}{\mathbf{u}_n}
\newcommand{\bxi}[1]{\bx^{(#1)}}
\newcommand{\gi}[1]{\gamma^{(#1)}}
\newcommand{\biz}[1]{\mathcal{B}_z^{(#1)}}
\newcommand{\ji}[1]{J^{(#1)}}
\newcommand{\ki}[1]{K^{(#1)}}
\newcommand{\zl}[1]{Z^\lambda(#1)}
\newcommand{\ekl}{\tilde{\eta}^\lambda}
\newcommand{\ebl}{\bar{\eta}^\lambda}
\newcommand{\efl}{\nu^\lambda}
\newcommand\del[1]{}
\newcommand\cadlag{c{\`a}dl{\`a}g }
\newcommand{\lve}{\lvert}
\newcommand{\rve}{\rvert}
\newcommand{\Lve}{\lVert}
\newcommand{\Rve}{\rVert}
\newcommand{\err}{\mathbf{R}}
\newcommand{\bb}{\mathbf{B}}
\newcommand{\EE}{\mathbb{E}}
\newcommand{\CP}{{{ \mathcal P }}}
\newcommand{\rA}{\mathrm{A}}
\newcommand{\bx}{\mathbf{X}}
\DeclareMathOperator{\gal}{Linspan}
\def\eps{\varepsilon}
\newcommand{\cprn}{\CP^R_{t,n}}
\newcommand{\burn}{\bu^R_n}
\newcommand{\Burn}{U^R_n}
\title{Ergodicity of stochastic shell models driven by pure jump noise}
\author[H. Bessaih]{Hakima Bessaih}
\address[H. Bessaih]{Department of Mathematics, University of Wyoming, 1000 East University Avenue, Laramie WY 82071, United States,}
\author[E. Hausenblas]{Erika Hausenblas}
\author[P. Razafimandimby]{Paul Razafimandimby}
\address[E.~Hausenblas and P.~Razafimandimby]{Department of Mathematics and Information Technology, Montanuniversit\"at Leoben,
Fr. Josefstr. 18, 8700 Leoben, Austria}
\email[H.~Bessaih]{bessaih@uwyo.edu}
\email[E.~Hausenblas]{erika.hausenblas@unileoben.ac.at}
\email[P.~Razafimandimby]{paul.razafimandimby@unileoben.ac.at}
\begin{document}
\begin{abstract}
 In the present paper we study a stochastic evolution equation for shell (SABRA \& GOY) models with pure jump \levy noise $L=\sum_{k=1}^\infty l_k(t)e_k$ on a Hilbert space $\h$. Here $\{l_k; k\in \mathbb{N}\}$ is a family of independent and identically distributed (i.i.d.) real-valued pure jump \levy processes and $\{e_k; k\in \mathbb{N}\}$ is an orthonormal basis of $\h$.  We mainly prove that the stochastic system has a unique invariant measure. For this aim we show that  if the \levy measure of each component $l_k(t)$ of $L$ satisfies a certain order and a non-{degeneracy} condition and is  absolutely continuous with respect to the Lebesgue measure, then the
 Markov semigroup associated { with} the unique solution of the system has the strong Feller property. If, furthermore, each $l_k(t)$ satisfies a small deviation property, then 0 is accessible for the dynamics independently of the initial
 condition. Examples of noises satisfying our conditions are a family of i.i.d tempered \levy noises $\{l_k; k\in \mathbb{N}\}$ and $\{l_k=W_k\circ G_k + G_k; k\in \mathbb{N} \}$ where $\{G_k; k \in \mathbb{N}\}$ (resp., $\{W_k; k\in \mathbb{N}\}$) is a sequence of i.i.d subordinator Gamma (resp., real-valued Wiener) processes  with \levy density $f_G(z)=(\vartheta z)^{-1} e^{-\frac z\vartheta} \mathds{1}_{z>0}$. The proof of the strong Feller property relies on the truncation of the nonlinearity and the use of a gradient estimate for the Galerkin system of the truncated equation.
 The gradient estimate is a consequence of a Bismut-Elworthy-Li (BEL) type formula that we prove in the Appendix A of the paper.
\end{abstract}
\subjclass[2000]{60H15, 60H30, 60J75, 76F20, 76D03, 76D06}
\keywords{Stochastic Shell models, SABRA and GOY Shell models, \levy noise, Poisson random measure, Tempered stable processes, Ergodicity, Invariant measure, Strong Feller, Bismut-Elworthy-Li formula}
\maketitle
\section{Introduction}

 In many applied sciences such as
aerodynamics, weather forecasting and hydrology,
 numerical investigation of three dimensional Navier-Stokes equations at high Reynolds' number is ubiquitous. Unfortunately,
  even with the most sophisticated scientific tools, it is a very
challenging task to compute analytically or via direct numerical
simulations the turbulent behavior of  3-D incompressible fluids.
This is due to the large range of scale of motions that need to be
resolved. To tackle this issue,  several models of turbulence that can capture the
physical phenomenon of turbulence in fluid flows at lower
computability cost have been proposed over the last three decades.
One class of these models of turbulence are the \textbf{shell
models}. There are various kind of shell models, but the most
popular in the physics and mathematics literature are the GOY and
SABRA models. The shell models basically describe the evolution of
complex Fourier-like components, denoted
by $u_{n}$ with the associated wave numbers denoted by $k_{n}$ where the discrete index $n$ is referred as the shell index, of a velocity field $u$.
The evolution of the infinite sequence $\{u_n\}_{n=-1}^\infty$ is
given by
\begin{equation}\label{SHELL-0}
\dot u_n(t)+{ \kappa} k_n^2 u_n(t)+b_n(u(t),u(t))=f_n(t, u(t)), \qquad
n=1,2,\ldots
\end{equation}
with $u_{-1}=u_0=0$ and $u_n(t) \in \mathbb C$ for $n \ge 1$. Here
${ \kappa} \ge 0$ and in analogy with Navier-Stokes equations ${ \kappa}$
represents a kinematic viscosity; $k_n=k_0 \lambda^n$
($\lambda>1$) and $f_n$ is a forcing term. The exact form of
$b_n(u,v)$ varies from one model to the other. However in all the
various models, it is assumed that $b_n(u,v)$ is chosen in such a
way that
\begin{equation}\label{incomp_shell}
 \Re \biggl(\displaystyle\sum_{n=1}^{\infty}b_n(u,v)\overline{v}_{n}\biggr)=0,
\end{equation}
where $\Re$ denotes the real part and $\overline x$
 the complex conjugate of $x$.

In particular, we define the bilinear terms $b_n$  as
\[b_n(u,v)=i(a k_{n+1}\overline u_{n+1} \overline v_{n+2}+
            b k_{n}\overline u_{n-1} \overline v_{n+1}-
            a k_{n-1}\overline u_{n-1} \overline v_{n-2}-
            b k_{n-1}\overline u_{n-2} \overline v_{n-1})
\]
in the GOY model (see \cite{G,goy}) and by
\[
b_n(u,v)=-i(a k_{n+1}\overline u_{n+1}  v_{n+2}+
            b k_{n}\overline u_{n-1} v_{n+1}+
            a k_{n-1} u_{n-1} v_{n-2}+
            b k_{n-1} u_{n-2} v_{n-1})
\]
in the SABRA model (see \cite{sabra}). Here $a, b$ are real
numbers. Note that equation \eqref{incomp_shell} implies a formal
law of conservation of energy in the inviscid and 
unforced form of \eqref{SHELL-0}.  The shell
models have similar properties to 2D fluids. In fact, they 
basically consist of infinitely many differential equations
having a structure similar to the Fourier representation of the
Navier-Stokes equations. They are constructed in such a way that
they and the Navier-Stokes equations at high Reynolds' number
exhibit similar statistical properties. Indeed both shell models
and Navier-Stokes models have a finite number of degrees of
freedom, see, for instance, \cite{Constantin}.
Another feature that shell models share with the Navier Stokes
equations is the so called {\sl determining modes}, see the
pioneering work of Foia{\c{s}} and Prodi \cite{MR0223716} for the
case of Navier-Stokes equations and \cite{Constantin} for the
shell models. Furthermore, the interactions in the Fourier space
for the shell models are local and therefore are easier to handle.
As such shell models are much simpler than the Navier-Stokes
equations and are more suitable for the analytical and numerical
investigation towards the understanding of turbulence. Due to
these facts these models and their stochastic counterparts have
been the subject of intensive numerical and analytical studies
during the last two decades. We refer, for instance, to the works
of Barbato et al \cite{Flandoli1}, Bessaih et al \cite{Flandoli3},
Constantin et al \cite{Constantin}, and Ditlvesen \cite{Ditlevsen}
 for more recent and detailed review of results related to the physical and mathematical theory of shell models.


In recent years the mathematical analysis of stochastic partial
differential equations driven by \levy processes began to draw
more and more attention. There are several examples where the
Gaussian noise is not well suited to represent realistically 
external forces. For example, if the ratio between the time scale
of the deterministic part and that of the stochastic noise is
large, then the temporal structure of the forcing in the course of
each event has no influence on the overall dynamics, and - at the
time scale of the deterministic process - the external forcing can
be modeled as a sequence of episodic instantaneous impulses. This
happens for example in Climatology (see, for instance,
\cite{MR2205118}). Often the noise observed by time series is
typically asymmetric, heavy-tailed and has  non trivial kurtosis.
These are all features which cannot be captured by a Gaussian
noise, but by a \levy noise with appropriate parameters. From the
mathematical point of view, \levy randomness requires other
techniques, and is  intricate and far from amenable to
mathematical analysis. Despite these facts the mathematical study
of the long-time behavior, in particular ergodicity, of SPDEs with
\levy noise are still at its infancy. This is mainly due to the
fact that the numerous results for Wiener driven models cannot be
in general transferred to SPDEs driven by L\'evy noise. The
analysis of the long-time behavior of SPDEs is more complicated
for L\'evy driven SPDEs. In addition, the dynamical behavior of
SPDEs changes essentially, if the Brownian noise is replaced by
L\'evy noise. E.g.\, Imkeller and Pavlyukevich investigated in
\cite{MR2220901,MR2205118} the dynamical behavior of systems
driven by a L\'evy noise and showed that the escape times from
certain potentials are exponentially distributed and differ
essentially from the escape times of the corresponding dynamical
systems driven by Brownian noise. One should note that there
are now several papers treating the ergodicity of nonlinear SPDEs
with \levy noise, see for example \cite{BG+CH}, \cite{SP-et
al}, \cite{Nersesyan},  \cite{SP+JZ}, \cite{Priola-et-al},
\cite{Priola et al-2} and \cite{EP+JZ}.

 In the present paper we investigate the ergodicity of stochastic shell models driven by random external forcing of jump type.
 More precisely, we are interested in a model equation of the form
\begin{equation}
 \begin{split}\label{SHELL-1}
  & d\bu(t)+[\kappa \rA\bu(t)+\bb(\bu(t),\bu(t))]dt= \sum_{k=1}^\infty \biggl(\int_{\mathbb{R}_0} z d\bar{\eta}_k(z,t)\biggr)\beta_k e_k
  \\
  & \bu(0)=\xi\in \h,
 \end{split}
 \end{equation}
where $\kappa$ is a positive number, $\{e_k, k=1, 2, \ldots\}$ is the orthonormal basis of a given Hilbert space $\h$ and
$$\int_{\mathbb{R}_0} z d\bar{\eta}_k(z,t):= \int_{\mathbb{R}_0} \mathds{1}_{\{\lvert z\rvert\le1\} } z\tilde{\eta}_k(dz,dt)
  +\int_{\mathbb{R}_0} \mathds{1}_{\{\lvert z\rvert>1\} } z \eta_k(dz,dt).$$
In \eqref{SHELL-1}, $\rA$ is a linear map  and $\bb$ is a bilinear
map on the underlying Hilbert space $\h$. The family $\{ \eta_k;
k=1, 2, \ldots \}$ represents a family of mutually independent Poisson random measures
with $\sigma$-finite \levy measures $\{\nu_k; k=1, 2, \ldots\}$ on
$\mathbb{R}_0:=\mathbb{R}\backslash \{0\}$. For each $k$ the
symbol $\tilde{\eta}_k$ represents the compensated
 Poisson random measure associated to $\eta_k$ and the family of compensators is denoted by $\{\nu_k(dz)dt; k=1, 2, \ldots\}$. The family $\{\beta_k; k=1, 2, \ldots\}$
is a family of positive numbers representing the roughness of the
noise. The maps $\rA$ and $\bb$ are carefully chosen so that it
can model the nonlinear terms of the GOY and SABRA shell models
defined { previously}.

In this paper, we mainly prove that if  the \levy measure of each component $l_k(t)$ of $L$ satisfies a certain order and non-{degeneracy} condition, and is { absolutely continuous} w.r.t. the Lebesgue measure and if each \levy process $l_k(t):=\int_{\mathbb{R}_0} z d\bar{\eta}_k(z,t)$ satisfies a small deviation property, then the stochastic evolution equations \eqref{SHELL-1} has a
unique invariant measure (see Theorem \ref{SHELL}). Examples of noises satisfying our conditions are a family of i.i.d tempered \levy noises $\{l_k; k\in \mathbb{N}\}$ and $\{l_k=W_k\circ G_k + G_k; k\in \mathbb{N} \}$ where $\{G_k; k \in \mathbb{N}\}$ (resp., $\{W_k; k\in \mathbb{N}\}$) is a sequence of i.i.d subordinator Gamma (resp., real-valued Wiener) processes  with \levy density $f_G(z)=(\vartheta z)^{-1} e^{-\frac z\vartheta} \mathds{1}_{z>0}$. { We mainly show that}
the Markovian semigroup associated with the solution of
\eqref{SHELL-1} has strong Feller property and that $0\in \h$ is an accessible point for the dynamic. The strong Feller property is the most 
challenging part of the proof. The
strategy of the proof of this result is based on the work
\cite{FF+BM}. Namely, we firstly truncate the nonlinearity \eqref{SHELL-1} and show  that the Galerkin approximation
of this modified/truncated version of \eqref{SHELL-1} has the strong
Feller property. This was achieved thanks to a Bismut-Elworthy-Li (BEL) type formula (see Lemma \ref{BEL}) that we state and prove in Appendix \ref{SEC-BEL}.
 Lemma
 \ref{BEL} is very similar to \cite[Theorem 1]{Takeuchi}. However, for each $n\in \mathbb{N}$ our Galerkin equations is a system of stochastic differential
 equations driven by random measures with L\'evy measure on $\mathbb{R}^n$ which, in contrast to \cite{Takeuchi}, do not necessarily have a smooth density. Nevertheless,
 we should note that
 the idea of the proof of Lemma \ref{BEL} is based on some modifications of \cite[Proof of Theorem 1]{Takeuchi}  and some arguments (change of measures)
 from \cite{Ishikawa}. The main assumptions for this BEL type formula to hold is an order condition type, a non-degeneracy (see Assumption \ref{Lev-Meas}-\eqref{LMiii}) and absolute continuity w.r.t. the Lebesgue measure (Assumption \ref{Lev-Meas}-\eqref{LMi}) of  the \levy measure of each $l_k(t)$. Secondly, we prove that the truncated equations itself has the strong Feller property. This result is based on \cadlag property of
 stochastic convolution $\mathfrak{S}(t)$ which is solution to the following equation
 $$ d\mathfrak{S}(t)+\kappa \rA\mathfrak{S}(t) dt= \sum_{k=1}^\infty \biggl(\int_{\mathbb{R}_0} z d\bar{\eta}_k(z,t)\biggr)\beta_k e_k
  , \quad
   \mathfrak{S}(0)=0.$$
Here recent results about time regularity of stochastic convolutions proved in \cite{SP+JZ-13} play an important role. Thirdly, since the solution to the original equation
 has good moment estimates we can show that the strong Feller property is preserved  when we remove the truncation
 function. To complete the proof of our main result we show in
 Proposition \ref{IRRED} that any ball centered at 0 with
 sufficiently large radius is visited, with positive probability, by the process $\bu$
 independently of the initial condition $\xi\in \h$. This fact
 holds under the condition that each one dimensional \levy process
 $l_k(t)$ has the
 small deviation property (see Proposition \ref{small-dev}).

 We should note that the nonlinear term of \eqref{SHELL-1} does not fall in the framework of the papers \cite{BG+CH, Nersesyan, Priola-et-al, Priola et al-2, EP+JZ}. 
 The paper \cite{ZD+YX} and the book \cite{KS12} studied the uniqueness of invariant measure
 associated to the Markov semigroup of the stochastic Navier-Stokes equations with \levy noise.  In \cite{ZD+YX} the driving noise is the sum of a non-degenerate
 Wiener noise and an infinite activity jump process. Thanks to the non-degeneracy of the Wiener noise the gradient estimate method in \cite{FF+BM, Fe97} can be adapted to their framework.
 In our case we closely follow the scheme in \cite{FF+BM} for the proof of the smoothing property of the semigroup, but we have to prove a BEL-type formula for pure jump noise.
 The authors of \cite{KS12} state that stochastic Navier-Stokes equations with Poisson process as a noise term has a unique invariant measure. They
 also provide the rate of convergence to the invariant measure. The proof of these results follows from the arguments in \cite{Nersesyan} which are very sophisticated and complicated to be explained here.
 Since the \levy measure of a Poisson process is a finite measure and we consider \levy processes with $\sigma$-finite \levy measure,
 the proof of \cite{Nersesyan} could not be used for
 our model.

 To close this introduction we give the structure of this paper. In Section \ref{Nota}
 we define most of the notations used in this paper 
 and  the assumptions frequently imposed
 throughout the paper. The main result (see Theorem \ref{Main
 result}) is stated and proved in Section \ref{ERG-SHELL}. The
 proof of this main theorem relies on two important propositions
 (Proposition \ref{TRU-CUT} and Proposition \ref{IRRED}) that are
 also stated and proved in Section \ref{ERG-SHELL}. Section
 \ref{SEC-MOD-SHELL} is devoted to the analytical study of the
 truncated version of \eqref{Hydro} (see Eq. \eqref{Hydro-1}).
 There, we mainly prove that the finite dimensional approximation of the
 truncated equations satisfy the strong Feller property which is
 preserved by passage to the limit. In Appendix \ref{SEC-BEL}, we
 prove a Bismut-Elworthy-Li type formula for system of SDEs driven
 by pure jump noise. In Appendix \ref{SEC-GRAD-EST} we prove an
 estimate for the gradient of the semigroup of the system of SDEs
 { from}  Appendix \ref{SEC-BEL}. In Appendix \ref{APP-CONV} we derive the
 necessary convergence which enables us to transfer the strong
 Feller property from the semigroup of the Galerkin approximation of the truncated
 equations to the semigroup of the infinite dimensional truncated
 equation.

\section{Notation and Assumptions}\label{Nota}
In this section we will introduce the necessary notation and
assumptions in this paper. We will mainly follow the notation in
\cite{HB+BF-12}.

Throughout this work we will identify the field of complex numbers
$\mathbb{C}$ with $\mathbb{R}^2$. That is, any complex number of
the form $x=x_1+ix_2$ will be identified with $(x_1,x_2)\in
\mathbb{R}^2$. As usual, for $x=(x_1,x_2) \in \mathbb R^2$ we set
$|x|^2= x_1^2+x_2^2$ and $x\cdot y=x_1y_1+x_2y_2$ is the scalar
product in $\mathbb R^2$. For a Banach space $\mathrm{B}$ we
denote by $\mathrm{B}^\ast$ its dual space.

Let $\h$ be the space defined by
$$
\h=\{\bu=(\bu_1, \bu_2, \ldots) \in (\mathbb R^2)^\infty:
 \sum_{n=1}^\infty|\bu_n|^2<\infty \}.
$$
This is a Hilbert space and we denote its norm by  $\vert \cdot \rvert$ and its scalar
product by $\langle \bu,\bv\rangle=\sum_{n=1}^\infty \bu_n
\cdot \bv_n$.

Let $\rA$ be a  linear map with domain
$D(\rA)$ on $\h$. We impose the following set of conditions on
$\rA$.
\begin{assum}\label{A}
\begin{enumerate}[(i)]
\item \label{Ai} We assume that $\rA$ is a self-adjoint positive
operator and its domain $D(\rA)$ is dense and
compact in $\h$.

 \item \label{Aii} We assume that there exists $k_0$ and $\lambda>1$
such that the eigenvalues of $\rA$ are given by
$$ \lambda_j=k_0 \lambda^{2j}.$$

\item \label{Aiii} We also suppose that the eigenfunctions $\{
e_1, e_2, \ldots \}$ of $\rA$ form an orthonormal basis of $\h$.

\end{enumerate}
\end{assum}
 With Assumption \ref{A}-\eqref{Ai} the fractional power
operators $\rA^\gamma$, $\gamma\ge 0$ are well-defined; they are
also self-adjoint, positive and invertible with inverse
$\rA^{-\gamma}$. We denote by $\ve_\gamma:=
D(\rA^\frac{\gamma}{2})$, $\gamma\ge 0$ the domain of
$\rA^\gamma$. It is a Hilbert space endowed with the graph norm. The
dual space $\ve_\gamma^\ast$ of $\ve_\gamma$, $\gamma\ge 0$, w.r.t.
to the inner product of $\h$ can be identified with
$D(\rA^{-\frac{\gamma}{2}})$. For $\gamma=\frac12$ we set $\ve:=
\ve_1$ and we denote its norm by $\Vert \cdot \Vert:= \vert \cdot \vert + \lvert \rA^\frac12 \cdot \vert $. Observe also that Assumption \ref{A} implies the following Poincar\'e type inequality
\begin{equation}\label{Poincare}
 \lve \cdot \rve^2\le \lambda_1^{-1}\Lve \cdot \Rve^2,
\end{equation}
where $\lambda_1$ is the smallest eigenvalue of $\rA$. Thus the
norm $\Vert \cdot \Vert$ is equivalent to $\lvert \rA^\frac12
\cdot\vert$.

When identifying $\h$ with its dual $\h^\ast$ we have the Gelfand
triple $\ve\subset \h \subset \ve^\ast$. We denote by $\langle
\bu,\bv\rangle$ the duality between $\ve^\ast$ and $\ve$ such that
$\langle \bu,\bv\rangle=(\bu,\bv)$ for $\bu\in \h$ and $\bv\in
\ve$.

Now, let $\bb:\h \times \h \to \ve^\ast$ be a bilinear
map satisfying the following set of conditions.
\begin{assum}\label{B}
 We assume that $\bb:\h \times \h \to \ve^\ast$ is a bilinear map
 satisfying the following three properties.
 \begin{enumerate}[(a)]
  \item \label{B-a} There exists a number $C_0>0$ such that for any $\bu, \bv\in \h$
  \begin{equation}
\Vert \bb(\bu,\bv)\Vert_{\ve^\ast}\le C_0 \lvert \bu \rvert \lvert
\bv\rvert.
  \end{equation}
  \item \label{B-b} There exists a constant $C_1>0$ such that
  \begin{equation}
\vert \bb(\bu,\bv)\rvert\le
\begin{cases}
C_1 \Vert \bu\Vert\, \lvert \bv \rvert \text{ for any } \bu \in
\ve
,\bv \in \h,\\
C_1 \lvert \bu \rvert  \,\Vert \bv\Vert \text{ for any } \bv \in
\ve
,\bu \in \h. \\
\end{cases}
  \end{equation}
  \item \label{B-c} For any $\bu \in \h, \bv \in \ve$
  \begin{equation}
\langle \bb(\bu,\bv),\bv\rangle=0.
  \end{equation}
 \end{enumerate}
\end{assum}

In our framework $\bb$ is defined by $\bb:(
{\mathbb
R}^2)^\infty\times ({\mathbb R}^2)^\infty\to ({\mathbb
R}^2)^\infty$
\[
 \bb(u,v)=(B_1(u,v), B_2(u,v),\ldots),
\]
where  $B_n=(B_{n,1},B_{n,2})$ and $B_{n,1}$ and $B_{n,2}$ are,
respectively,  the real parts and the imaginary parts of the $b_n$
given in the previous section. For instance, in the SABRA model
\begin{align}
\begin{split}
&B_{1,1}(u,v)=ak_{2} [-u_{2,2}  v_{3,1} +u_{2,1}  v_{3,2}]
\\
&B_{1,2}(u,v)=-a k_{2} u_{2} \cdot v_{3}
\end{split}
&\\
\begin{split}
&B_{2,1}(u,v)=ak_{3} [-u_{3,2}  v_{4,1} +u_{3,1}  v_{4,2}]
           +b k_{2} [-u_{1,2} v_{3,1}+u_{1,1} v_{3,2}]
\\
&B_{2,2}(u,v)=-a k_{3} u_{3}\cdot  v_{4} -b k_{2}  u_{1} \cdot
v_{3}
\end{split}
&\\
\intertext{and for $n >2$}
\begin{split}
&B_{n,1}(u,v)= \; ak_{n+1} [-u_{n+1,2}  v_{n+2,1} +u_{n+1,1}
v_{n+2,2}]
\\&\qquad\qquad\qquad   +b k_{n} [-u_{n-1,2} v_{n+1,1}+u_{n-1,1} v_{n+1,2}]
\\&\qquad\qquad\qquad           +a k_{n-1}[u_{n-1,2} v_{n-2,1}+u_{n-1,1} v_{n-2,2}]
\\&\qquad\qquad\qquad           +b k_{n-1}[u_{n-2,2} v_{n-1,1}+u_{n-2,1} v_{n-1,2}],
\end{split}
&\\
\begin{split}
&B_{n,2}(u,v)=-a k_{n+1} [u_{n+1,1}  v_{n+2,1} +u_{n+1,2}
v_{n+2,2}]
\\&\qquad\qquad\qquad    -b k_{n}  [u_{n-1,1} v_{n+1,1}+u_{n-1,2} v_{n+1,2}]
\\&\qquad\qquad\qquad    -a k_{n-1} [u_{n-1,1} v_{n-2,1}-u_{n-1,2} v_{n-2,2}]
\\&\qquad\qquad\qquad    -b k_{n-1} [u_{n-2,1} v_{n-1,1}-u_{n-2,2} v_{n-1,2}].
\end{split}&
\end{align}
It is proved in \cite{HB+BF-12} that the maps $\bb$ for  GOY and
SABRA shell models defined as above satisfy  Assumption \ref{B}.

Let $\mathfrak{P}=(\Omega, \mathcal{F}, \mathbb{P}, \mathbb{F})$
be a filtered complete probability space such that the filtration
 $\mathbb{F}=(\mathcal{F}_t)_{t\ge0}$ satisfies the usual condition.

Let $\eta:=\{\eta_1, \eta_2,\ldots \}$ be a family of mutually
independent Poisson random measures defined on $\mathfrak{P}$ with
L\'evy measures $\{\nu_1, \nu_2,\ldots \}$. We assume that each
$\nu_j$ is a $\sigma$-finite measure on
$\mathbb{R}_0:=\mathbb{R}\backslash\{0\}$. We denote by
$\{\nu_1(dz_1)dt, \nu_2(dz)dt,\ldots \}$ the family of
compensators of the elements of $\eta$ and $\{\tilde{\eta_1},
\tilde{\eta}_2, \ldots\}$ the family of compensated Poisson random
measures associated to the elements of $\eta$. To shorten notation
we will use the following notations
$d\eta_j(z,t):=\eta_j(dz,dt)$, $d \tilde{\eta}_j(z,t):=\tilde{\eta}_j(dz,
dt)$ and $d\nu_j(z)dt:=\nu_j(dz)dt$ for any $j\in \{1,
2,\ldots\}$. We will also use the notation
$$d\bar{\eta}_j(z,t)=\mathds{1}_{\lvert z_j\rvert\le 1} d\tilde{\eta}_j(z,t)+
\mathds{1}_{\lvert z_j\rvert>1} d\eta_j(z,t),$$ for any $j\in \{1,
2,\ldots\}.$
\begin{assum}\label{Lev-Meas}
\begin{enumerate}[(i)]
\item \label{LM0} The Poisson random measures $\eta_j, j \in \{1,
2,\ldots\} $ are independent and identically distributed. This means
in particular that there exists a L\'evy measure $\nu$ such that $
\nu_j(dz_j)=\nu(dz)$ for any $j\in \{1, 2,\ldots \}$.

\item \label{LMi}There exists a $C^1$ function $g: \mathbb{R}
\rightarrow [0,\infty)$ and a constant $C>0$ such that $\nu(dz)=g(z)dz$ and for any $p\ge 1$ we have $$\biggl\lvert
\frac{g^\prime(z)}{g(z)}\biggr\rvert^p\le C(1+\lvert
z\rvert^{-p}), \,\, z\in \mathbb{R}_0.$$

\item \label{LMii} As $\lvert z\rvert \rightarrow \infty$ we have
$z^2 g(z)\rightarrow 0$. Also
$$ \int_{\mathbb{R}_0} \Big[\lvert z\rvert^{q} \mathds{1}_{(\lvert
z\rvert\le 1)} + \lvert z\rvert^{q} \mathds{1}_{(\lvert z\rvert>1
)}\Big]\nu (dz)<\infty,
$$ for any $q\ge 1$.

\item \label{LMiii}Furthermore, there exists a constant $\alpha>0$
such that for any  $y\in \mathbb{R}$
$$ \liminf_{\eps\searrow 0}\eps^\alpha \int_{\mathbb{R}_0} (\lvert
z\cdot y/\eps\rvert^2\wedge 1)\nu(dz)>0.$$
\end{enumerate}
\end{assum}
\begin{Rem}
Assumption \ref{Lev-Meas}\eqref{LMii}-Assumption
\ref{Lev-Meas}\eqref{LMiii} are very similar to \cite[Assumption
1]{Takeuchi} and ensure the validity of a Bismut-Elworthy-Li
formula that we will prove in Appendix \ref{SEC-BEL}. Assumption
\ref{Lev-Meas}\eqref{LMiii} is called in some literature the order and non-degeneracy condition (see \cite[Remark 2.2]{Takeuchi}).
\end{Rem}
The following concept plays an essential role in the proof of our
main result. The following definitions is taken from \cite{Simon}.
\begin{Defs}
\begin{enumerate}[(1)]
\item A real-valued \levy process $\{l(t); t\ge0\}$ has the small
deviation property if for any $T>0$ and $\eps>0$,
\begin{equation*}
\mathbb{P}(\sup_{t\in [0,T]}\lvert l(t)\rvert< \eps)>0.
\end{equation*}
\item A \levy measure $\rho$ on $\mathbb{R}_0$ is said of
\textit{type (I)}  if
\begin{equation*}
\int_{-1}^1 \lvert z\rvert \rho(dz)<\infty.
\end{equation*}

\item A real-valued pure jump \levy process is a \levy process
without continuous part.
\end{enumerate}
\end{Defs}
We recall the characterization of the small deviation property for
real-valued pure jump \levy processes in the following proposition
(see \cite[Th\'eor\`eme, pp 157]{Simon}, \cite[Proposition
1.1]{Dereich}).
\begin{prop}\label{small-dev}
A real-valued pure jump \levy process $\{l(t); t\ge0\}$  admits
the small deviation property if its \levy measure $\rho$ is not of
type (I) or it is of type (I) and, for $\mathcal{E}=-\int_{\lvert
z\rvert\le 1} z\rho(dz)$, we have
\begin{itemize}
\item $\mathcal{E}=0$, or

\item $\mathcal{E}>0$ and $\rho(-\eps\le z<0)\neq 0$ for all
$\eps>0$, or

\item $\mathcal{E}<0$ and $\rho(0< z\le \eps)\neq 0$ for all
$\eps>0$.
\end{itemize}
\end{prop}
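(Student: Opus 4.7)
The plan is to begin with the L\'evy-It\^o decomposition of $l$ and reduce the sup-norm small deviation to a question about jumps of absolute size at most $1$. Write $l(t)=J_s(t)+J_\ell(t)$ where $J_\ell$ collects the jumps of size greater than $1$ and $J_s$ collects the small-jump part together with the linear drift inherited from the characteristic triplet. Since $\rho(\{|z|>1\})<\infty$, the processes $J_s$ and $J_\ell$ are independent, $J_\ell$ is compound Poisson, and the event $\{J_\ell\equiv 0\text{ on }[0,T]\}$ has probability $e^{-T\rho(\{|z|>1\})}>0$. Conditional on this event, the small deviation property for $l$ reduces to $\mathbb{P}(\sup_{t\le T}|J_s(t)|<\eps)>0$.

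If $\rho$ is not of type (I), then $J_s$ is an infinite-variation compensated martingale with finite bracket $t\int_{|z|\le 1}z^2\rho(dz)$. I would bound its characteristic function from below using the truncated L\'evy symbol and invoke a standard Fourier-based small-ball estimate to conclude that $\mathbb{P}(\sup_{t\le T}|J_s(t)|<\eps)>0$. An equivalent route is a Girsanov-type change of measure that symmetrises the small jumps followed by Doob's $L^2$-inequality on the resulting martingale; the compensating drift produced by Girsanov is absorbed because the Radon-Nikodym density is finite.

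If $\rho$ is of type (I), the jumps are absolutely summable and one can write pathwise $J_s(t)=\mathcal{E} t+\sum_{0<s\le t}\Delta l(s)\mathds{1}_{|\Delta l(s)|\le 1}$ with $\mathcal{E}=-\int_{|z|\le 1} z\,\rho(dz)$. Choose $\delta>0$ such that $T\int_{|z|\le\delta}|z|\,\rho(dz)<\eps/4$; on the positive-probability event that no jump of size in $(\delta,1]$ occurs in $[0,T]$ the deviation of $J_s$ from $\mathcal{E} t$ is at most $\eps/4$ uniformly. When $\mathcal{E}=0$ this already closes the argument. When $\mathcal{E}>0$ (the case $\mathcal{E}<0$ being symmetric) one needs negative jumps that cancel the positive drift inside the $\eps$-tube, and this is precisely what the hypothesis $\rho((-\eps,0))>0$ for every $\eps>0$ supplies.

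The main obstacle is this last subcase, where one must exhibit an event of positive probability on which the opposite-sign small jumps arrive at approximately the right rate to keep $\mathcal{E} t+J_{\le\delta}(t)$ inside the $\eps$-tube throughout $[0,T]$. I would handle it by partitioning $[0,T]$ into subintervals of length $\eps/(4|\mathcal{E}|)$ and on each subinterval using the non-degeneracy hypothesis to force at least one compensating opposite-sign jump; independence of the jumps across disjoint intervals then gives the required positive probability. In the actual write-up we would simply appeal to the Th\'eor\`eme in \cite{Simon} (and \cite[Proposition 1.1]{Dereich}) rather than reproduce this inductive construction in full.
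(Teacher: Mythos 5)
The paper offers no proof of this proposition at all: it is recalled verbatim from \cite[Th\'eor\`eme, p.~157]{Simon} and \cite[Proposition 1.1]{Dereich}, so your closing remark --- that the actual write-up would simply appeal to those references --- is exactly what the authors do, and for the purposes of the paper that is all that is required.

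Since you nevertheless sketch a standalone argument, you should be aware that it has a genuine gap in the case where $\rho$ is \emph{not} of type (I), which is the hardest part of Simon's theorem. A lower bound on the characteristic function controls the law of $l(t)$ at a fixed time, not the supremum over $[0,T]$, so a ``Fourier-based small-ball estimate'' does not by itself yield $\mathbb{P}(\sup_{t\le T}|J_s(t)|<\eps)>0$. Doob's $L^2$ inequality gives $\mathbb{P}(\sup_{t\le T}|M(t)|\ge \eps)\le \eps^{-2}\,T\int_{|z|\le 1}z^2\rho(dz)$, which is vacuous for fixed $T$ and small $\eps$; to shrink the bracket you must discard the jumps with $\delta<|z|\le 1$, and conditioning on none of them occurring reintroduces the deterministic compensator drift $-t\int_{\delta<|z|\le 1}z\,\rho(dz)$, which in the infinite-variation case need not remain bounded (let alone become small) as $\delta\searrow 0$. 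Cancelling that drift by exploiting the infinitely many suitably signed small jumps is precisely the content of the non-type-(I) half of the theorem, and it is the step your sketch omits. The Girsanov route is also not obviously available: an equivalent change of L\'evy measure requires a Hellinger-type integrability condition on the density, which a strongly asymmetric infinite-variation $\rho$ will in general violate, so one cannot always ``symmetrise the small jumps'' by an absolutely continuous change of measure. By contrast, your treatment of the type-(I) case --- kill the jumps in $(\delta,1]$ on a positive-probability event, control the total variation of the jumps below $\delta$ by choosing $\delta$ small, and use the accumulation of the support of $\rho$ at $0$ from the appropriate side to cancel the drift $\mathcal{E}t$ interval by interval --- is the standard argument and is sound modulo routine details.
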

\begin{Def}
If a \levy measure $\rho$ on $\mathbb{R}_0$ satisfies the
characterizations given in Proposition \ref{small-dev}, then we
will say that it satisfies the small deviation property condition.
\end{Def}
 Now we introduce an additional assumption for the \levy measure
$\nu$ given in Assumption \ref{Lev-Meas}.
\begin{assum}\label{IID}
The L\'evy measure $\nu$ satisfies the small deviation property
condition (see Proposition \ref{small-dev}).
\end{assum}

Before we state the final assumption for the paper we give some basic examples that satisfy
Assumption \ref{Lev-Meas} and Assumption \ref{IID}.
\begin{Rem}
 \begin{enumerate}
 \item Let $c_+, C_-, \beta_+, \beta_->0$ and $\alpha \in [0,1)$. Define the general tempered \levy measure
  $$ \nu(dz)= c_{+} \lvert z\rvert^{-1-\alpha} e^{-\beta_{+}\lvert z\rvert}\mathds{1}_{z>0}+c_{-} \lvert z\rvert^{-1} e^{-\beta_{-}\lvert z\rvert^{-1}}\mathds{1}_{z<0}.$$
  The simplest choice for the parameters  $c_+, C_-, \beta_+, \beta_->0$ for $\nu$ to satisfy Assumption \ref{Lev-Meas} and Assumption \ref{IID} is
  $$c_+=c_-, \beta_+=\beta_-, \text{ and } \alpha \in [0,1).$$ This choice corresponds to a symmetric tempered stable \levy measure.  This claim can be checked by elementary arguments. With the help of a good software (Mathematica for instance) one can also play with $c_+, C_-, \beta_+, \beta_->0$ and give other choices which are more complicated than the one above. We leave this as an exercise for the interested reader.
 \item The components of the noise in \eqref{Hydro} can be replaced with the following ones
 \begin{equation}
 \ell_k(t)=\sigma W_k(G_k(t))+\theta G_k(t),\quad \sigma>0, \theta \in \mathbb{R}, t\in [0,\infty), k\in \mathbb{N},
 \end{equation}
 where $\{W_k; k\in \mathbb{N}\}$ is a family of i.i.d standard Brownian motions and
 $\{G_k; k\in \mathbb{N}\}$ is a family of i.i.d Gamma processes with \levy measure
 $\nu_G(dz)=(\vartheta z)^{-1} e^{-\frac z\vartheta} \mathds{1}_{z>0} dz$, $\vartheta>0$.
 In fact it was shown in \cite[Chapter 10]{Hilber et al} that each $\ell_k$ is a pure jump \levy noise which is identical in law to a
  variance Gamma process $\tilde{\ell}_k$ having a \levy measure
 $$ \nu(dz)=\left( c \lvert z\rvert^{-1} e^{-\beta_{+}\lvert z\rvert}\mathds{1}_{z>0}+c \lvert z\rvert^{-1} e^{-\beta_{-}\lvert z\rvert}\mathds{1}_{z<0}\right) dz,$$
 with $c=\frac1\vartheta,\,\, \beta_{+}= 2c/(\sqrt{2\sigma^2/\vartheta +\theta^2}+\theta), \beta_{-}= 2c/(\sqrt{2\sigma^2/\vartheta +\theta^2}-\theta).$ If we take $\theta=0$, then we are in the situation of symmetric tempered stable process with $\alpha=0$. We can also play with the parameters $\vartheta$, $\sigma$ and $\theta$ to give other examples, but we again leave it for the interested reader.
  \end{enumerate}
 \end{Rem}

The final assumption on our model is the following.
\begin{assum}\label{Beta}
\item Let $\{\beta_j; j=1, 2, \ldots\}$ be a family of positive
numbers such that there exists $\theta\in (\frac14,\frac12)$ and
$$ \beta_j=\lambda_j^{-\theta}.$$
\end{assum}
To close this section we also introduce the following additional
notations. For a Banach space $\mathrm{B}$ we denote respectively
by $B_b(\mathrm{B})$, $C_b(\mathrm{B})$, and $C^2_b(\mathrm{B})$
the space of bounded and measurable functions, the space of
continuous and bounded functions, and the space of bounded and
twice Fr\'echet differentiable functions on $\mathrm{B}$ and
taking values in $\mathbb{R}$. For two Banach spaces
$\mathrm{B}_1$ and $\mathrm{B}_2$ we denote by
$C^2_b(\mathrm{B}_1, \mathrm{B}_2)$ the space of bounded and
twice Fr\'echet differentiable functions on $\mathrm{B}_1$ and
taking values in $\mathrm{B}_2.$  Throughout this paper, $ B_{\mathrm{B}}(x,r)$ is the ball of radius $r$ centered at $x \in \mathrm{B}$; when $x=0$ we simply write $ B_{\mathrm{B}}(r)$.
\section{Ergodicity of the stochastic Shell models}\label{ERG-SHELL}
 The aim of this
paper is to study the uniqueness of the invariant measure
associated to the solution of the
 abstract evolution equation given by
 \begin{equation}
 \begin{split}\label{Hydro}
  & d\bu(t)+[\kappa \rA\bu(t)+\bb(\bu(t),\bu(t))]dt= \sum_{k=1}^\infty \int_{\mathbb{R}_0} \beta_k z e_k\;d\bar{\eta}_k(z,t)  \\
  & \bu(0)=\xi,
 \end{split}
 \end{equation}
where $\kappa$ is a positive constant and the Poisson random
measures $\eta_j, \;\; j \in \{1,2,\ldots\}$ are as above. In what follows we set $$
L(t)=\sum_{k=1}^\infty \int_0^t \int_{\mathbb{R}_0} \beta_k z e_k\;
d\bar{\eta}_k(z,s)  .$$

We first introduce the notion of solution and give the conditions
under which a solution $\bu$ to Eq. \eqref{Hydro} exists.
\begin{Def}
Let $T>0$ be an arbitrary real number. An $\mathbb{F}$-adapted process $\bu$ is called a solution of Eq. \eqref{Hydro}
 if the following conditions are satisfied:
 \begin{enumerate}[(i)]
  \item $\bu \in L^2(0,T; \h) $ $\mathbb{P}$-almost surely,
  \item the following equality holds for every $t\in [0,T]$ and $\mathbb{P}$-a.s,
  \begin{equation}
   (\bu(t),\phi)=(\xi,\phi)-\int_0^t\left(\langle\kappa \rA\bu(s)+\bb(\bu(s), \bu(s)),\phi\rangle \right)ds+
   \langle  L(t), \phi \rangle,
  \end{equation}
for any $\phi\in \ve$.
\end{enumerate}
\end{Def}
\begin{Rem}
Owing to Assumption \ref{B}-\eqref{B-a} the nonlinear term $ \int_0^t \langle \bb(\bu(s), \bu(s)),\phi\rangle ds$ makes sense whenever $\phi \in \ve$ and $\bu \in L^2(0,T;\h)$. Also, thanks to Assumption \ref{Beta} and \cite[Theorem 4.40]{SP+JZ} it
can be easily checked that the Levy process $L$ lives in
$D(A^{-\frac12})$. Thus $\langle L(t),  \phi \rangle$ makes sense
for any $\phi\in \ve$.
\end{Rem}

\subsection{Resolvability of problem \eqref{Hydro}} We state and
prove the following proposition.
\begin{prop}\label{SHELL}
In addition to Assumption \ref{A}, Assumption \ref{B} and
Assumption \ref{Beta} we assume that the items \eqref{LM0} and
\eqref{LMii} of Assumption \ref{Lev-Meas} hold. Then, problem
\eqref{Hydro} has a unique solution $\bu$ which has a \cadlag
modification in $\h$. For any $t\ge 0$, $\xi\in \h$ and $p\in \{2,4\}$ there
exists a constant $C:=C(t,\xi)>0$ such that
\begin{equation}\label{Eq-9}
\EE \sup_{s\in [0,t]}\lvert \bu(s,\xi)\rvert^{p} +\kappa \EE \int_0^t \lvert \bu(s,\xi)\rvert^{p-2} \lvert
\mathrm{A}^\frac12 \bu(s, \xi)\rvert^2 ds\le C.
\end{equation}
Moreover, $\bu$ is a Markov process having
the Feller property.
\end{prop}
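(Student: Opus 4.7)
The plan is to use the classical reduction to a random PDE via the stochastic convolution. Let $\mathfrak{S}$ be the mild solution of the linear Ornstein--Uhlenbeck type problem
\begin{equation*}
d\mathfrak{S}(t) + \kappa \rA \mathfrak{S}(t)\,dt = dL(t),\qquad \mathfrak{S}(0)=0.
\end{equation*}
First I would establish the existence of $\mathfrak{S}$ together with a c\`adl\`ag modification taking values in $\h$ and, in fact, in $D(\rA^{\alpha/2})$ for every $\alpha<\theta$. This combines Assumption \ref{Beta} (so that $\sum_k \beta_k^2 \lambda_k^{2\alpha}<\infty$ for such $\alpha$), Assumption \ref{Lev-Meas}\eqref{LMii} (which gives arbitrary moments of the L\'evy measure and square integrability of $L$ in $D(\rA^{-\theta+\eps})$), and the time-regularity results for Hilbert-space valued L\'evy stochastic convolutions proved in \cite{SP+JZ-13} (see also \cite[Theorem 4.40]{SP+JZ}). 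As a byproduct, $\EE \sup_{s\in[0,t]}\lvert \mathfrak{S}(s)\rvert^{p}<\infty$ for every $p\ge 1$ and $t>0$.

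Next, I would set $\bv:=\bu-\mathfrak{S}$ and study the random, pathwise deterministic PDE
\begin{equation*}
\frac{d\bv}{dt} + \kappa \rA \bv + \bb(\bv+\mathfrak{S},\bv+\mathfrak{S}) = 0,\qquad \bv(0)=\xi.
\end{equation*}
Construction proceeds via a Galerkin approximation $\bv_n \in \Span\{e_1,\ldots,e_n\}$. Taking the $\h$-scalar product with $\bv_n$ and expanding the nonlinearity, the contributions $\bb(\bv_n,\bv_n)$ and $\bb(P_n\mathfrak{S},\bv_n)$ tested against $\bv_n$ vanish by Assumption \ref{B}\eqref{B-c}, while the remaining cross terms are controlled via Assumption \ref{B}\eqref{B-a}--\eqref{B-b} and Young's inequality. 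This produces a differential inequality of the form
\begin{equation*}
\frac{d}{dt}\lvert \bv_n\rvert^2 + \kappa \Lve \bv_n\Rve^2 \le C_\kappa \lvert \mathfrak{S}\rvert^2 \lvert \bv_n\rvert^2 + C_\kappa \lvert \mathfrak{S}\rvert^4,
\end{equation*}
which yields, by Gronwall and the moments of $\mathfrak{S}$, uniform-in-$n$ bounds in $L^\infty(0,T;\h)\cap L^2(0,T;\ve)$, and the analogous $p=4$ bound by testing against $\lvert\bv_n\rvert^2 \bv_n$. A standard weak compactness argument (Banach--Alaoglu plus identification of the quadratic nonlinearity through the estimate in Assumption \ref{B}\eqref{B-a}) provides a limit $\bv\in C([0,T];\h)\cap L^2(0,T;\ve)$. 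Then $\bu:=\bv+\mathfrak{S}$ is a solution of \eqref{Hydro}, is c\`adl\`ag in $\h$ (continuous from $\bv$, c\`adl\`ag from $\mathfrak{S}$), and \eqref{Eq-9} follows from the estimates above.

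Pathwise uniqueness is obtained by testing the difference $\bw=\bu_1-\bu_2$ of two solutions with $\bw$: since $\bb(\bu_1,\bu_1)-\bb(\bu_2,\bu_2)=\bb(\bw,\bu_1)+\bb(\bu_2,\bw)$ and $\langle \bb(\bu_2,\bw),\bw\rangle=0$, Assumption \ref{B}\eqref{B-b} combined with Young delivers a Gronwall inequality that forces $\bw\equiv 0$. The Markov property then follows from pathwise uniqueness together with the time-homogeneity and increment independence of $L$. For the Feller property, refining the uniqueness calculation I would show that $\xi\mapsto \bu(t,\xi)$ is continuous from $\h$ into $L^p(\Omega;\h)$ for every $p$, and then dominated convergence gives $P_t f\in C_b(\h)$ whenever $f\in C_b(\h)$.

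The main obstacle is the sample path regularity of the stochastic convolution at the threshold $\theta\in(1/4,1/2)$: the driving noise has infinite variation in $\h$ and only lies in negative order spaces, so obtaining a genuine c\`adl\`ag version in $\h$ rests on a sharp maximal inequality for Hilbert-space valued L\'evy stochastic convolutions; once this c\`adl\`ag property of $\mathfrak{S}$ and its moment bounds are secured, the rest of the argument is a pathwise deterministic exercise driven by the energy cancellation \eqref{B-c} for $\bb$.
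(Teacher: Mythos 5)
Your overall architecture (stochastic convolution $\mathfrak{S}$, random PDE for $\bv=\bu-\mathfrak{S}$, Galerkin, pathwise uniqueness, Whitt's continuity of addition for the c\`adl\`ag property) matches what the paper does for \emph{existence, uniqueness and the c\`adl\`ag modification}. But there is a genuine gap in how you propose to obtain the moment estimate \eqref{Eq-9}, and it is precisely the step where the paper takes a different route.

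Your energy inequality for the Galerkin approximation of the random PDE,
\begin{equation*}
\frac{d}{dt}\lvert \bv_n\rvert^2 + \kappa \Lve \bv_n\Rve^2 \le C_\kappa \lvert \mathfrak{S}\rvert^2 \lvert \bv_n\rvert^2 + C_\kappa \lvert \mathfrak{S}\rvert^4,
\end{equation*}
yields after Gronwall a bound of the form
\begin{equation*}
\sup_{s\le t}\lvert \bv_n(s)\rvert^2 \le \Bigl(\lvert\xi\rvert^2 + C\!\int_0^t \lvert\mathfrak{S}(s)\rvert^4\,ds\Bigr)\exp\Bigl(C\!\int_0^t\lvert\mathfrak{S}(s)\rvert^2\,ds\Bigr).
\end{equation*}
To convert this into $\EE\sup_{s\le t}\lvert\bu(s)\rvert^p\le C$ you must take expectations of an exponential of $\int_0^t\lvert\mathfrak{S}\rvert^2\,ds$. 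Assumption \ref{Lev-Meas}\eqref{LMii} only guarantees \emph{polynomial} moments of the L\'evy measure; for the paper's own model examples (tempered stable, variance Gamma) the jump sizes have merely exponential tails, so $\EE\exp\bigl(c\,l_k(t)^2\bigr)=\infty$ for every $c>0$ and the required exponential moment of $\int_0^t\lvert\mathfrak{S}\rvert^2\,ds$ is infinite. So \eqref{Eq-9} does not follow "from the estimates above". (The same caveat applies to your claim of $L^p(\Omega;\h)$-continuity in the initial datum for the Feller property: the continuity constant is again exponential in the noise, so you should settle for continuity in probability or localize.) This is why the paper proves \eqref{Eq-9} by applying It\^o's formula to $\lvert\bun(t)\rvert^p$ for the \emph{stochastic} Galerkin system directly: there the cancellation $\langle\bb(\bun,\bun),\bun\rangle=0$ removes the nonlinearity entirely, the noise enters only linearly through the jump terms, and after splitting into the compensated small-jump martingale, the large-jump sum and the compensator correction, each piece is controlled by Burkholder--Davis--Gundy and the finiteness of $\int\lvert z\rvert^q\nu(dz)$ and $\sum_k\beta_k^r$, closing with a Gronwall inequality whose constants are deterministic. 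The decomposition $\bu=\bv+\mathfrak{S}$ is used in the paper only for the c\`adl\`ag modification (and, restricted to the event where $\sup_s\lvert\mathfrak{S}(s)\rvert$ is small, for accessibility), never for the moment bounds. You should reorganize your proof accordingly: derive \eqref{Eq-9} on the stochastic Galerkin system first, then use the $\bv+\mathfrak{S}$ splitting for path regularity.
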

\begin{proof}
First we will prove that \eqref{Eq-9} holds for the Galerkin
approximation of \eqref{Hydro}.

For each $n\in \mathbb{N}$ let $$ \h_n:=\gal\{e_1,\ldots, e_n\},$$
  and $\Pi_n: \ve^\ast \rightarrow \h_n$ be the orthogonal
 projection defined by $$\Pi_n \bv:= \sum_{k=1} \langle \bv, e_k\rangle e_k, \text{ for any } \bv \in \ve^\ast.$$
 Throughout this paper, we will identify $\h_n$ \del{and $\ve_n$ $\ve^\ast_n$} with
 $\mathbb{R}^n$.

Owing to \cite[Theorem 3.1]{Albeverio}, for each $n\in \mathbb{N}$
there exists a \cadlag process $\bun$ which solves the system of
stochastic differential equations
\begin{equation*}
d\bun+[\kappa \mathrm{A}\bun(t)+\Pi_n
\bb(\bun(t),\bun(t))]dt=\sum_{k=1}^n \beta_k dl_k(t)e_k, \,\,
\bun(0)=\Pi_n \xi,
\end{equation*}
where
$$ dl_k(t)=\int_{\lvert z\rvert<1} z
\tilde{\eta}_k(dz,dt)+\int_{\lve z\rve\ge 1} z  \eta_k(dz,dt)=:
\int_{\mathbb{R}_0} zd\bar{\eta}_k(z,t).$$ Applying It\^o's formula to
$\lvert \bun(t)\rve^p$ and using Assumption \ref{B}-\eqref{B-c}
yield
\begin{equation*}
\begin{split}
d \lve \bun(t)\rve^p +p \kappa \lve \mathrm{A}^\frac12
\bun(t)\rve^2 \lve \bun(t)\rve^{p-2}dt = dI_1(t)+
dI_2(t)+I_3(t)dt,
\end{split}
\end{equation*}
where
\begin{align*}
dI_1(t):=\sum_{k=1}^n \int_{\lve z\rve<1} (\lve \bun(t)+\beta_k z
e_k\rve^p-\lve \bun(t)\rve^p)d\tilde{\eta}_k(z,t),\\
dI_2(t):=\sum_{k=1}^n \int_{\lve z\rve\ge 1} (\lve \bun(t)+\beta_k z
e_k\rve^p-\lve \bun(t)\rve^p)d\eta_k(z,t),\\
I_3(t):=\sum_{k=1}^n \int_{\lvert z\rvert<1}(\lve \bun(t)+\beta_k z
e_k\rve^p-\lve \bun(t)\rve^p -\Psi_{ \bun, t}[\beta_kz
e_k] )\nu(dz)dt,
\end{align*}
and $\Psi_{\bun, t}[h]= p \lvert \bun(t)\rvert^{p-2}\langle \bun(t), h\rangle$ for any $h\in \h$ and $t\ge 0$.
Since $\eta_k(dz,dt)$, $k=1,2,\ldots$ are non-negative measures
and
\begin{equation*}
\begin{split}
\lve \bun(t)+\beta_k z e_k\rve^p-\lve \bun(t)\rve^p\le
\beta_k^p \lvert z\rvert^p+C_p \sum_{r=1}^{p-1} \beta_k^{r} \lvert
z\rve^{r} \lve \bun(t)\rve^{p-r}\\
\le \beta_k^p \lvert z\rvert^p+C_p \sum_{r=1}^{p-1} \beta_k^{r} \lvert
z\rve^{r} (1+ \lvert \bun(t)\rvert^p)
\end{split}
\end{equation*}
we have
\begin{equation*}
\begin{split}
\lvert I_2(t)\rvert \le C_p \sum_{k=1}^n \biggl( \int_0^t
\int_{\lve z\rve\ge 1} \left( \sum_{r=1}^{p} \beta_k^{r} \lvert
z\rve^{r}\right) d\eta_k(z,s) + \int_0^t
\int_{\lve z\rve\ge 1}\left( C_p \sum_{r=1}^{p-1} \beta_k^{r} \lvert
z\rve^{r}
\lve \bun(s) \rve^p\right ) d\eta_k(z,s)\biggr).
\end{split}
\end{equation*}
Hence,
\begin{equation*}
\begin{split}
\EE\sup_{s\in [0,t]}\lve I_2(s)\rve&\le C_p \sum_{k=1}^n \EE \biggl( \int_0^t
\int_{\lve z\rve\ge 1} \left( \sum_{r=1}^{p} \beta_k^{r} \lvert
z\rve^{r}\right) d\eta_k(z,s)\\ & \quad + \int_0^t
\int_{\lve z\rve\ge 1}\left( C_p \sum_{r=1}^{p-1} \beta_k^{r} \lvert
z\rve^{r}
\lve \bun(s) \rve^p\right ) d\eta_k(z,s)\biggr) \\
& \le t C_p C_\nu \sum_{r=1}^ p\Big[\sum_{k=1}^\infty \beta_k^r\Big]+ C_p C_\nu \sum_{r=1}^ p\Big[\sum_{k=1}^\infty \beta_k^r\Big] \times
\EE \int_0^t \lvert \bun(s)\rvert^{p}ds
,
\end{split}
\end{equation*}
where for any $q\ge 1$ we have set $C_\nu= \max_{q\ge 1}\Big[\int_{\mathbb{R}_0}\mathds{1}_{(\lvert z\rvert<1)} \lvert z\rvert^{q} \nu(dz) +
\int_{\mathbb{R}_0}\mathds{1}_{(\lvert z\rvert\ge1)} \lvert z\rvert^q \nu(dz)\Big].$
Since  $C_\beta:= \sum_{k=1}^\infty \beta_k^\alpha <\infty$, $C_\nu<\infty$ for any
$\alpha>0$ and $q\ge 1$, respectively, we derive that there exists a constant $C>0$ such that
\begin{equation*}
\EE\sup_{s\in [0,t]}\lve I_2(s)\rve\le C\,t + C\,  \EE\int_0^t \lve \bun(s)\rve^p ds.
\end{equation*}
Using Burkholder-Davis-Gundy inequality and similar idea  as above, we deduce that
\begin{align}
\EE\sup_{s\in [0,t]}\lve I_1(s)\rve&\le c_p \sum_{k=1}^n \EE
\biggl[\int_0^t \int_{\lve z\rve<1} \left(\lve \bun(t)+\beta_k z
e_k\rve^p-\lve \bun(t)\rve^p\right)^2\nu(dz)dt\biggr]^\frac12\nonumber \\
&\le C_p \sum_{k=1}^n \EE \biggl[\int_0^t \int_{\lve z\rve<1}
\left(\beta_k^p \lvert z\rvert^p+ \beta_k \lvert
z\rve \lvert \bun(t)\rvert^{p-1}+\sum_{r=1}^{p-2} \beta_k^{r} \lvert
z\rve^{r} \lvert \bun(t)\rvert^{p-r}\right)^2\nu(dz)dt\biggr]^\frac12\nonumber\\
& \le  C_p \sum_{k=1}^n \EE \biggl[\int_0^t \int_{\lve z\rve<1}
\left(\sum_{r=1}^p \beta_k^{2r} \lvert z\rvert^{2r}+ \sum_{r=1}^{p-1}\beta_k^{2r} \lvert
z\rve^{2r} \lve \bun(s)\rve^{2(p-1)}\right)\nu(dz)ds\biggr]^\frac12\nonumber\\
&\le C_p C_\beta C_\nu t\left(1 +\EE\sup_{s\in [0,t]}\lve \bun(s)\rve^{p-1}\right). \label{YOUNG}
\end{align}
where we understand that $\sum_{r=2}^{p-1} \beta_k^{r} \lvert
z\rve^{r} \lvert \bun(t)\rvert^{p-r}=0$ if $p=2.$
Recall that for any real numbers $a\ge 0$ and $b\ge 0$ we have  $$ ab\le \frac{a^p}{p \eps^p}+ \frac{p-1}{p} (b\eps)^{\frac{p}{p-1}},$$ for any $\eps>0$. We deduce from this and the inequality \eqref{YOUNG} that there exists $C>0$ such that
$$ \EE\sup_{s\in [0,t]}\lve I_1(s)\rve\le  C\, t +\frac12 \EE\sup_{s\in [0,t]}\lve \bun(s)\rve^p. $$
Now we deal with $I_3(t)$. As above, it is easy to see that 
\begin{equation*}
\begin{split}
\lve \lve \bun(t)+\beta_k z e_k\rve^p-\lve \bun(t)\rve^p -\Psi_{\bun, t}[\beta_kz e_k] \rve\le C_p\left(\beta_k^p \lvert z\rvert^p+ \sum_{r=1}^{p-1} \beta_k^{r} \lvert
z\rve^{r} \lve \bun(t)\rve^{p-r}+
p\lve \bun\rve^{p-1}\beta_k \lve z\rve \right).
\end{split}
\end{equation*}
Thus, using Young's inequality we easily deduce
that
\begin{align*}
\lve \lve \bun(t)+\beta_k z e_k\rve^p-\lve \bun(t)\rve^p -\Psi_{\bun, t}[\beta_kz e_k] \rve\le C_p\left(\sum_{r=1}^p \beta_k \lve z\rve^r + \sum_{r=1}^{p-1} \beta_k^r \lve z\rve^r \lve \bun(t) \rve^p\right).
\end{align*}
Hence, arguing as in the case of $I_2$ we deduce that there exists a constant $C>0$ such that
\begin{equation*}
\EE\sup_{s\in [0,t]}\biggl\lve \int_0^s I_3(r)dr\biggr\rve\le C\, t + C\, \EE\int_0^t \lve \bun(s)\rve^p
ds.
\end{equation*}
Summing up we have showed that there exists $C>0$ such that for
any $n\in \mathbb{N}$
\begin{equation*}
\frac12 \EE\sup_{s\in [0,t]} \lve \bun(s)\rve^p +p\kappa \EE \int_0^t \lve
\mathrm{A}^\frac12 \bun(s)\rve^2 \lve \bun(s)\rve^{p-2}ds\le \lve
\xi\rve^p+ C t+ C \EE\int_0^t \lve \bun(s)\rve^p ds.
\end{equation*}
Invoking the Gronwall's inequality we infer that there exists
$K_0$ and $K_1$  such that for any $n\in \mathbb{N}$
\begin{equation}\label{Eq-10}
\EE\sup_{s\in [0,t]}\lve \bun(s)\rve^p+2p\kappa \EE\int_0^t \lve
\bun(s)\rve^{p-2} \lve \mathrm{A}^\frac 12 \bun(s)\rve^2 ds\le
(K_0 t+\lve \xi\rve^p)e^{K_1t}.
\end{equation}
Now, the existence of solution $\bu$ will follow from a similar
argument as in \cite{EH+PR} (see also \cite{EH+PAR+MS},
\cite{Motyl}). The uniqueness of the solution can be proved by
arguing as in \cite{EH+PR} or \cite{ZB+EH+JZ}.

By Assumption \ref{A}, $ \langle \rA y,
y\rangle=\vert\rA^\frac12 y\rvert^2$ for any $y\in \ve$, thus thanks to Assumption \ref{B} we can argue as in \cite{EH+PR} and show that
for any $t\ge 0$ we have
\begin{align}
\lim_{n\to \infty} \mathbb{E}\lvert \bun(t)-\bu(t)\rvert^2=0,\label{t0-1}\\
\lim_{n \to \infty}\mathbb{E} \int_0^t \lvert \rA^\frac12[
\bun(s)-\bu(s)]\rvert^2 ds=0.\label{t0-2}
\end{align}
Now, we prove that $\bu$ has a \cadlag modification in $\h$. Our
proof relies very much on recent result about \cadlag property of
stochastic convolution proved in \cite{SP+JZ-13}. Let
$\mathfrak{S}$ be the stochastic convolution defined by
\begin{equation}\label{STOC-CONV-1}
\mathfrak{S}(t)=\sum_{k=1}^\infty \mathfrak{S}_k(t) e_k,
\end{equation}
where each $\mathfrak{S}_k$ is the solution to
\begin{equation}\label{STOC-CONV-2}
d\mathfrak{S}_k(t)=-{\kappa} \lambda_k \mathfrak{S}_k(t) dt+\beta_k
\int_{\mathbb{R}_0} z d\bar{\eta}_k(z,t).
\end{equation}
Since, by Assumption \ref{A}-\eqref{Aii} and Assumption
\ref{Beta},
$$ \sum_{k=1}^\infty\left( \lambda_k^{\eps-1} \beta_k^2
+\lambda_k^\eps \beta^4_k\right)\le C \sum_{k=1}^\infty
\left(\lambda^{-2k(\eps-[1+2\theta])}+\lambda^{(\eps-4\theta)2 k}
\right)<\infty,$$ for any $\eps \in (0, 2)$, it follows from
\cite[Corollary 3.3]{SP+JZ-13} that $\mathfrak{S}$
has a  \cadlag modification in $\h$. Let us also consider the
following problem
\begin{equation}\label{stoc+v}
\frac{d\bv(t)}{d t} +\kappa \rA \bv(t)
+\bb(\bv(t)+\mathfrak{S}(t), \bv(t)+\mathfrak{S}(t))=0,\,\,
\bv(0)=\xi\in \h,
\end{equation}
where $\mathfrak{S} \in L^\infty(0,T;\h)$. Arguing as in Appendix
\ref{APP-CONV} we can show that it has a unique solution $\bv \in
C(0,T;,\h)\cap L^2(0,T;\ve)$. Taking $\mathfrak{S}$ as the
stochastic convolution defined in
\eqref{STOC-CONV-1}-\eqref{STOC-CONV-2} we see, thanks to the
uniqueness of solution, that $\bu=\bv+\mathfrak{S}$ solves
\eqref{Hydro}. Thanks to \cite[Theorem 4.1]{Whitt} the function
$\phi: C([0,T];\h)\times \mathbb{D}([0,T];\h) \ni (x,y)  \mapsto
x+y \in \mathbb{D}([0,T];\h)$ is continuous, hence $\bu$ has a
\cadlag modification in $\h$. Here $\mathbb{D}([0,T];\h)$ denotes
the space, equipped with the Skorokhod topology $J_1$,  of \cadlag
functions taking values in $\h$.

 Arguing as in  \cite[Section 6]{AMR-09} we can show
that $\bu$ is a Markov semigroup. The idea in \cite{EH+PR} can be
used to prove that $\bu$ has the Feller property.
\end{proof}

\subsection{Uniqueness of the invariant measure for the stochastic
Shell models} The preparatory result in the previous subsection
enables us to define a Markov semigroup  which is generated by the
Markov solution $\bu$ to \eqref{Hydro}. More precisely, we can
define a Markov semigroup as in the following definition.
 \begin{Def}\label{MAP}
 Let $\{\CP_t; t\ge 0\}$ be the Markov semigroup
defined by
$$[\CP_t\Phi](\xi)=\EE[\Phi(\bu(t,\xi))],\,\, \Phi\in B_b(\h), \xi
\in \h,\,\, t\ge 0,$$ where $\bu(\cdot, \xi)$ is the unique solution to
\eqref{Hydro} with initial condition $\xi \in \h$. For simplicity
we will write $$ \CP_t\Phi(\xi):=[\CP_t\Phi](\xi),\,\, \Phi\in
B_b(\h), \xi \in \h,\,\, t\ge 0,$$ throughout.
\end{Def}
We will establish that the Markov semigroup $\{\CP_t; t\ge 0\}$
has a unique invariant measure which then implies the ergodicity of
the solution to \eqref{Hydro}.

First let us introduce an auxiliary problem. For this aim, let $R\in (0, \infty)$
and $\rho(\cdot):[0,\infty)\rightarrow
[0,1]$ be a $C^\infty$ and Lipschitz function such that
\begin{equation*}
\rho(x)=
\begin{cases}
1 \text{ if } x\in [0,1],\\
0 \text{ if } x\in [2,\infty],\\
\in [0,1] \text{ if } x\in [1,2],
\end{cases}
\end{equation*}
and $ \lvert \rho^\prime(x)\lvert\le 2$. For any $\bu \in \h$ let $\{\bb^R (\bu,\bu): n \in \mathbb{N} \}$
be the family defined by
$$\bb^R(\bu, \bu):=\rho\left(\frac{\lvert \bu\rvert^2
}{R}\right)\bb(\bu,\bu), \text{ for any } R \in \mathbb{N}.$$ Let
us consider the following modified problem
\begin{equation}\label{Hydro-1}
 \begin{split}
  & d\bu^R(t)+[\kappa \rA\bu^R(t)+\bb^R(\bu^R(t),\bu^R(t))]dt=\sum_{k=1}^\infty \beta_k dl_k(t)e_k,\\
  & \bu^R(0)=\xi \in \h.
 \end{split}
 \end{equation}
 We have the following results which will be proved in the
 next section.
 \begin{prop}\label{TSF-MSM}
Let Assumption \ref{A}, Assumption \ref{B}, Assumption \ref{Beta}
and Assumption \ref{Lev-Meas} hold. Then for each $R>0$ and
$\xi\in \h$ the problem \eqref{Hydro-1} has a unique solution
$\bu^R:=\bu^R(\cdot,\xi)$. The stochastic process $\bu^R$ is a
Markov process which has the Feller property. Furthermore, if
$\{\CP^R_t; t\ge 0\}$ denotes the Markov semigroup associated to $\bu^R$ (see
Definition \ref{MAP}) then for any $R>0$, $t> 0$ there exists a
positive constant $C:=C(t,R)$ such that
\begin{equation}\label{SF-MSM-R}
\lvert \CP^R_t \Phi(\xi)-\CP^R_t  \Phi(\zeta)\rvert< C \Lve \Phi
\Rve_\infty \lvert \xi-\zeta\rvert,
\end{equation}
for any $\xi, \zeta\in \h$ and $\Phi\in B_b(\h)$.
 \end{prop}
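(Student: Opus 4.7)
\emph{Step 1: Well-posedness, Markov and Feller property.} Because of the cut-off $\rho(|\bu|^2/R)$ and Assumption \ref{B}\eqref{B-b}, the modified drift $\bu \mapsto \bb^R(\bu,\bu)$ is globally Lipschitz from $\h$ into $\ve^\ast$ (the factor $\rho$ forces the quadratic map to stop growing outside $B_{\h}(\sqrt{2R})$, while the cubic-type estimate $\Vert \bb(\bu,\bu)-\bb(\bv,\bv)\Vert_{\ve^\ast}\le C_0(|\bu|+|\bv|)|\bu-\bv|$ makes the Lipschitz constant depend only on $R$). With this linear growth and Lipschitz structure, existence and uniqueness of a \cadlag $\h$-valued solution $\bu^R$, together with its Markov and Feller property, follow by repeating the argument of Proposition \ref{SHELL} (Galerkin + It\^o + Gronwall + passage to the limit via monotonicity); in fact everything is cleaner here because the nonlinear term is bounded. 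This handles the qualitative part of the statement.

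\emph{Step 2: Reduction of the strong Feller estimate to the Galerkin system.} Following the strategy of \cite{FF+BM}, for each $n\in\mathbb N$ I consider the finite dimensional truncated Galerkin system on $\h_n\simeq \mathbb R^n$,
\begin{equation*}
d\bu^R_n(t)+\bigl[\kappa\rA\bu^R_n(t)+\Pi_n\bb^R(\bu^R_n(t),\bu^R_n(t))\bigr]dt=\sum_{k=1}^{n}\beta_k\,dl_k(t)\,e_k,\qquad \bu^R_n(0)=\Pi_n\xi,
\end{equation*}
and let $\CP^{R,n}_t\Phi(\xi)=\EE\Phi(\bu^R_n(t,\Pi_n\xi))$ denote the associated semigroup on $B_b(\h_n)$. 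The plan is to prove a gradient bound
\begin{equation*}
\bigl|\nabla \CP^{R,n}_t\Phi(\xi)\bigr|\le C(t,R)\,\Vert\Phi\Vert_\infty,\qquad t>0,\ \xi\in\h_n,
\end{equation*}
uniformly in $n$, and then to pass to the limit $n\to\infty$.

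\emph{Step 3: The uniform gradient estimate via Bismut–Elworthy–Li.} The drift of $\bu^R_n$ is $C^1_b$ with derivative bounded independently of $n$ (the truncation kills the quadratic growth, while $\Pi_n$ and $\rA$ restricted to $\h_n$ have bounded operator norms once the truncation is inserted into the relevant energy estimates that only enter through the Galerkin projections $\Pi_n$). The diffusion coefficient is the diagonal matrix with entries $\beta_1,\ldots,\beta_n$ acting on the Lévy noise $(l_1,\ldots,l_n)$, whose components are i.i.d. with Lévy measure $\nu$ satisfying the absolute continuity, order and non-degeneracy hypotheses \ref{Lev-Meas}\eqref{LMi}–\eqref{LMiii}. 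These are exactly the assumptions required by the Bismut–Elworthy–Li type formula of Lemma \ref{BEL} and the gradient estimate proved in Appendix \ref{SEC-GRAD-EST}, which together yield a bound of the form $|\nabla \CP^{R,n}_t\Phi(\xi)|\le C(t,R)\Vert\Phi\Vert_\infty$ with $C(t,R)$ independent of $n$ and of $\xi$. Integrating along a straight segment from $\xi$ to $\zeta$ therefore gives the Lipschitz bound
\begin{equation*}
\bigl|\CP^{R,n}_t\Phi(\xi)-\CP^{R,n}_t\Phi(\zeta)\bigr|\le C(t,R)\,\Vert\Phi\Vert_\infty\,|\xi-\zeta|,\qquad \xi,\zeta\in\h_n.
\end{equation*}
The main obstacle here is that the Lévy measure on $\mathbb R^n$ is supported on the coordinate axes and has no smooth density, so the usual Bismut–Elworthy–Li arguments (e.g.\ \cite{Takeuchi}) do not apply directly; this is precisely what Lemma \ref{BEL} is designed to circumvent via the change of measure ideas imported from \cite{Ishikawa}.

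\emph{Step 4: Passage to the limit.} Let $\Phi\in C_b(\h)$. For fixed $\xi,\zeta\in\h$, the convergence results collected in Appendix \ref{APP-CONV} will allow me to show $\CP^{R,n}_t\Phi(\Pi_n\xi)\to \CP^R_t\Phi(\xi)$ (and similarly at $\zeta$); combined with the uniform Lipschitz bound from Step 3 this gives \eqref{SF-MSM-R} for every $\Phi\in C_b(\h)$. A standard monotone class / density argument (approximating $\Phi\in B_b(\h)$ by uniformly bounded sequences in $C_b(\h)$) then extends the estimate to all $\Phi\in B_b(\h)$, which is the strong Feller inequality claimed in the proposition.
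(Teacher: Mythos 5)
Your overall architecture --- well-posedness via the Lipschitz truncation, a uniform-in-$n$ gradient bound for the Galerkin semigroups through the Bismut--Elworthy--Li formula of Lemma \ref{BEL} and the estimate of Appendix \ref{SEC-GRAD-EST}, then passage to the limit $n\to\infty$ --- is exactly the paper's. Two of your justifications, however, would not go through as written. First, in Step 3 the uniformity in $n$ does \emph{not} come from the drift being ``$C^1_b$ with derivative bounded independently of $n$'': the linear part $\kappa\rA$ restricted to $\h_n$ has operator norm $\kappa\lambda_n\to\infty$, and $\nabla\bb^R_n$ is bounded uniformly in $n$ only as a map into $\ve^\ast$ (via Assumption \ref{B}-\eqref{B-a}), not into $\h_n$. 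What actually produces an $n$-independent constant is the combination of (i) the energy estimate \eqref{DER-GAL} for the linearized flow $\Burn=\nabla_x\burn$, which bounds $\EE\int_0^t\lvert\rA^{1/2}\Burn(s)\rvert^2\,ds$ uniformly in $n$ thanks to the cut-off and Assumption \ref{B}, and (ii) the weighted form of the gradient estimate \eqref{GRAD-EST} with $\delta=\tfrac12$, whose prefactor $\bigl(\sum_j\beta_j^{-2}\lambda_j^{-1}\bigr)^{1/2}$ is finite precisely because Assumption \ref{Beta} gives $\beta_j^{-2}\lambda_j^{-1}=\lambda_j^{2\theta-1}$ with $\theta<\tfrac12$. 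You point to the right lemmas but attribute the uniformity to the wrong mechanism, and without (i)--(ii) the argument does not close.

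Second, in Step 4 you assert $\CP^{R}_{t,n}\Phi(\Pi_n\xi)\to\CP^R_t\Phi(\xi)$ at the \emph{fixed} time $t$. The convergence available from Appendix \ref{APP-CONV} is only $\burn\to\bu^R$ in $L^2(0,t;\h)$ a.s. (see \eqref{CONV-GAL}), which after dominated convergence yields $\EE\Phi(\burn(s,\xi))\to\EE\Phi(\bu^R(s,\xi))$ only for almost every $s\in(0,t]$ along a subsequence. The paper closes this gap by first obtaining the Lipschitz estimate \eqref{SF-AE} for a.e.\ $s$ and then using the \cadlag regularity of $s\mapsto\bu^R(s)$, hence of $s\mapsto\CP^R_s\Phi(\xi)-\CP^R_s\Phi(\zeta)$, to extend it to every $s\in(0,t]$; some such argument is needed in your Step 4. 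The final extension from continuous to bounded measurable $\Phi$ is unproblematic (the paper invokes the equivalence lemma of \cite{SP+JZ} where you use a monotone class argument), and Steps 1--2 match the paper's Proposition \ref{MOD-SHELL} and its Galerkin reduction.
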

Now, let us state two
 propositions whose proofs will be given below. The following
 proposition shows that the Markov semigroup associated to the
 unique solution of \eqref{Hydro} has a certain smoothing
 property.
\begin{prop}\label{TRU-CUT}
Let Assumption \ref{A}, Assumption \ref{B}, Assumption \ref{Beta}
and Assumption \ref{Lev-Meas} hold. Let $\{\CP_t; t\ge0\}$, be the
Markov process associated to the solution $\bu$ of \eqref{Hydro}.
Then it has the strong Feller
property, i.e., $\CP_t \mathcal{B}_b(\h)\subset \mathcal{C}_b(\h)$ for any $t> 0$.
\end{prop}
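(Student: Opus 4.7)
The plan is to use the truncation strategy signaled in the introduction: transfer the strong Feller property of the truncated semigroup $\{\CP^R_t; t\ge 0\}$ (supplied by Proposition \ref{TSF-MSM}) to $\{\CP_t; t\ge 0\}$ by letting the cutoff radius $R$ tend to infinity, using the moment bound \eqref{Eq-9} to control the error.

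First, I would fix $t>0$, $\Phi \in B_b(\h)$, and $\xi \in \h$, and introduce the stopping time
\[
\tau^R(\xi) := \inf\{s\ge 0 : \lvert \bu(s,\xi)\rvert^2 \ge R\}, \qquad R>0.
\]
Because $\rho\equiv 1$ on $[0,1]$, the coefficients of \eqref{Hydro} and of the truncated equation \eqref{Hydro-1} coincide whenever $\lvert \bu\rvert^2 \le R$. By pathwise uniqueness for both equations (Proposition \ref{SHELL} and Proposition \ref{TSF-MSM}) one obtains $\bu(s,\xi)=\bu^R(s,\xi)$ for $s<\tau^R(\xi)$, so on the event $\{\tau^R(\xi)>t\}$ the two processes agree at time $t$. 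Consequently,
\[
\lvert \CP_t\Phi(\xi)-\CP^R_t\Phi(\xi)\rvert \le 2\lVert \Phi\rVert_\infty\, \PP\bigl(\tau^R(\xi)\le t\bigr),
\]
and the same bound holds with $\xi$ replaced by any $\zeta \in \h$.

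Next, I would control $\PP(\tau^R(\zeta)\le t)$ uniformly for $\zeta$ in a bounded neighborhood of $\xi$. By Chebyshev and the $p=4$ case of \eqref{Eq-9},
\[
\PP\bigl(\tau^R(\zeta)\le t\bigr)\le \PP\Bigl(\sup_{s\in[0,t]}\lvert \bu(s,\zeta)\rvert^2 \ge R\Bigr)\le \frac{1}{R^2}\,\EE\sup_{s\in[0,t]}\lvert \bu(s,\zeta)\rvert^{4}\le \frac{C(t,\zeta)}{R^2}.
\]
Inspection of the Gronwall step in the proof of Proposition \ref{SHELL} shows that $C(t,\zeta)$ depends on $\zeta$ only through $\lvert \zeta\rvert$ in a monotone way, so it is uniformly bounded by some $\tilde C(t,\xi)$ for $\zeta \in B_\h(\xi,1)$.

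Combining this with the Lipschitz estimate \eqref{SF-MSM-R} of Proposition \ref{TSF-MSM}, for any $\zeta \in B_\h(\xi,1)$ I can write
\[
\lvert \CP_t\Phi(\xi)-\CP_t\Phi(\zeta)\rvert \le \frac{4\lVert \Phi\rVert_\infty \tilde C(t,\xi)}{R^2} + C(t,R)\lVert \Phi\rVert_\infty\, \lvert \xi-\zeta\rvert.
\]
Given $\varepsilon>0$, I would first choose $R$ so large that the first term is below $\varepsilon/2$, and then take $\lvert \xi-\zeta\rvert$ small enough to make the second term below $\varepsilon/2$. This establishes continuity of $\CP_t\Phi$ at the arbitrary point $\xi$, and since $\lVert \CP_t\Phi\rVert_\infty \le \lVert \Phi\rVert_\infty$, we conclude $\CP_t\Phi \in C_b(\h)$.

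The main obstacle, and the reason the scheme of \cite{FF+BM} is needed in the first place, is hidden in the input Proposition \ref{TSF-MSM}: the linear-in-$\lvert \xi-\zeta\rvert$ bound \eqref{SF-MSM-R} with constant $C(t,R)$ that may blow up in $R$ is what permits this elementary decomposition to close. The present proof only needs to check the uniform-in-$\zeta$ moment control and the coincidence of the two processes up to $\tau^R$; both are routine once the BEL-based Proposition \ref{TSF-MSM} is available.
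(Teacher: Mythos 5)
Your proposal is correct and follows essentially the same route as the paper: the same stopping-time decomposition into $\lvert \CP_t\Phi - \CP^R_t\Phi\rvert$ terms controlled by $\PP(\tau_R\le t)$ via Chebyshev and \eqref{Eq-9}, plus the Lipschitz bound \eqref{SF-MSM-R} for the truncated semigroup, with $R$ chosen large first and then $\lvert\xi-\zeta\rvert$ small. Your explicit remark that the moment constant $C(t,\zeta)$ depends on $\zeta$ only through $\lvert\zeta\rvert$ and is therefore uniform over $B_\h(\xi,1)$ is a small but genuine tightening of a point the paper passes over silently.
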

According to \cite[Theorem 0.3]{MH-08}, for the invariant measure
to be unique it is sufficient to find a point $\xi\in \h$ that is
accessible for $\CP_t$. The definition of an accessible point is
given in the following definition (see, for instance,
\cite{DP+JZ-96}, \cite{MH-08}).
\begin{Def}
Let $\mathcal{R}_\lambda$ be the resolvent of $\CP_t$ defined by
\begin{equation*}
\mathcal{R}_\lambda(\xi,\mathcal{U})=\int_0^\infty
e^{-\lambda t}[\CP_t\mathds{1}_\mathcal{U}](\xi) dt,
\end{equation*}
for any measurable set $\mathcal{U}\subset \h$, $\lambda >0$ and
$\xi\in \h$. A point $\mathbf{x} \in \h$ is accessible if, for
every $\xi\in \h$ and every open neighborhood $\mathcal{U}$ of
$\mathbf{x}$, one has $\mathcal{R}_\lambda(\xi, \mathcal{U})>0$.
\end{Def}
For our model we have the following result.
\begin{prop}\label{IRRED}
In addition to the assumptions of Proposition \ref{TRU-CUT}
suppose also that Assumption \ref{IID} holds.  Then, the point
$0\in \h$ is accessible for $\{\CP_t; t\ge 0\}$.
\end{prop}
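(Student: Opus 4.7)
The plan is to prove $\mathcal{R}_\lambda(\xi,\mathcal{U})>0$ by exhibiting a time $T>0$ for which $\PP(\bu(T,\xi)\in\mathcal{U})>0$ and then invoking the c\`adl\`ag property of $\bu$ in $\h$ (Proposition \ref{SHELL}) to transfer this into strict positivity of the resolvent. To produce such a $T$, I would exploit the splitting $\bu=\bv+\mathfrak{S}$ already introduced in the proof of Proposition \ref{SHELL}, with $\mathfrak{S}$ the stochastic convolution \eqref{STOC-CONV-1}--\eqref{STOC-CONV-2} and $\bv$ the pathwise solution of the random equation \eqref{stoc+v}, and establish two points: (a) if $\sup_{s\in[0,T]}|\mathfrak{S}(s)|$ is small, then $|\bv(T)|$ is small too; (b) this small-convolution event itself has positive probability.

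For (a), testing \eqref{stoc+v} with $\bv$, noting that $\langle\bb(\bv+\mathfrak{S},\bv),\bv\rangle=0$ by Assumption \ref{B}-\eqref{B-c}, and bounding the surviving terms $\langle\bb(\bv,\mathfrak{S}),\bv\rangle$ and $\langle\bb(\mathfrak{S},\mathfrak{S}),\bv\rangle$ via Assumption \ref{B}-\eqref{B-a}--\eqref{B-b} combined with Young's inequality would yield
\begin{equation*}
\frac{d}{dt}|\bv|^2+\kappa\|\bv\|^2\le C_1|\mathfrak{S}|^2|\bv|^2+C_2|\mathfrak{S}|^4 .
\end{equation*}
On the event $\{\sup_{[0,T]}|\mathfrak{S}|\le\delta\}$ with $2C_1\delta^2<\kappa\lambda_1$, the Poincar\'e inequality \eqref{Poincare} together with Gronwall's lemma then gives
\begin{equation*}
|\bv(T)|^2\le|\xi|^2 e^{-\kappa\lambda_1 T/2}+C'\delta^4 ,
\end{equation*}
which is made arbitrarily small by first choosing $T$ large (depending on $\xi$) and then $\delta$ small.

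For (b), the integration-by-parts identity $\mathfrak{S}_k(t)=\beta_k l_k(t)-\beta_k\kappa\lambda_k\int_0^t e^{-\kappa\lambda_k(t-s)}l_k(s)\,ds$ applied to \eqref{STOC-CONV-2} yields the pathwise bound $\sup_{t\in[0,T]}|\mathfrak{S}_k(t)|\le 2\beta_k\sup_{t\in[0,T]}|l_k(t)|$. I would then split the modes at a threshold $N$: for $k>N$, the moment estimates underlying \cite[Corollary 3.3]{SP+JZ-13} together with Assumption \ref{Lev-Meas}-\eqref{LMii} give $\EE\sup_{[0,T]}|\mathfrak{S}_k|^2\le C_T\beta_k^2$, so that choosing $N$ large (Assumption \ref{Beta} ensures $\sum\beta_k^2<\infty$) and applying Chebyshev produces an event of probability at least $1/2$ on which $\sup_{[0,T]}\sum_{k>N}|\mathfrak{S}_k(t)|^2<\delta^2/2$; for $k\le N$, Assumption \ref{IID} (Proposition \ref{small-dev}) and independence of $\{l_k\}_{k\le N}$ give
\begin{equation*}
\PP\Bigl(\bigcap_{k\le N}\bigl\{\sup_{[0,T]}|l_k|<\eta\bigr\}\Bigr)=\prod_{k\le N}\PP\bigl(\sup_{[0,T]}|l_k|<\eta\bigr)>0 ,
\end{equation*}
and a choice of $\eta$ with $4\eta^2\sum_{k\le N}\beta_k^2<\delta^2/2$ controls the low-mode contribution. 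Since the two mode blocks are measurable with respect to independent $\sigma$-algebras, their intersection has positive probability, whence $\PP(\sup_{[0,T]}|\mathfrak{S}|<\delta)>0$.

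The main obstacle is precisely this second step: transferring the one-dimensional small-deviation property of each $l_k$ into a small-ball estimate for the infinite-dimensional $\mathfrak{S}$, without any Gaussian concentration to lean on (the noise is pure jump with $\sigma$-finite L\'evy measure). The pathwise bound $|\mathfrak{S}_k|\le 2\beta_k\sup|l_k|$ obtained by integration by parts is what makes Proposition \ref{small-dev} directly applicable and is the key reduction. To conclude, fix $\delta_0>0$ with $B_\h(\delta_0)\subset\mathcal{U}$, and then choose $T$ and $\delta$ as above so that $|\bu(T,\xi)|\le|\bv(T)|+|\mathfrak{S}(T)|<\delta_0$ on the positive-probability event $\{\sup_{[0,T]}|\mathfrak{S}|<\delta\}$; right-continuity of $\bu$ in $\h$ then forces $\bu(t,\xi)\in\mathcal{U}$ on a random interval of positive length starting at $T$, and integrating against $e^{-\lambda t}\,dt$ and using Fubini yields $\mathcal{R}_\lambda(\xi,\mathcal{U})>0$, i.e., $0$ is accessible.
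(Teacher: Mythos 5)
Your proposal is correct and follows essentially the same route as the paper: the decomposition $\bu=\bv+\mathfrak{S}$, a pathwise energy/Gronwall estimate for $\bv$ on the event that $\sup_{[0,T]}|\mathfrak{S}|$ is small, and positivity of that event via the integration-by-parts bound $\sup_{[0,T]}|\mathfrak{S}_k|\le C\beta_k\sup_{[0,T]}|l_k|$, the one-dimensional small deviation property, independence of the $l_k$, and a high-mode/low-mode splitting. The only (harmless) deviations are that you control the high modes by a second-moment/Chebyshev bound where the paper cites \cite[Theorem 2.3]{SP+JZ-13}, and you pass to the resolvent via right-continuity at a single time $T$, whereas the paper's Claim II gives the smallness of $|\bu(t)|$ uniformly for all $t\ge T_0$ on the same event, making the final integration immediate.
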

Before we proceed to the statement and the proof of the main result of this paper we should give a refinement of the estimate \eqref{Eq-9}
in Proposition \ref{SHELL}.
\begin{lem}
There exists a constant $C>0$ such that for any $T>0$ and $\xi \in \h$ we have
\begin{align}
\EE\lvert \bu(T, \xi)\rvert^2 \le (\lvert \xi\rvert^2+C T) e^{-\frac\kappa\lambda_1T},\label{New-EST-1}\\
\EE \int_0^T \lvert A^\frac12 \bu(s,\xi)\rvert^2 ds\le (\lvert \xi\rvert^2+C T) +C\frac{\lambda_1}{\kappa}\label{New-EST-2}
\end{align}
\end{lem}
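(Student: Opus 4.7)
The plan is to run an It\^o energy estimate at the level of the Galerkin approximation $\bun$ constructed in the proof of Proposition~\ref{SHELL}, combine it with the Poincar\'e inequality~\eqref{Poincare}, invoke Gronwall, and then transfer the bounds to $\bu$ via the convergences \eqref{t0-1}--\eqref{t0-2} together with Fatou's lemma. Throughout, the constant $C$ is allowed to depend only on $\nu$, $\kappa$ and the family $\{\beta_k\}$, and may change from line to line.

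The starting point is It\^o's formula applied to $\lvert \bun(t)\rvert^2$. The dissipative term yields $-2\kappa\lvert \rA^{1/2}\bun(t)\rvert^2\,dt$, while the bilinear drift disappears thanks to Assumption~\ref{B}\eqref{B-c}. The contribution of the jump noise splits into a compensated-martingale part (which vanishes in expectation) and a compensator correction that, as in the estimates for $I_1$, $I_2$, $I_3$ carried out in the proof of Proposition~\ref{SHELL}, is controlled by
\begin{equation*}
C\sum_{k=1}^{\infty}\beta_k^{2}\int_{\mathbb{R}_0}z^{2}\,\nu(dz)\;+\;C\sum_{k=1}^{\infty}\beta_k\int_{\lvert z\rvert>1}\lvert z\rvert\,\nu(dz)\,\lvert\bun\rvert,
\end{equation*}
which is finite by Assumption~\ref{Lev-Meas}\eqref{LMii} and the summability $\sum_k\beta_k^{\alpha}<\infty$ encoded in Assumption~\ref{Beta}. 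Young's inequality then gives, after taking expectation,
\begin{equation*}
\frac{d}{dt}\EE\lvert\bun(t)\rvert^{2}+2\kappa\,\EE\lvert\rA^{1/2}\bun(t)\rvert^{2}\le C.
\end{equation*}

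Now I apply the Poincar\'e inequality $\lvert\rA^{1/2}\bun\rvert^{2}\ge \lambda_1\lvert\bun\rvert^{2}$ on (a fraction of) the dissipative term to obtain a closed linear differential inequality in $\EE\lvert\bun(t)\rvert^{2}$ with a strictly positive damping rate $c$ proportional to $\kappa\lambda_1$. Multiplying by $e^{ct}$ and integrating from $0$ to $T$ yields the first estimate \eqref{New-EST-1} for $\bun$; passing to the limit $n\to\infty$ via Fatou and \eqref{t0-1} transfers it to $\bu$. For \eqref{New-EST-2}, I integrate the differential inequality displayed above (keeping the full $2\kappa\EE\lvert\rA^{1/2}\bun(s)\rvert^{2}$ term on the left-hand side) from $0$ to $T$, plug in the just-obtained bound on $\EE\lvert\bun(s)\rvert^{2}$, divide through by $\kappa$ and pass to the limit using \eqref{t0-2}.

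The only genuine difficulty is bookkeeping the non-symmetric large-jump contribution. Because the big-jump Poisson integral in the definition of $L$ is not compensated, It\^o produces an extra linear-in-$\bun$ drift term $C\sum_k\beta_k\int_{\lvert z\rvert>1}z\,\nu(dz)\langle\bun,e_k\rangle$; I would absorb it into $\varepsilon\lvert\rA^{1/2}\bun\rvert^{2}+C_\varepsilon$ via Young's inequality plus Poincar\'e with a small $\varepsilon$, which leaves the positive damping rate in front of $\EE\lvert\bun(t)\rvert^{2}$ unchanged up to constants and produces the correct exponent in~\eqref{New-EST-1}. Everything else is routine manipulation with the moment bounds already established in Proposition~\ref{SHELL}.
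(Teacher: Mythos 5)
Your route is the same as the paper's: It\^o's formula for $\lvert\bun(t)\rvert^2$ at the Galerkin level, cancellation of the nonlinear drift via Assumption \ref{B}-\eqref{B-c}, control of the compensator and of the uncompensated large-jump cross term (which is linear in $\lvert\bun\rvert$) using Assumption \ref{Lev-Meas}-\eqref{LMii} and the summability of the $\beta_k$, absorption of that linear term by Young plus Poincar\'e \eqref{Poincare}, a Gronwall-type step, and passage to the limit via \eqref{t0-1}--\eqref{t0-2}; the integrated version of the same inequality gives \eqref{New-EST-2}. The one point to flag is your final integration: the damped inequality $\frac{d}{dt}\EE\lvert\bun(t)\rvert^2 + c\,\EE\lvert\bun(t)\rvert^2\le C$ integrates to $\EE\lvert\bun(T)\rvert^2\le \lvert\xi\rvert^2 e^{-cT}+C/c$, which is \emph{not} dominated by $(\lvert\xi\rvert^2+CT)e^{-cT}$ for large $T$ (the latter tends to $0$ as $T\to\infty$, and such decay cannot hold for a nondegenerately forced system), so the literal form of \eqref{New-EST-1} does not follow from your displayed inequality. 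To be fair, the paper's own derivation has the same soft spot: it passes from $\EE\lvert\bun(t)\rvert^2\le\lvert\xi\rvert^2+Kt-c\int_0^t\EE\lvert\bun(s)\rvert^2\,ds$ directly to $(\lvert\xi\rvert^2+Kt)e^{-ct}$, which is not a valid Gronwall conclusion. What is actually needed downstream (Krylov--Bogolyubov in Theorem \ref{Main result}) is only a $T$-uniform bound $\EE\lvert\bu(T)\rvert^2\le\lvert\xi\rvert^2+C$ together with the linear-in-$T$ control of the dissipation integral, and both your argument and the paper's deliver exactly that. So: same approach, correct in substance, but state the conclusion of the Gronwall step in the form it actually comes out rather than asserting that it ``yields \eqref{New-EST-1}'' verbatim.
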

\begin{proof}
We argue as in the proof of Proposition \ref{SHELL} and work with the Galerkin approximation. Note that thanks to the estimate \eqref{Eq-9} the stochastic
process $$ M_n(t)=\sum_{k=1}^n \int_0^t\int_{\lvert z\rvert\le 1} [\beta_k^2 \lvert z\rvert^2+2\beta_k \langle \bun(s), e_k\rangle z]d\tilde{\eta}_k(z,s), $$ is a martingale satisfying $\EE M_n(t)=0$ for any $t>0$.   Therefore, arguing as before we derive the following chain of equalities/inequalities
\begin{equation*}
\begin{split}
\EE \lvert \bun(t)\rvert^2 +2 \kappa \EE\int_0^t \lvert A^\frac12 \bun(s)\rvert^2 ds=\lvert \xi \rvert^2 + \sum_{k=1}^n \EE \int_0^t \int_{\lvert z\rvert^2 >1}[\beta_k^2 \lve z\rve^2 + 2\beta_k \langle \bun(s), e_k\rangle z] d\eta_k(dz,ds)\\ + 2t  \left(\int_{\mathbb{R}_0}\lve z\rve^2 \nu(dz)\right)\sum_{k=1}^n \beta_k^2 \\
\le \lve \xi\rve^2 + 4 C_\beta C_\nu t + 2\sum_{k=1}^n \int_0^t \int_{\lve z\rve>1}\EE \lve \bun(s)\rve\lve z\rve \nu(dz)ds\\
\le \lve \xi\rve^2 + 4 C_\beta C_\nu t + 2C_\beta C_\nu \int_0^t \EE \lve \bun(s)\rve ds.
\end{split}
\end{equation*}
Thanks to the Poincar\'e inequality \eqref{Poincare} and the fact that  $\lve \bun(s)\rve\le \frac{\lambda_1^2}{4 \kappa}+\frac{\kappa}{\lambda_1^2 }\lve \bun(s)\rve^2$, we derive from the chain of inequalities above that
\begin{equation*}
\EE \lvert \bun(t)\rvert^2\le  \lve \xi\rve^2 + 2 C_\beta C_\nu(2+\frac{\lambda_1^2}{\kappa}) t -\frac{\kappa}{\lambda_1^2}\int_0^t \EE \lve \bun(s)\rve^2 ds,
\end{equation*}
which implies that
\begin{equation*}
\EE \lve \bun(t)\rve^2 \le (\lve \xi\rve^2+ 2 C_\beta C_\nu(2+\frac{\lambda_1^2}{\kappa}) t)e^{-\frac{\kappa}{\lambda_1^2}t},\quad  t>0, \,\, n\in \mathbb{N}.
\end{equation*}
Observe that 
\begin{equation*}
\begin{split}
\EE \lvert \bun(t)\rvert^2 +2 \kappa \EE\int_0^t \lvert A^\frac12 \bun(s)\rvert^2 ds
\le \lve \xi\rve^2 + 4 C_\beta C_\nu t + 2C_\beta C_\nu \int_0^t \EE \lve \bun(s)\rve ds.
\end{split}
\end{equation*}
Therefore, there exists $C>0$ such that
\begin{equation*}
\EE \int_0^t \lve A^\frac12 \bun(s)\rvert^2 ds \le C(\lve \xi\rve^2+1)t +C \int_0^t se^{-\frac{\kappa}{\lambda_1^2}s} ds, \quad \forall t>0, \,\, n\in \mathbb{N}.
\end{equation*}
From the last two estimates, \eqref{t0-1} and \eqref{t0-2} we easily derive the proof of the lemma.
\end{proof}
 Now, we give in the next theorem the main result of the present work.
\begin{thm}\label{Main result}
 Let the assumptions of Proposition \ref{IRRED} holds. Then, the semigroup $\{\CP_t; t\ge 0\}$ admits a unique
invariant measure $\mu$ whose support is included in $\ve$.
\end{thm}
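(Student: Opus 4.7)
The two main analytic inputs --- the strong Feller property (Proposition \ref{TRU-CUT}) and the accessibility of $0 \in \h$ (Proposition \ref{IRRED}) --- are already in hand, so I would deduce Theorem \ref{Main result} essentially as a packaging of these two results together with the moment bound \eqref{New-EST-2} via the Krylov--Bogoliubov scheme and Hairer's uniqueness criterion \cite[Theorem 0.3]{MH-08}.

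For existence, I would take $\xi=0$ and form the Ces\`aro averages
\[ \mu_T(\cdot) \;=\; \frac{1}{T}\int_0^T \PP(\bu(s,0) \in \cdot)\, ds. \]
Estimate \eqref{New-EST-2} rewrites as $\int_\h \lvert A^{1/2}y\rvert^2\, \mu_T(dy) \le C$ uniformly in $T$. Since Assumption \ref{A}-\eqref{Ai} together with \ref{A}-\eqref{Aii} implies that $\ve = D(A^{1/2})$ is compactly embedded in $\h$, a Chebyshev argument gives tightness of $\{\mu_T\}_{T>0}$ in $\h$. Prokhorov then yields a weakly convergent subsequence $\mu_{T_k} \rightharpoonup \mu$, and the Feller property from Proposition \ref{SHELL} combined with the standard Krylov--Bogoliubov passage to the limit confirms that $\mu$ is invariant for $\{\CP_t\}_{t\ge 0}$.

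The support claim follows by Portmanteau applied to the bounded lower semicontinuous truncations $\phi_n(y) = \min(\lvert A^{1/2}y\rvert^2, n)$ on $\h$ (adopting the convention $\lvert A^{1/2}y\rvert = +\infty$ on $\h\setminus \ve$):
\[ \int_\h \phi_n\, d\mu \;\le\; \liminf_{k\to\infty} \int_\h \phi_n\, d\mu_{T_k} \;\le\; C. \]
Letting $n\to \infty$ and invoking monotone convergence yields $\int_\h \lvert A^{1/2}y\rvert^2\, \mu(dy) \le C$, so $\mu(\ve)=1$ and $\mathrm{supp}(\mu)\subset \ve$.

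Finally, for uniqueness I would invoke \cite[Theorem 0.3]{MH-08}: a Markov semigroup enjoying the strong Feller property together with an accessible point admits at most one invariant measure, and these are precisely the conclusions of Propositions \ref{TRU-CUT} and \ref{IRRED}. Since the difficult work (the BEL formula, the truncation argument, and the small-deviation controllability estimate) has already been carried out in the previous sections and in the appendices, no serious obstacle remains at this stage; the only mildly delicate point is the use of lower semicontinuous truncations in the support argument, which is entirely routine.
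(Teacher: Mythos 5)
Your proposal is correct and follows essentially the same route as the paper: existence and the support property via the Krylov--Bogoliubov scheme using the compact embedding $\ve\subset\h$ and the estimates \eqref{New-EST-1}--\eqref{New-EST-2} (the paper simply delegates these details to \cite{Chow+Kashminskii} and \cite{EH+PR}), and uniqueness from Propositions \ref{TRU-CUT} and \ref{IRRED} via \cite[Theorem 0.3]{MH-08}. Your write-up merely spells out the tightness and lower-semicontinuity arguments that the paper leaves to the cited references.
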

\begin{proof}
Owing to the compact embedding $\ve\subset \h$ and the estimates
\eqref{New-EST-1} and \eqref{New-EST-2} the existence of an invariant measure $\mu$ follows
from the Krylov–Bogolyubov theorem (see, for instance, the proofs
in \cite[Theorem 2.2]{Chow+Kashminskii} or \cite[Theorem
5.3]{EH+PR}). One can also argue as in \cite[Theorem 5.3]{EH+PR}
to show that the support of $\mu$ is included in $\ve$.

Thanks to Proposition \ref{TRU-CUT} and Proposition \ref{IRRED} we
 infer from \cite[Theorem 0.3]{MH-08} that the invariant
 measure $\mu$ is unique.
\end{proof}
Before we proceed to the proofs of our results we state the following remark.
\begin{Rem}
 It is clear from  Assumption \ref{Beta}  that the noise we consider in this paper is not cylindrical and it is not known whether our results hold for the stochastic shell models with cylindrical pure jump \levy noise.
\end{Rem}
Now, we give the proofs of Proposition \ref{TRU-CUT} and
\ref{IRRED}.
\begin{proof}[Proof of Proposition \ref{TRU-CUT}]
We will show that for any $\xi \in \h$, $\eps>0$ there exists a constant $\delta>0$ such that
\begin{equation*}
 \begin{split}
  \lvert \CP_{t}\Phi(\xi)-\CP_{t} \Phi(\zeta)\rvert
  \le  \eps,
 \end{split}
\end{equation*}
for any $\Phi \in B_b(\h)$ with $\Lve \Phi\Rve_\infty\le 1$, $\zeta \in B_\h(\xi,\delta)$.

For this purpose, let $\Phi \in B_b(\h)$ and $R>R_0$ where $R_0$ will be fixed later. Let $ \bu(\cdot, \xi)$ and $\bu(\cdot, \zeta)$ be solutions of \eqref{Hydro} with the initial conditions $\xi\in \h$ and $\zeta \in \h$, respectively. For any $\xi\in \h$ let $\{\tau_R(\xi); R >0\}$ be the family of stopping times
 defined by
 $$ \tau_R(\xi) :=\inf\{t\ge 0; \lvert \bu(t, \xi)\rvert\ge R \}.$$
  Since, by definition of $\bu^R$ and uniqueness of solution of \eqref{Hydro} (see also Remark \ref{REM-SECT-4}), $\bu(t, \cdot)=\bu^R(t,\cdot)$ on $\{t\le \tau_R(\cdot) \}$,  we obtain that
  \begin{equation*}
 \begin{split}
  \lvert \CP_t\Phi(\xi)-\CP_t \Phi(\zeta)\rvert \le & \lvert \CP_t\Phi(\xi)-\CP^R_t
  \Phi(\xi)\rvert+\lvert \CP^R_t\Phi(\xi)-\CP^R_t \Phi(\zeta)\rvert+\lvert \CP^R_t\Phi(\zeta)-\CP_t
  \Phi(\zeta)\rvert\\
  \le & 2 \Lve \Phi\Rve_\infty \mathbb{P}(\tau_R(\xi)<t )+ 2 \Lve \Phi\Rve_\infty \mathbb{P}(\tau_R(\zeta)
  <t)+\lvert \CP^R_t\Phi(\xi)-\CP^R_t \Phi(\zeta)\rvert.
 \end{split}
\end{equation*}
For any $t> 0$, $p\in \{2,4\}$ and $R>0$ we have 
\begin{align*}
\mathbb{E}\lvert \bu(t\wedge \tau_R,\xi)\rvert^p =\mathbb{E}\left(\vert \bu(\tau_R,\xi) \vert^p \mathds{1}_{\tau_R<t}\right) + \mathbb{E}\left(\vert \bu(t,\xi) \vert^p \mathds{1}_{t\le \tau_R}\right).
\end{align*}
From the \cadlag property of $\bu(\cdot,\xi)$ and the definition of $\tau_R$ it follows that $\mathbb{E} \vert \bu(\tau_R,\xi)\vert^p\ge R^p $. Thus,
$$ \mathbb{E}\lvert \bu(t\wedge \tau_R,\xi)\rvert^p\ge R^p \mathbb{P}(\tau_R<t).$$ 
By Proposition \ref{SHELL}, for any $t>0$ and $\xi \in \h$
there exists $C(t,\xi)$ such that  $$\mathbb{E}\sup_{s\in [0,t]}\lvert
\bu(s,\xi)\rvert^p <C(t,\xi),$$ from which and the former estimate we can deduce that for any
$\xi\in \h$ and $t>0$
\begin{equation}
\mathbb{P}(\tau_R(\xi)<t )\le \frac{1}{R^p} C(t,\xi).
\end{equation}
Thanks to Proposition \ref{TSF-MSM} the Markov semigroup $\{ \CP^R_t, t\ge 0\}$ has the strong
Feller property. In particular, we infer from \eqref{SF-MSM-R}
that for any $\Phi\in B_b(\h)$, $\xi \in \h$ we have
\begin{equation*}
 \begin{split}
  \lvert \CP_{t}\Phi(\xi)-\CP_{t} \Phi(\zeta)\rvert
  \le \Lve \Phi\Rve_{\infty} \biggl[\frac{4 C(t,\xi)}{R^p}+C(R,t) \lve \xi-\zeta\rvert\biggr].
 \end{split}
\end{equation*}
Choosing $$R\ge \biggl(\frac{8 C(t,\xi)}{\eps}\biggr)^\frac1p
\text{ and } \delta\le \frac{\eps}{2 C(R,t)},$$ we derive that
 \begin{equation*}
 \begin{split}
  \lvert \CP_{t}\Phi(\xi)-\CP_{t} \Phi(\zeta)\rvert
  \le  \eps,
 \end{split}
\end{equation*}
for any
$\xi\in \h$, $\zeta \in B_\h(\xi,\delta)$, and $\Phi \in B_b(\h)$ with $\Lve \Phi\Rve_\infty\le 1$. This proves that the Markov semigroup $\{\CP_t; \; t\ge 0\}$ is
strong Feller.
\end{proof}

\begin{proof}[Proof of Proposition \ref{IRRED}]
The first step of the proof is to check the following claim. Fixed
any $\eps>0$ and for any $t>0$ define
$$\Omega^\ast=\{\omega\in \Omega: \sup_{s\in [0,t]} \lvert
\mathfrak{S}(s,\omega)\rvert^2< \eps\}$$
where $\mathfrak{S}$ is the stochastic convolution defined by \eqref{STOC-CONV-1}-\eqref{STOC-CONV-2}.

\noindent \textbf{Claim I}: \textit{For any $\eps>0$ and $t>0$ we
have $\mathbb{P}(\Omega^\ast)>0$.}

To prove this claim we first observe that
$$ \mathbb{P}(\Omega^\ast)\ge \mathbb{P} (\Omega^\ast_{1,N}) \cdot
\mathbb{P} (\Omega^\ast_{2,N}),$$ where $N$ is a certain large
integer and $$ \Omega^\ast_{1,N}:=\{\omega: \sup_{s\in
[0,t]}\sum_{k\ge N} \lvert \mathfrak{S}_k(s,\omega)\rvert^2 <\frac\eps
2\},$$
$$ \Omega^\ast_{2,N}:=\{\omega: \sup_{s\in
[0,t]}\sum_{k< N} \lvert \mathfrak{S}_k(s,\omega)\rvert^2 <\frac\eps
2\}.$$ Let
$$ l_k(t)=\int_0^t \int_{\mathbb{R}}z\bar{\eta}_k(z,s),\,\, t\ge 0,
k\in \mathbb{N}.$$
In the remaining part of the  proof we use without further notice the shorthand notations $\mathfrak{S}(s) $ and $l_k(s)$ to denote $\mathfrak{S}(s,\omega)$ and $l_k(s,\omega)$, respectively.

 For each $k$ the process $l_k$ defines a \levy
process with \levy measure $\nu$. For each $t\ge 0$ and $k\in
\mathbb{N}$ the function $e^{-\lambda_k(t-\cdot)}$ is
differentiable, we can apply \cite[Proposition 9.16]{SP+JZ} to
derive that
\begin{equation*}
\mathfrak{S}_k(t)=\left(l_k(t)-{ \kappa}\lambda_k \int_0^t
e^{-\lambda_k(t-s)}l_k(s)ds\right){ \beta_{k}},
\end{equation*}
for any $t>0$ and $k\in \mathbb{N}$. From this last identity we
easily infer that
\begin{equation*}
\sup_{s\in [0,t]}\lvert \mathfrak{S}_k(s)\rvert\le (1+\kappa)\beta_k \sup_{s\in
[0,t]}\lvert l_k(s)\rvert,
\end{equation*}
for any $t>0$ and $k\in \mathbb{N}$. Thanks to the inequality
\begin{equation*}
\sup_{s\in [0,t]}\sum_{k\le N} \lvert \mathfrak{S}_k(s)\rvert^2
\le \sum_{k\le N} \sup_{s\in [0,t]}\lvert
\mathfrak{S}_k(s)\rvert^2\le (1+\kappa)^2 \sum_{k\le N}\beta_k^2 \sup_{s\in
[0,t]}\lvert l_k(s)\rvert^2,
\end{equation*}
we have that $$ \mathbb{P}(\Omega^\ast_{2,N})\ge
\mathbb{P}\biggl(\sum_{k\le N}\beta_k^2 \sup_{s\in [0,t]}\lvert
l_k(s)\rvert^2<\frac\eps{2(1+\kappa)^2}\biggr).$$ Since
$$\{\omega; \sup_{s\in [0,t]} \lvert l_k(s)\rvert <
\frac{1+\kappa}{\beta_k} \sqrt{\frac{\eps}{2N} }\text{ for all } k\le
N\}\subset \{\omega; \sum_{k\le N}\beta_k^2\sup_{s\in [0,t]}\lvert
l_k(s)\rvert^2<\frac\eps {2(1+\kappa)^2}\}$$ and the elements of $\{l_k; k\le
N\}$ are mutually independent, we obtain that
$$ \mathbb{P}(\Omega^\ast_{2,N})\ge
\mathbb{P}\biggl(\sum_{k\le N}\beta_k^2 \sup_{s\in [0,t]}\lvert
l_k(s)\rvert^2<\frac\eps {2(1+\kappa)^2}\biggr)\ge \prod_{k\le N}
\mathbb{P}\left(\sup_{s\in [0,t]} \lvert l_k(s)\rvert <
\frac{1+\kappa}{\beta_k} \sqrt{\frac{\eps}{2N} }\right).$$ Since  the \levy
measure $\nu$ satisfies Assumption \ref{IID} we easily derive that
$\mathbb{P}(\Omega^\ast_{2,N})>0$.
 Since the stochastic convolution $\mathfrak{S}$ is \cadlag
and taking values in $\h$  it follows from \cite[Theorem
2.3]{SP+JZ-13} that there exists an integer $N_0>0$ such that
$\mathbb{P}(\Omega^\ast_{1,N})>0$ for any $N\ge N_0$.
 Now we easily conclude that $\mathbb{P} (\Omega^\ast_{1,N}) \cdot
\mathbb{P} (\Omega^\ast_{2,N})>0$ and thus the proof of the
\textbf{Claim I}.

Now we pass to the next step of the proof of Proposition
\ref{IRRED}. Before proceeding further we introduce a notation.
For any fixed $\delta >0$ and $T>0$ set
$$ \tilde{\Omega}^\ast(\delta, T)= \{\omega\in \Omega: \sup_{s\in [0,T] \lvert
\mathfrak{S}(s,\omega)\rvert^2}< \min\left(\delta,
\frac{\kappa^2}{4 \lambda_1C_0^2}\right)\},$$  where $C_0$ is the
positive constant from Assumption \ref{B}-\eqref{B-a}. The next
step of the proof is to check the validity  of the following
claim.

\noindent \textbf{Claim II}:\textit{ For any $R>0$ and $\gamma>0$
there exist $T_0>0$ and $\delta_0>0$ such that for any $t\ge T_0$,
$\xi \in B_\h(R)$  and $\omega\in \tilde{\Omega}^\ast(\delta_0,
T_0)$
\begin{equation*}
\lvert \bu(t,\omega)\rvert^2\le \gamma.
\end{equation*}
}

\noindent To check this claim we closely follow \cite{EM01}. We
multiply \eqref{stoc+v} by $\bv$ in the scalar product of $\h$ and
obtain
\begin{equation*}
\frac12 \frac{d}{dt}\lvert \bv(t)\rvert^2 +\kappa \lvert
A^{\frac12} \bv(t)^2 =\langle \bb(\bv(t),
\mathfrak{S}(t))+\bb(\mathfrak{S}(t), \mathfrak{S}(t)),
\bv(t)\rangle,
\end{equation*}
where we used Assumption \ref{B}-\eqref{B-c}. Using Assumption
\ref{B}-\eqref{B-a} and Cauchy's inequality we derive that
\begin{align*}
 \frac{d}{dt}\lvert \bv(t)\rvert^2 +2 \kappa \lvert A^{\frac12}
\bv(t)^2 \le 2 \kappa^{-1} C_0^2[ \lvert \bv(t)\rvert^2 \lvert
\mathfrak{S}(t)\rvert^2+\lvert \mathfrak{S}(t)\rvert^4]+\kappa
\lvert A^{\frac12} \bv(t)\rvert^2\\
\frac{d}{dt}\lvert\bv(t)\rvert^2 \le 2 \kappa^{-1} C_0^2[ \lvert
\bv(t)\rvert^2 \lvert \mathfrak{S}(t)\rvert^2+\lvert
\mathfrak{S}(t)\rvert^4]-\kappa \lvert A^{\frac12} \bv(t)\rvert^2.
\end{align*}
Using the inequality \eqref{Poincare} we obtain
\begin{equation*}
\frac{d}{dt}\lvert\bv(t)\rvert^2 \le [2 \kappa^{-1} C_0^2  \lvert
\mathfrak{S}(t)\rvert^2- \lambda^{-1}\kappa ]\lvert
\bv(t)\rvert^2+ 2 \kappa^{-1} C_0^2   \lvert
\mathfrak{S}(t)\rvert^4
\end{equation*}
Using the Gronwall's inequality we infer that on
$\tilde{\Omega}^\ast(\delta, T)$ we have
\begin{equation*}
\lvert \bv(t)\rvert^2\le \lvert \xi\rvert^2
e^{-\frac{\kappa}{2\lambda_1}t}+2\kappa^{-1}C_0^2
\left(\min\left(\delta, \frac{\kappa^2}{4
\lambda_1C_0^2}\right)\right)^2.
\end{equation*}
Thus, for any $R>0$ and $\gamma>0$ we can find $T_0$ and
$\delta_0$ such that on $\tilde{\Omega}^\ast(\delta_0, T_0)$
\begin{equation*}
\lvert \bv(t)\rvert^2 \le \frac\gamma 4,
\end{equation*}
for any $t \ge T_0$ and $\xi \in B_\h(R)$. Choosing $\delta_0$
small we can assume that on $\tilde{\Omega}^\ast(\delta_0, T_0)$
$$ \lvert \mathfrak{S}(t)\rvert^2 \le \frac\gamma 4, \text{ for any } t\ge
T_0.$$ Thus, for any $R>0$ and $\gamma>0$ we found $T_0>0$ and
$\delta_0$ such that on $\tilde{\Omega}^\ast(\delta_0, T_0)$
\begin{equation*}
\lvert \bu(t)\rvert^2=\lvert \bv(t)+\mathfrak{S}(t)\rvert^2
\le \gamma,
\end{equation*}
for any $t\ge T_0$ and $\xi \in B_\h(R)$. This completes the proof
of \textbf{Claim II}.

Now we finalize the proof of Proposition \ref{IRRED}.  We easily
infer from \textbf{Claim II} that for any $R>0$ and $\gamma>0$
there exist two positive constants $T_0$, $\delta_0$ such that for any $t\ge T_0$ and
$\xi\in B_\h(R)$
$$ [\CP_t \mathds{1}_{B_\h(\gamma)}](\xi)\ge \mathbb{P}(\tilde{\Omega}^\ast(\delta_0,
T_0)).$$ Since, by \textbf{Claim I}, we know that
$\mathbb{P}(\tilde{\Omega}^\ast(\delta_0, T_0))>0$ and $R>0$ is arbitrary, we deduce that
for any $t\ge T_0$ and $\xi\in \h$
\begin{equation*}
 [\CP_t \mathds{1}_{B_\h(\gamma)}](\xi)>0.
\end{equation*}
This implies that for every $\xi\in \h$ and every open
neighborhood $\mathcal{U}$ of $0$, one has
$\mathcal{R}_\lambda(\xi, \mathcal{U})>0$,
from which we conclude the proof of Proposition \ref{IRRED}.
\end{proof}
\section{Analytic study of the modified stochastic shell model
\eqref{Hydro-1}}\label{SEC-MOD-SHELL}
 In this section we analyze
the modified stochastic shell model \eqref{Hydro-1}. We are mainly
interested in { the existence and uniqueness of the solution and its qualitative properties}.

\subsection{Resolvability of the modified problem \eqref{Hydro-1}}
Let us first introduce the concept of solution.
\begin{Def}
Let $T>0$ be a real number. An $\mathbb{F}$-adapted process $\bu^R$ taking values in $\h$ is called a solution of Eq. \eqref{Hydro-1}
 if the following conditions are satisfied:
 \begin{enumerate}[(i)]
  \item $\bu^R \in L^2(0,T; \h) $ $\mathbb{P}$-almost surely,
  \item the following equality holds for every $t\in [0,T]$ and $\mathbb{P}$-a.s,
  \begin{equation}
   (\bu^R(t),\phi)=(\xi,\phi)-\int_0^t\left(\langle \kappa \rA\bu^R (s)+\bb^R(\bu^R(s), \bu^R(s)),\phi \rangle\right)ds+
   \langle L(t), \phi \rangle,
  \end{equation}
for any $\phi\in \ve$.
\end{enumerate}
\end{Def}
\begin{prop}\label{MOD-SHELL}
Let the assumptions of Proposition \ref{SHELL} hold. Then for each
$R>0$  the problem \eqref{Hydro-1} has a unique solution $\bu^R$
which has a \cadlag modification in $\h$. Moreover, $\bu^R$ is a
Markov process having the Feller property.
\end{prop}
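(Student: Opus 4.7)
\textbf{Proof plan for Proposition \ref{MOD-SHELL}.} The strategy is to repeat the scheme of the proof of Proposition \ref{SHELL}, exploiting the fact that the cutoff $\rho(|\bu|^2/R)$ tames the nonlinearity. First I would observe that $\bb^R$ inherits the cancellation property $\langle \bb^R(\bu,\bv),\bv\rangle=0$ from Assumption \ref{B}-\eqref{B-c} (since $\rho$ is a scalar factor) and, crucially, vanishes whenever $|\bu|^2\geq 2R$. Together with Assumption \ref{B} this yields the uniform bound $\Vert\bb^R(\bu,\bu)\Vert_{\ve^\ast}\leq 2RC_0$ on $\h$, as well as local Lipschitzness: for any $\bu_1,\bu_2\in\h$ one has
\begin{equation*}
\bb^R(\bu_1,\bu_1)-\bb^R(\bu_2,\bu_2) = \rho_1\bigl[\bb(\bu_1-\bu_2,\bu_1)+\bb(\bu_2,\bu_1-\bu_2)\bigr] + \bigl(\rho_1-\rho_2\bigr)\bb(\bu_2,\bu_2),
\end{equation*}
with $\rho_i=\rho(|\bu_i|^2/R)$, so $|\rho_1-\rho_2|\leq (2/R)(|\bu_1|+|\bu_2|)|\bu_1-\bu_2|$.

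For existence, I would run the Galerkin scheme just as in the proof of Proposition \ref{SHELL}: on $\h_n$ the truncated SDE has globally Lipschitz (in fact bounded) drift on each bounded set, so a \cadlag solution $\bu^R_n$ exists by \cite[Theorem 3.1]{Albeverio}. Applying It\^o's formula to $|\bu^R_n|^p$, $p\in\{2,4\}$, and using $\langle\Pi_n\bb^R(\bu^R_n,\bu^R_n),\bu^R_n\rangle=0$, the jump-term estimates and Young's inequality carried out in Proposition \ref{SHELL} go through verbatim and deliver the analogue of \eqref{Eq-10}. Compactness and monotonicity-type arguments as in \cite{EH+PR} then produce a limit $\bu^R$ that is a solution.

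Uniqueness is the most delicate point. If $\bu^R_1,\bu^R_2$ are two solutions starting from $\xi$ and $\bw=\bu^R_1-\bu^R_2$, then by the splitting above together with Assumption \ref{B}-\eqref{B-b} and the uniform bound $|\bu^R_i|\leq\sqrt{2R}$ on the support of $\rho_i$, we obtain
\begin{equation*}
\frac{d}{dt}|\bw(t)|^2 + 2\kappa\Vert\bw(t)\Vert^2 \leq C(R)\Vert\bw(t)\Vert\,|\bw(t)| \leq \kappa\Vert\bw(t)\Vert^2+C(R,\kappa)|\bw(t)|^2,
\end{equation*}
and Gronwall gives $\bw\equiv 0$. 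The same quantitative estimate, applied to $\bu^R(\cdot,\xi)-\bu^R(\cdot,\zeta)$, yields continuous (indeed Lipschitz on bounded sets) dependence on the initial datum in $\h$, which is exactly the ingredient needed for the Feller property.

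For the \cadlag modification I would reuse the decomposition $\bu^R=\bv^R+\mathfrak{S}$ from the proof of Proposition \ref{SHELL}: $\mathfrak{S}$ is already \cadlag in $\h$, and $\bv^R$ solves a pathwise deterministic equation with bounded truncated drift whose solution lies in $C([0,T];\h)\cap L^2(0,T;\ve)$ by the same arguments as in Appendix \ref{APP-CONV}. The map $(x,y)\mapsto x+y$ from $C([0,T];\h)\times\mathbb{D}([0,T];\h)$ to $\mathbb{D}([0,T];\h)$ is continuous by \cite[Theorem 4.1]{Whitt}. Finally the Markov property follows from pathwise uniqueness and time-homogeneity of $\eta$ as in \cite[Section 6]{AMR-09}, and the Feller property from the continuous dependence established in the uniqueness step. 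The main obstacle is controlling the $(\rho_1-\rho_2)\bb(\bu^R_2,\bu^R_2)$ contribution in the difference equation; this is resolved precisely because the cutoff forces $|\bu^R_i|\leq\sqrt{2R}$ whenever $\rho_i\neq 0$, converting what is otherwise a quadratic term in $\bw$ into a linear one with an $R$-dependent constant.
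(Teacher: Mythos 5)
Your overall scheme coincides with the paper's: Galerkin approximation plus the estimates of Proposition \ref{SHELL} for existence and uniqueness, the decomposition $\bu^R=\bv^R+\mathfrak{S}$ together with \cite[Theorem 4.1]{Whitt} for the \cadlag modification, and \cite[Section 6]{AMR-09} for the Markov property. The one place you genuinely diverge is the Feller property: you derive Lipschitz dependence on the initial datum from a variational energy estimate on $\bw=\bu^R(\cdot,\xi)-\bu^R(\cdot,\zeta)$, absorbing $C_R\Vert\bw\Vert\,\lvert\bw\rvert$ into the dissipation by Young's inequality, whereas the paper works with the mild formulation, using the smoothing bound $\lvert e^{-t\rA}f\rvert\le Ct^{-1/2}\Vert f\Vert_{\ve^\ast}$ together with the global Lipschitz estimate $\Vert\bb^R(u,u)-\bb^R(v,v)\Vert_{\ve^\ast}\le C_R\lvert u-v\rvert$ and a singular Gronwall lemma. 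Both are valid; your route is more elementary and yields uniqueness and Feller in one stroke, while the paper's avoids justifying the chain rule for $\lvert\bw\rvert^2$ (in your argument this requires noting that the additive stochastic convolutions cancel, so $\bw=\bv^R_1-\bv^R_2\in C(0,T;\h)\cap L^2(0,T;\ve)$). One technical caveat: the splitting $\bb^R(\bu_1,\bu_1)-\bb^R(\bu_2,\bu_2)=\rho_1\bigl[\bb(\bw,\bu_1)+\bb(\bu_2,\bw)\bigr]+(\rho_1-\rho_2)\bb(\bu_2,\bu_2)$ does not by itself give the bound $C_R\lvert\bw\rvert$ in the regime $\lvert\bu_1\rvert^2<2R\le\lvert\bu_2\rvert^2$, where $\rho_2=0$ but $\lvert\bu_2\rvert$ is unbounded and $\Vert\bb(\bu_2,\bu_2)\Vert_{\ve^\ast}$ grows quadratically in $\lvert\bu_2\rvert$; the cutoff bounds the argument carrying $\rho_1$, not the one appearing in $(\rho_1-\rho_2)\bb(\bu_2,\bu_2)$. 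The fix is standard, either symmetrize the splitting or apply the mean value theorem to $F(u)=\rho(\lvert u\rvert^2/R)\bb(u,u)$, whose Fr\'echet derivative is bounded by $C_R$ in $\mathcal{L}(\h,\ve^\ast)$ because both $\rho$ and $\rho'$ vanish for $\lvert u\rvert^2\ge 2R$, but as written the claimed linear control is not fully established.
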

 \begin{proof}
 Let $\burn$ the solution of the following
\begin{equation}
 \begin{split}\label{Hydro-2}
  & d\burn+[{ \kappa} \rA\burn+\bb^R_n (\burn,\burn)]dt=\sum_{k=1}^n \int_{\mathbb{R}_0} z \beta_k
  e_k\;d\bar{\eta}_k(dz,dt) ,\\
  & \burn(0)=\Pi_n \xi \in \h_n.
 \end{split}
 \end{equation}
 The problem
\eqref{Hydro-2} is a system of SDEs with globally Lipschitz
coefficients. Thus it has a unique c\`adl\`ag solution $\burn$
which is a Markov process taking values in $\h_n$ (see, for
instance, \cite{Albeverio}).

Now the existence and uniqueness of a solution $\bu^R$ can be
established by arguing exactly as in the proof of Proposition
\ref{SHELL}.

Note that thanks to Assumption \ref{A} and Assumption \ref{B} we
can also argue as in \cite{EH+PR} and show that for any $T>0$ we
have
\begin{align}
\lim_{n\to \infty} \EE\lvert \bun^R(T)-\bu^R(T)\rvert^2=0,\nonumber \\
\lim_{n \to \infty}\EE \int_0^T \lvert \rA^\frac12[
\bun^R(s)-\bu^R(s)]\rvert^2 ds=0. \label{CONV-R}
\end{align}

Now it remains to prove that the solution $\bu^R$ to \eqref{Hydro-1} has a
\cadlag modification in $\h$. The argument is very similar to the
idea of the proof of Proposition \ref{SHELL}. Let $\mathfrak{S}$
be the stochastic convolution defined in
\eqref{STOC-CONV-1}-\eqref{STOC-CONV-2}.
\del{ by
\begin{equation*}
\mathfrak{S}(t)=\sum_{k=1}^\infty \mathfrak{S}_k(t) e_k,
\end{equation*}
where each $\mathfrak{S}_k$ is the solution to
\begin{equation*}
d\mathfrak{S}_k(t)=-\lambda_k \mathfrak{S}_n(t) dt+\beta_k
\int_{\mathbb{R}_0} z d\bar{\eta}_k(dz,dt).
\end{equation*}
Since, by assumption, $$ \sum_{k=1}^\infty\left( \lambda_k^{-1}
\beta_k^2 +\beta^4\right)=\sum_{k=1}^\infty
\left(\lambda^{-k(1+2\theta)}+\lambda^{-4\theta k}
\right)<\infty$$ it follows from \cite[Corollary 3.3 and Example
3.4]{SP+JZ-13} that $\mathfrak{S}$ has a  \cadlag modification in
$\h$.}
Arguing as in Appendix \ref{APP-CONV} we can  show that the
following evolution equation
\begin{equation}\label{Eq-8}
\begin{split}
\frac{d}{d t}\bv(t) +{ \kappa}\mathrm{A}\bv(t)+\rho(\lvert
\bv(t)+\mathfrak{S}(t)\rvert^2/R)
\bb(\bv(t)+\mathfrak{S}(t),\bv(t)+\mathfrak{S}(t))
=0, \, t\in [0,T],\\
\bv(0)=\xi\in \h,
\end{split}
\end{equation}
has a unique solution $\bv^R\in C(0,T;\h)\cap L^2(0,T;\ve)$. Now,
the stochastic process $\bu^R$ can be written as $\bu^R=\bv^R +
\mathfrak{S}$ where $\bv^R\in C(0,T;\h)\cap L^2(0,T;\ve)$ is the
unique solution of \eqref{Eq-8}.  Thanks to \cite[Theorem
4.1]{Whitt} the function $$\phi: C([0,T];\h)\times
\mathbb{D}([0,T];\h) \ni (x,y)  \mapsto x+y \in
\mathbb{D}([0,T];\h),$$ is continuous, hence $\bu^R$ has a \cadlag
modification in $\h$.

The proof that $\bu^R$ is a Markov process follows from the
argument in \cite[Section 6]{AMR-09}.

To show that $\bu^R$ has the Feller property we first remark that
for any  $\xi, \zeta\in \h$ we have
\begin{equation*}
\begin{split}
\lvert \bu^R(t,\xi)-\bu^R(t, \zeta)\rvert- C e^{-\lambda_1t}\lve
\xi-\zeta\rve \\
\le  C \int_0^t (t-s)^{-1/2} e^{-\lambda_1(t-s)} \Lve
\bb^R(\bu^R(s,\xi),\bu^R(s,\xi))-\bb^R(\bu^R(s, \zeta),\bu^R(s, \zeta))\Rve_{\ve^\ast}ds
.
\end{split}
\end{equation*}
Since $$\Lve \bb^R(u,u)-\bb^R(v,v) \Rve_{\ve^\ast}\le C_R \lve
u-v\rve,$$ for any $u,v\in \h$ we infer that
\begin{align*}
\lvert \bu^R(t,\xi)-\bu^R(t, \zeta)\rvert\le e^{-\lambda_1t}\lve
\xi-\zeta\rve+C \int_0^t (t-s)^{-1/2} e^{-\lambda_1(t-s)} \lve
\bu^R(s,\xi)-\bu^R(s, \zeta)\rve ds.
\end{align*}
From Gronwall's inequality we deduce that
\begin{equation*}
\lvert \bu^R(t,\xi)-\bu^R(t, \zeta)\rvert\le C(R,T) \lve \xi-\zeta\rve,
\end{equation*}
from which the Feller property of $\bu^R$ easily follows.
 \end{proof}
 \begin{Rem}\label{REM-SECT-4}
  Let $\bu$ be the solution to \eqref{Hydro} and $\{\tau_R; R\in \mathbb{N}\}$ be a sequence of stopping times
  defined by
  $$ \tau_R :=\inf\{t\ge 0; \lvert \bu(t)\rvert\ge R \}.$$
  It is clear that $\bb(\bu(t),\bu(t)):= \bb^R(\bu^R(t),\bu^R(t)$ on $\{t\le 
  \tau_R
  \}$, thus by uniqueness of solution of the system \eqref{Hydro} we infer that $$ \bu=\bu^R \text{ on } \{t\le
  \tau_R
  \}.$$	
 \end{Rem}

 \subsection{Strong feller property of the solution of \eqref{Hydro-2}}
\del{For the proof of \eqref{A} we need to work with the Galerkin
approximation of \eqref{Hydro-1}. That is,}

We have seen in  the proof of Theorem \ref{MOD-SHELL} that the
solution  $\burn$  of the Galerkin approximation (see equation
\eqref{Hydro-2}) generates a Markov semigroup $\CP^R_{t,n}$
defined by
$$\CP^R_{t,n}\Phi(\xi)=\EE[\Phi(\burn(t,\xi))],$$
for any $\Phi\in \mathcal{B}_b(\h_n)$ and $\xi \in \h_n$. 
{ Now, we will show }the smoothing property of $\CP^R_{t,n}$.

 Since
the coefficients of \eqref{Hydro-2} belong to $C^2(\h_n;\h_n)$ the
mapping $\xi_n \ni \h_n \mapsto \burn$ is $C^1$ differentiable and
the derivative $U^R_n(s, x):=\nabla_x \burn(s,\xi)$ in the
direction of $x\in \mathbf{H}_n$ at point $\xi_n\in \h_n$ is the
solution of the linearized equation
\begin{equation}
 \begin{split}\label{Hydro-3}
  & d\Burn(t,x)+[\kappa \rA\Burn(t,x)+\nabla \bb^R_n (\burn(t,\xi),\burn(t,\xi))[\Burn(t,x)]]dt=0,\\
  & \Burn(0)=x.
 \end{split}
 \end{equation}
\begin{lem}
For any $t>0$ and
$\xi\in \h_n$ the system \eqref{Hydro-3} has a unique solution $\Burn$ such that
$\Burn \in C(0,t;\h_n)\cap L^2(0,t;\ve_n)$ . Moreover, for any $R>0$ there exists a constant $C_R>0$ such
that
\begin{equation}\label{DER-GAL}
\sup_{\substack{x\in \h_n,\\\lvert x\rvert\le 1}}\biggl[\EE\lvert
\Burn(s,x)\rvert^2 + \kappa \EE \int_0^t \lvert \mathrm{A}^\frac12
\Burn(s,x)\rvert^2 ds\biggr]\le (1+C_R e^{\frac4{\kappa}t}),\,\,
t>0.
\end{equation}
\end{lem}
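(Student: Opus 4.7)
The plan is as follows. Pathwise, for each fixed $\omega$, equation \eqref{Hydro-3} is a linear (in $\Burn$) ordinary differential equation on the finite dimensional space $\h_n$, driven by the time-dependent coefficient $t \mapsto \kappa\rA + \nabla \bb^R_n(\burn(t,\xi),\burn(t,\xi))$. Since $\rho$ and $\rho'$ are bounded and both $\rho(|u|^2/R)$ and $\rho'(|u|^2/R)$ vanish outside $|u|^2\le 2R$, the nonlinear coefficient $\nabla \bb^R_n(\burn,\burn)$ is uniformly bounded (with a bound depending only on $R$) and cadlag in time. Standard linear ODE theory then gives, for each $x\in\h_n$, a unique absolutely continuous path $\Burn(\cdot,x)\in C(0,t;\h_n)$, and $\mathbb{F}$-adaptedness is inherited from $\burn$; the $L^2(0,t;\ve_n)$-regularity will come out of the energy estimate.

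The bound \eqref{DER-GAL} is obtained by the standard energy method. Pairing \eqref{Hydro-3} with $\Burn$ in $\h$ yields the pathwise identity
$$\tfrac12 \tfrac{d}{dt}|\Burn|^2 + \kappa |\rA^{1/2}\Burn|^2 + \langle \nabla \bb^R_n(\burn,\burn)[\Burn], \Burn\rangle = 0.$$
Differentiating $u \mapsto \rho(|u|^2/R)\bb(u,u)$ gives
$$\nabla \bb^R_n(\burn,\burn)[\Burn] = \tfrac{2}{R}\rho'(|\burn|^2/R)\langle \burn,\Burn\rangle \Pi_n\bb(\burn,\burn) + \rho(|\burn|^2/R)\Pi_n\bigl(\bb(\Burn,\burn)+\bb(\burn,\Burn)\bigr),$$
and Assumption \ref{B}-\eqref{B-c} annihilates the $\bb(\burn,\Burn)$ contribution after pairing with $\Burn$. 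On the supports of $\rho$ and $\rho'$ the cutoff forces $|\burn|\le\sqrt{2R}$, so Assumption \ref{B}-\eqref{B-a} gives $\|\bb(\burn,\burn)\|_{\ve^\ast}\le 2C_0 R$ while Assumption \ref{B}-\eqref{B-b} gives $|\bb(\Burn,\burn)|\le C_1\sqrt{2R}\,\|\Burn\|$. Combined with $|\langle\burn,\Burn\rangle|\le\sqrt{2R}\,|\Burn|$ and the equivalence $\|\cdot\|\sim|\rA^{1/2}\cdot|$ from \eqref{Poincare}, one arrives at
$$|\langle \nabla \bb^R_n(\burn,\burn)[\Burn], \Burn\rangle| \le C_R\,|\Burn|\,|\rA^{1/2}\Burn|,$$
with $C_R$ depending only on $R$, $C_0$, $C_1$ and $\lambda_1$, and crucially independent of $n$.

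Young's inequality now absorbs half of the dissipation on the left and produces
$$\tfrac{d}{dt}|\Burn|^2 + \kappa|\rA^{1/2}\Burn|^2 \le \tfrac{C_R^2}{\kappa}|\Burn|^2,$$
whence Gronwall gives $|\Burn(t,x)|^2\le|x|^2 e^{C_R^2 t/\kappa}$; plugging this back and integrating in time delivers the $\kappa\int_0^t|\rA^{1/2}\Burn(s,x)|^2\,ds$ bound of the desired form. Since the whole computation is pathwise, taking expectations is immediate, and the supremum over $|x|\le 1$ together with a harmless relabelling of the constants yields \eqref{DER-GAL}. The main subtlety is securing $n$-independence of $C_R$: this is why one must use the $\ve^\ast$-continuity of $\bb$ coming from Assumption \ref{B}-\eqref{B-a},\eqref{B-b} (a brute-force estimate in $\h_n$ would introduce constants blowing up with $\lambda_n$), and why the cancellation in Assumption \ref{B}-\eqref{B-c} is indispensable to eliminate the one term that is not absorbable into the dissipation.
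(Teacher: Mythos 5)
Your proof is correct and follows essentially the same route as the paper: decompose the derivative of the cutoff nonlinearity, use the cutoff to obtain an $n$-independent bound $|\langle \nabla\bb^R_n(\burn,\burn)[\Burn],\Burn\rangle|\le C_R\,|\Burn|\,|\rA^{1/2}\Burn|$, then apply Young's inequality and Gronwall. One small inaccuracy in your closing commentary: the cancellation from Assumption \ref{B}-\eqref{B-c} is convenient but not indispensable here --- the paper bounds the term $\langle\bb(\burn,\Burn),\Burn\rangle$ directly via Assumption \ref{B}-\eqref{B-a}, since $\Vert\bb(\burn,\Burn)\Vert_{\ve^\ast}\le C_0\,|\burn|\,|\Burn|\le C_0\sqrt{2R}\,|\Burn|$ on the support of the cutoff, which pairs against $\Vert\Burn\Vert$ and is absorbed by Young's inequality just like the other contributions.
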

\begin{proof}
 For the sake of simplicity we will write $\Burn(\cdot):= \Burn(\cdot,
x)$. We will not dwell on the details of the existence of solution
since it can be proved with standard argument. We will just prove the estimate
\eqref{DER-GAL}.  For this purpose, we start with the following
identity
\begin{align*}
\nabla
\bb^R_n(\burn,\burn)[\Burn]=\rho^\prime_R(\burn)\frac{\langle
\burn, \Burn\rangle}{R \lvert \burn\rvert}
\bb(\burn,\burn)+\rho_R(\burn)[\bb(\burn,\Burn)+\bb(\Burn,\burn)].
\end{align*}
Hence, it follows from Assumption \ref{B}-\eqref{B-a} and the
definition of $\rho_R(\cdot)$ that
\begin{align*}
\Lve \nabla \bb^R_n(\burn,\burn)[\Burn]\Rve_{\ve^\ast}\le
\frac{1}{R}\lvert \rho^\prime(\burn)\rvert\lvert\Burn\rvert \Lve
\bb(\burn,\burn)\Rve_{\ve^\ast}+2 \rho_R(\burn) \Lve
\bb(\burn,\Burn)\Rve_{\ve^\ast}\\
\le \frac{C}{R}\lvert \rho^\prime(\burn)\rvert \,\,
\lvert\Burn\rvert\,\, \lvert \burn\rvert^2+ C \rho_R(\burn) \lvert
\burn \rvert \,\, \lvert \Burn\rvert\\
\le C_R \lvert \Burn\rvert.
\end{align*}
Therefore,
\begin{align}
\lvert \langle \nabla \bb^R_n(\burn,\burn)[\Burn],
\Burn\rangle\rvert\le C_R \lvert \Burn\rvert \,\,\rvert
\mathrm{A}^\frac12 \Burn \rvert\nonumber \\
\le C_R^2 \frac{2}{\kappa} \lvert \Burn \rvert^2+\frac{\kappa}{2}
\lvert \mathrm{A}^\frac12 \Burn\rvert^2.\label{DER-BB-GAL}
\end{align}
Now multiplying \eqref{Hydro-3} by $\Burn(t)$ and plugging
\eqref{DER-BB-GAL} in the resulting equation yields
\begin{align*}
\frac{1}{2}\frac{d}{dt}\lvert \Burn(t)\rvert^2 +\kappa \lvert
\mathrm{A}^\frac12 \Burn(t)\rvert^2 \le C_R^2 \frac{2}{\kappa}
\lvert \Burn(t) \rvert^2+\frac{\kappa}{2} \lvert \mathrm{A}^\frac12
\Burn(t)\rvert^2.
\end{align*}
Thus,
\begin{equation*}
\frac{d}{dt}\lvert \Burn(t)\rvert^2 +\kappa \lvert
\mathrm{A}^\frac12 \Burn(t)\rvert^2 \le C_R^2 \frac{4}{\kappa}
\lvert \Burn(t) \rvert^2,
\end{equation*}
from which along with the Gronwall inequality we infer that
\begin{equation*}
\lvert \Burn(t)\rvert^2 + \kappa \int_0^t \lvert
\mathrm{A}^\frac12 \Burn(s)\rvert^2 ds\le \lvert
\Burn(0)\rvert^2(1+C_R e^{\frac4{\kappa}t}).
\end{equation*}
We easily conclude the proof from this last inequality.
\end{proof}
 We
have the following lemma.
\begin{lem}
Suppose that all the assumptions of Proposition \ref{TSF-MSM} are verified.
Then, for any $R>0$, $t> 0$ there exists a positive constant
$C:=C(t,R)$ such that
\begin{equation}\label{SF-GAL}
\lvert \cprn \Phi(\xi)-\cprn \Phi(\zeta)\rvert< C \Lve \Phi\Rve_\infty
\lvert \xi-\zeta\rvert,
\end{equation}
for any $n\in \mathbb{N}$, $\xi, \zeta\in \h_n$ and $\Phi\in
B_b(\h_n)$.
\end{lem}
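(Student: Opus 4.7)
The plan is to deduce the gradient bound \eqref{SF-GAL} from the Bismut-Elworthy-Li type representation developed in Appendix \ref{SEC-BEL} (Lemma \ref{BEL}), applied to the finite-dimensional Galerkin system \eqref{Hydro-2}, coupled with the Jacobian estimate \eqref{DER-GAL} and the gradient estimate promised in Appendix \ref{SEC-GRAD-EST}. Since $\h_n$ is identified with $\mathbb{R}^n$, the coefficients of \eqref{Hydro-2} are $C^2$ (the cutoff $\rho(|\cdot|^2/R)$ renders the truncated nonlinearity $\bb^R_n$ globally Lipschitz and differentiable), and the noise acts via the non-degenerate directions $\{\beta_1 e_1,\dots,\beta_n e_n\}$ with common L\'evy measure $\nu$ satisfying Assumption \ref{Lev-Meas}\eqref{LMi}--\eqref{LMiii}. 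Hence the hypotheses of the BEL formula hold for the Galerkin SDE on $\h_n$.

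First I would invoke Lemma \ref{BEL} to obtain, for $\Phi \in B_b(\h_n)$, $\xi,x\in \h_n$ and $t>0$, a representation of the form
\begin{equation*}
 \langle \nabla_\xi [\cprn \Phi](\xi), x\rangle = \EE\bigl[\Phi(\burn(t,\xi))\, \CZ^R_n(t,x)\bigr],
\end{equation*}
where $\CZ^R_n(t,x)$ is an explicit stochastic weight built from the Jacobian flow $\Burn(\cdot,x)$ (solution of \eqref{Hydro-3}) and a suitable stochastic integral against the compensated Poisson measures $\tilde\eta_k$, with the factor $g'/g$ supplied by Assumption \ref{Lev-Meas}\eqref{LMi}. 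Taking absolute values and using $|\Phi| \le \Lve \Phi\Rve_\infty$ gives
\begin{equation*}
 \bigl| \langle \nabla_\xi [\cprn \Phi](\xi), x\rangle \bigr| \le \Lve \Phi\Rve_\infty\, \EE |\CZ^R_n(t,x)|.
\end{equation*}

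Next I would bound $\EE |\CZ^R_n(t,x)|$ using the gradient estimate from Appendix \ref{SEC-GRAD-EST}. The key inputs are (i) the Jacobian control \eqref{DER-GAL}, whose constant $(1+C_R e^{4t/\kappa})$ depends only on $R,t,\kappa$ and \emph{not} on $n$, and (ii) the order/non-degeneracy condition in Assumption \ref{Lev-Meas}\eqref{LMiii}, which provides the quantitative lower bound $\alpha$ needed to invert the Malliavin-type covariance matrix in the BEL derivation. Combined, these yield
\begin{equation*}
 \EE |\CZ^R_n(t,x)| \le C(R,t)\, |x|, \qquad x\in \h_n,
\end{equation*}
with $C(R,t)$ independent of $n$ and of $\xi$.

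Finally, given $\xi,\zeta\in\h_n$ I would apply the fundamental theorem of calculus along the segment $r\mapsto \xi+r(\zeta-\xi)$ to write
\begin{equation*}
 \cprn\Phi(\zeta)-\cprn\Phi(\xi) = \int_0^1 \langle \nabla_\eta[\cprn\Phi](\xi+r(\zeta-\xi)),\, \zeta-\xi\rangle\, dr,
\end{equation*}
and integrate the pointwise gradient bound to conclude \eqref{SF-GAL}. The main obstacle is the third step: one must verify that the stochastic weight arising from Lemma \ref{BEL} admits an $L^1$-bound that is genuinely uniform in the dimension $n$, since na\"{\i}ve estimates could introduce factors that grow with $n$ (e.g. through summation over the $n$ noise channels or through the covariance-inversion step). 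Uniformity is plausible because the Jacobian bound \eqref{DER-GAL} and the constant $\alpha$ in Assumption \ref{Lev-Meas}\eqref{LMiii} are both dimension-free, but it will require the careful $n$-independent construction of $\CZ^R_n$ carried out in Appendices \ref{SEC-BEL}--\ref{SEC-GRAD-EST}.
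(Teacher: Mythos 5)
Your overall strategy is the one the paper uses: the BEL formula of Lemma \ref{BEL} feeds the gradient estimate \eqref{GRAD-EST} of Lemma \ref{Lem-App-B} (with $\delta=\tfrac12$), the Jacobian bound \eqref{DER-GAL} controls the resulting factor $\EE\int_0^t\lvert \mathrm{A}^{1/2}\Burn(s,x)\rvert^2\,ds$ uniformly in $n$, and the remaining constant $\bigl(\sum_j \beta_j^{-2}\lambda_j^{-1}\bigr)^{1/2}$ is finite and dimension-free by Assumption \ref{Beta} (note $\beta_j^{-2}\lambda_j^{-1}=\lambda_j^{2\theta-1}$ with $\theta<\tfrac12$ and $\lambda_j$ growing geometrically), while the moments $\EE[\mathcal{A}(t)^{-2p}]$ are bounded using only the first noise channel. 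Your concern about $n$-uniformity is thus resolved exactly where you expect it to be.

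There is, however, one genuine gap: you invoke Lemma \ref{BEL} ``for $\Phi\in B_b(\h_n)$,'' but that lemma (and consequently the estimate \eqref{GRAD-EST}) is proved only for $\Phi\in C^2_b(\mathbb{R}^n)$ --- its derivation differentiates $u(s,x)=\EE[\Phi(\bx(t-s,x))]$ and applies It\^o's formula to it, which requires smoothness of $\Phi$. For a merely bounded measurable $\Phi$ the quantity $\nabla_\xi[\cprn\Phi](\xi)$ is not known to exist a priori, so your fundamental-theorem-of-calculus step along the segment from $\xi$ to $\zeta$ cannot be performed directly. The correct order of operations is to first establish \eqref{SF-GAL} for $\Phi\in C^2_b(\h_n)$ exactly as you describe, and then extend the Lipschitz estimate to all $\Phi\in B_b(\h_n)$ by the equivalence lemma of Peszat--Zabczyk (\cite[Lemma 2.2]{SP+JZ}), which says that a bound of the form $\lvert \CP_t\Phi(\xi)-\CP_t\Phi(\zeta)\rvert\le C\Lve\Phi\Rve_\infty\lvert\xi-\zeta\rvert$ valid on a sufficiently rich class of smooth test functions automatically propagates to bounded Borel functions. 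This is the final step of the paper's proof and it is not optional, since the statement you are proving is precisely about $B_b(\h_n)$.
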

\begin{proof}
The idea is to use the estimate for the gradient of  the Markovian
semigroup $\CP^R_{t,n}$. Let $\Phi\in C^2_b(\h_n)$ and $ \nabla_x
\cprn \Phi(\xi)$ be the derivative in the direction of $x\in \h_n$
at a point $\xi\in \h_n$ of $\cprn \Phi(\cdot)$. Notice that when
identifying $\h_n$ with $\mathbb{R}^n$ the linear operator
$\mathrm{A}^\delta$, $\delta \in [0,\infty)$, can be identified
with the diagonal matrix $[\mathrm{A}^\delta_{jk};
j,k=1,\ldots,n]$ defined by
\begin{equation*}
\mathrm{A}^\delta_{jk}=
\begin{cases}
\lambda_j^\delta\text{ if} j=k\\
0 \text{ otherwise}.
\end{cases}
\end{equation*} Thanks to the estimate \eqref{GRAD-EST} in Lemma \ref{Lem-App-B} we have
 \begin{equation*}
\sup_{\substack{n\in \mathbb{N}, \xi, x  \in \h_n\\  \lvert
x\rvert\le 1} } \lvert \nabla_x \cprn \Phi(\xi) \rvert \le C(t)
\left(\sum_{j=1}^\infty \beta_j^{-2} \lambda_j^{-1}
\right)^\frac12 \Lve \Phi\Rve_{\infty} \biggl[\EE\int_0^t \lvert
\mathrm{A}^\frac12 \Burn(s)\rvert^2 ds\biggr]^\frac12,
 \end{equation*}
 where we have used the shorthand notation $\Burn(\cdot):= \Burn(\cdot,
x)$.
 Owing to \eqref{DER-GAL} we obtain the following estimate
\begin{equation*}
 \sup_{\substack{n\in \mathbb{N}, \xi, x  \in \h_n\\  \lvert
x\rvert\le 1} } \lvert \nabla_x \cprn \Phi(\xi) \rvert \le C(t)
\left(\sum_{j=1}^\infty \beta_j^{-2} \lambda_j^{-1}
\right)^\frac12 \Lve \Phi\Rve_{\infty} (1+C_R
e^{\frac4{\kappa}t}).
\end{equation*}
Now we easily derive that  for any $R>0$, $t> 0$
there exists a constant $C:=C(t,R)>0$ such that
\begin{equation*}
\sup_{\substack{n\in \mathbb{N}, \xi, x  \in \h_n\\  \lvert
x\rvert\le 1} } \lvert \nabla_x \cprn \Phi(\xi) \rvert   \le
C(t,R)\Lve \Phi\Rve_\infty ,
\end{equation*}
for any $\Phi \in C^2_b(\h_n)$. Now we easily see that the
estimate \eqref{SF-GAL} holds for $\Phi\in C^2_b(\h_n)$. Owing to
the equivalence lemma \cite[Lemma 2.2]{SP+JZ} it follows that
\eqref{SF-GAL} also holds for $\Phi\in B_b(\h_n)$, and this
completes the proof of our claim.
\end{proof}

\subsection{Strong Feller property of the solution to \eqref{Hydro-1}} In
this section we will prove that for any $R>0$ the semigroup
$\CP^R_t$ associated to the solution $\bu^R$ of the modified
problem \eqref{Hydro-1} has the strong Feller property.
\begin{prop}
Suppose that all the assumptions of Proposition
\ref{TSF-MSM} are satisfied. Then, for any $R>0$, $t> 0$ there exists a
positive constant $C:=C(t,R)$ such that
\begin{equation}\label{SF-MSM}
\lvert \CP^R_t \Phi(\xi)-\CP^R_t  \Phi(\zeta)\rvert< C \Lve \Phi
\Rve_\infty \lvert \xi-\zeta\rvert,
\end{equation}
for any $\xi, \zeta\in \h$ and $\Phi\in B_b(\h)$.
\end{prop}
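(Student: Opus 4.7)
The plan is to lift the uniform-in-$n$ strong Feller estimate \eqref{SF-GAL} for the Galerkin semigroups $\cprn$ up to the infinite-dimensional semigroup $\CP^R_t$ by passing to the limit along the Galerkin scheme, and then to extend from bounded continuous to bounded measurable test functions via the equivalence lemma \cite[Lemma 2.2]{SP+JZ}.

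First I would fix $\Phi\in C_b(\h)$ and arbitrary $\xi,\zeta\in \h$. Writing $\Phi_n:=\Phi|_{\h_n}$ for the restriction of $\Phi$ to the finite-dimensional subspace $\h_n$ (so that $\Phi_n\in C_b(\h_n)$ with $\lVert\Phi_n\rVert_\infty\le \lVert\Phi\rVert_\infty$) and plugging the projected initial data $\Pi_n\xi,\Pi_n\zeta \in \h_n$ into \eqref{SF-GAL}, I obtain
\begin{equation*}
|\cprn\Phi_n(\Pi_n\xi) - \cprn\Phi_n(\Pi_n\zeta)| \le C(t,R)\,\lVert\Phi\rVert_\infty\,|\Pi_n\xi - \Pi_n\zeta| \le C(t,R)\,\lVert\Phi\rVert_\infty\,|\xi-\zeta|,
\end{equation*}
with the constant $C(t,R)$ independent of $n$, the second inequality being a consequence of the non-expansiveness of the orthogonal projection $\Pi_n$ on $\h$.

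Next I would pass to the limit $n\to\infty$. Since the Galerkin process $\burn(\cdot,\Pi_n\eta)$ takes values in $\h_n\subset \h$, one has $\cprn\Phi_n(\Pi_n\eta) = \EE[\Phi(\burn(t,\Pi_n\eta))]$ for every $\eta\in\h$. The convergence \eqref{CONV-R}, applied with initial datum $\eta$, yields $\burn(t,\Pi_n\eta)\to \bu^R(t,\eta)$ in $L^2(\Omega;\h)$ and hence in $\h$ in probability; by boundedness and continuity of $\Phi$, bounded convergence then delivers
\begin{equation*}
\EE[\Phi(\burn(t,\Pi_n\eta))] \longrightarrow \EE[\Phi(\bu^R(t,\eta))] = \CP^R_t\Phi(\eta).
\end{equation*}
Specializing to $\eta\in\{\xi,\zeta\}$ and letting $n\to\infty$ in the estimate of the previous paragraph establishes \eqref{SF-MSM} for every $\Phi\in C_b(\h)$.

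Finally, to remove the continuity hypothesis on $\Phi$, I would invoke the equivalence lemma \cite[Lemma 2.2]{SP+JZ}, exactly as the authors did in going from $C^2_b(\h_n)$ to $B_b(\h_n)$ in the proof of \eqref{SF-GAL}: an estimate of the form $|\CP^R_t\Phi(\xi)-\CP^R_t\Phi(\zeta)|\le C(t,R)\lVert\Phi\rVert_\infty|\xi-\zeta|$ valid on $C_b(\h)$ automatically propagates to all $\Phi\in B_b(\h)$ with the same constant, yielding \eqref{SF-MSM} in full generality. The whole argument is essentially a soft transfer, the genuine analytical content being the uniformity in $n$ of \eqref{SF-GAL} (which itself rests on the Bismut--Elworthy--Li formula of Appendix \ref{SEC-BEL}); the only point requiring some care in the present proof is the limit interchange for $\Phi\in C_b(\h)$, and this is routine thanks to the boundedness of $\Phi$ together with the $L^2(\Omega;\h)$ convergence of the Galerkin scheme recorded in \eqref{CONV-R}.
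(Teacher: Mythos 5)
Your proof is correct, and while the overall strategy (pass to the limit in the uniform-in-$n$ Galerkin estimate \eqref{SF-GAL}) is the same as the paper's, the limit-passage step is genuinely different. The paper works from the pathwise convergence \eqref{CONV-GAL} of $\burn$ to $\bu^R$ in $L^2(0,t;\h)$ $\mathbb{P}$-a.s.: it extracts a subsequence converging $dt\times d\mathbb{P}$-a.e., applies dominated convergence to get $\EE\int_0^t\lvert\Phi(\bu^R_{n_k}(s,\xi))-\Phi(\bu^R(s,\xi))\rvert\,ds\to 0$, extracts a further subsequence to obtain $\EE[\Phi(\bu^R_{n_k}(s,\xi))]\to\EE[\Phi(\bu^R(s,\xi))]$ for \emph{almost every} $s\in(0,t]$, and then must invoke the \cadlag property of $\bu^R$ (hence right-continuity of $s\mapsto \CP^R_s\Phi(\xi)$) to upgrade the resulting estimate from a.e.\ $s$ to all $s$. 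You instead use the fixed-time convergence $\EE\lvert\burn(t)-\bu^R(t)\rvert^2\to 0$ recorded in \eqref{CONV-R}, which gives $\EE[\Phi(\burn(t,\Pi_n\xi))]\to\CP^R_t\Phi(\xi)$ directly at the time $t$ of interest, so no subsequence extraction, no a.e.-in-$s$ detour and no \cadlag argument are needed; this is arguably the cleaner route, given that \eqref{CONV-R} is already asserted in the proof of Proposition \ref{MOD-SHELL}. Two further points in your favour: you correctly handle the mismatch of initial data via $\lvert\Pi_n\xi-\Pi_n\zeta\rvert\le\lvert\xi-\zeta\rvert$, and you make explicit the final passage from $C_b(\h)$ to $B_b(\h)$ via \cite[Lemma 2.2]{SP+JZ}, a step the paper carries out only at the Galerkin level and leaves implicit for the limit semigroup (its proof stops at $\Phi\in C^2_b(\h)$ even though the statement is for $B_b(\h)$).
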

\begin{proof}
Since, by \eqref{CONV-GAL}, $\burn$ converges to $\bu^R$ strongly
in $L^2(0,t;H)$ $\mathbb{P}$-a.s. we can infer the existence of a
subsequence $n_k$ such that
\begin{equation*}
\bu^R_{n_k}\rightarrow \bu^R \,\,\,\, dt\times
d\mathbb{P}-\text{almost everywhere}.
\end{equation*}
Thanks to this convergence, the continuity and the boundedness of
$\Phi$ we can derive from the Lebesgue Dominated Convergence
Theorem that as $n_k\rightarrow 0$
\begin{equation*}
\EE \int_0^t \lvert
\Phi(\bu^R_{n_k}(t,\xi))-\Phi(\bu^R(t,\xi))\rvert ds\rightarrow 0.
\end{equation*}
Hence there exists a subsequence, denoted again by $n_k$, such
that
\begin{equation*}
\EE [\Phi(\bu^R_{n_k}(s,\xi))] \rightarrow \EE [\Phi(\bu^R
(s,\xi))]
\end{equation*}
for almost all $s\in [0, t]$. Hence thanks to this last
convergence and \eqref{SF-GAL} there exists $I_0\subset (0,t]$
with $\mathrm{Leb}(I_0)=0$ such that for any $R>0$ and $s\in I_0$
we have
\begin{equation}\label{SF-AE}
\lvert \CP^R_s\Phi(\xi)-\CP^R_s\Phi(\zeta)\rvert\le C_{R,s} \Lve
\Phi\Rve_\infty \lvert\xi-\zeta\rvert,
\end{equation}
for any $\xi, \zeta\in \h$ and $\Phi \in C^2_b(\h)$. Since, by
Proposition \ref{MOD-SHELL}, $\bu^R$ is c\`adl\`ag in $\h$, the function $s\mapsto \CP^R_s \Phi(\xi)-\CP^R_s\Phi(\zeta)$, for any $\xi, \zeta\in \h$ and $\Phi \in C^2_b(\h)$, is also c\`adl\`ag and the estimate
\eqref{SF-AE} holds for all $s\in (0,t]$. This ends the proof of
our claim.
\end{proof}
\appendix
\section{Bismut-Elworthy-Li formula}\label{SEC-BEL}
In this first appendix we give and prove a Bismut-Elworthy-Li type
formula for stochastic differential equations driven by pure jump
noise. The proof is mainly a modification of \cite[Proof of
Theorem 1]{Takeuchi}. We will also follow closely the notation in
\cite{Takeuchi}.

Let $\eta:=(\eta_1, \ldots, \eta_n)$ be a Poisson random measure
on $\mathbb{R}^n$, where $\eta_1, \ldots, \eta_n$ are
$n$-independent Poisson random measure with L\'evy measures
$\nu_1, \ldots, \nu_n$ on
$\mathbb{R}_0:=\mathbb{R}\backslash\{0\}$. The L\'evy measure
of $\eta$ is denoted by
$\nu(dz):=(\nu_1(dz_1), \ldots, \nu_n(dz_n))$ . We use the symbol $$(\widehat{\eta}_1(z_1,t), \ldots,
\widehat{\eta}_n(z_n,t):=(\nu_1(dz_1)dt, \ldots, \nu_n(dz_n)dt),$$ to
denote the compensator of $\eta$. The symbol
$\tilde{\eta}=(\tilde{\eta_1},\ldots, \tilde{\eta_n})$ describes the
compensated Poisson random measures associated to $\eta$. To
shorten notation we will use the following shorthand notations
$d\eta(z,t):=\eta(dz,dt)$, $d \tilde{\eta}(z,t):=\tilde{\eta}(dz, dt)$ and
$d\nu(z)dt:=\nu(dz)dt$. We will also use the notation
$d\bar{\eta}(z,t):=(d\bar{\eta}_1(z_1,t),\ldots, d\bar{\eta}_n(z_n,t)$ where
$$d\bar{\eta}_j(z_j,t)=\mathds{1}_{\lvert z_j\rvert\le 1} d\tilde{\eta}_j(z_j,t)+
\mathds{1}_{\lvert z_j\rvert>1} d\eta_j(z_j,t).$$

For this appendix we impose the following sets of conditions.
\begin{assum}\label{Lev-Meas-bis}
\begin{enumerate}
\item \label{LMbi} For each $j$ there exists a $C^1$ function
$g_j: \mathbb{R} \rightarrow [0,\infty)$ such that
$\nu_j(dz_j)=g_j(z_j)dz_j$.

\item \label{LMbii} As $\lvert z\rvert \rightarrow \infty$ we have
$z^2 g_j(z)\rightarrow 0$ for each $j$. Also
$$ \int_{\mathbb{R}_0} \Big[\lvert z\rvert^q \mathds{1}_{(\lvert
z\rvert\le 1)} + \lvert z\rvert^q \mathds{1}_{(\lvert z\rvert>1
)}\Big]\nu_j (dz)<\infty,
$$ for any $q\ge 1$ and $j\in \{1,\ldots, n\}$.

\item \label{LMbiii}Furthermore, there exists a constant
$\alpha>0$ such that for any  $j\in \{1,\ldots,n\}$ and $y\in
\mathbb{R}$
$$ \liminf_{\eps\searrow 0}\eps^\alpha \int_{\mathbb{R}_0} (\lvert
z\cdot y/\eps\rvert^2\wedge 1)\nu_j(dz)>0.$$
\end{enumerate}
\end{assum}
\begin{assum}\label{DER}
Let $\alpha:\mathbb{R}^n \rightarrow \mathbb{R}^n$ be a nonlinear
map such that $\alpha(\cdot)$ belongs to $C^2_b(\mathbb{R}^n,
\mathbb{R}^n)$.
\end{assum}
Let $\gamma$ be the $n\times n$  diagonal matrix given by
\begin{equation*}
\gamma_{ij}(z)=
\begin{cases}
\beta_i z_i \text{ if } i=j,\\
0 \text{ otherwise,}
\end{cases}
\end{equation*}
where $\{\beta_i; i=1,\ldots, n\}$ is a family of positive
numbers.

 Let
$\bx(t,x):=(\bxi{1}(t,x), \ldots, \bxi{n}(t,x))$ be the unique
solution to the system of $n$ SDEs given by
\begin{equation*}
d\bx(t)=\mathbf{\alpha}(\bx(t)) dt + \int_{\mathbb{R}_0^n}
\gamma(z) d\bar{\eta}(z,t), \,\, \bx(0)=x\in \mathbb{R}^n
\end{equation*}
that is
\begin{equation}\label{SYS-P}
d\bxi{i}(t)=\alpha^{(i)}(\bx(t)) dt+\sum_{j=1}^n
\int_{\mathbb{R}_0}\gamma_{ij}(z_j) d\bar{\eta}_j(z_j,t),\,\,
\bxi{i}(0)=x_i \,\, i=1,\ldots,n.
\end{equation}
Note that for any $x\in \err^n$  the process $\bx(\cdot,x)$ is a Markov process. Thanks to Assumption \ref{DER}, it is proved in \cite{Takeuchi} that the map $\mathbb{R}^n\ni x \mapsto
\bx(t)$ has a $C^1$-modification and its Jacobi matrix
$U(t):=[U_{kj}(t); k,j\in \{1,\ldots,n\}]=[\frac{\partial
\bxi{j}(t)}{\partial x_k}; k,j\in \{1,\ldots,n\}]$ satisfies
\begin{equation*}
\frac{d }{d t}U(t) =\nabla_x \alpha(\bx(t,x))U(t), \,\,\,\,
U(0)=I_n
\end{equation*}

 Let $\Lambda(s,z)$ be the matrix defined by
\begin{equation*}
\Lambda_{kj}(s,z)=z_j^2 g_j(z_j) \beta^{-1}_j \frac{\partial
\bxi{j}(s,x)}{\partial x_k},
\end{equation*}
and $J(t):=(\ji{1}(t), \ldots, \ji{n}(t))$ be the vector defined by
\begin{equation*}
\ji{k}(t)=\sum_{j=1}^n \int_0^t \int_{\mathbb{R}_0}\frac1
{g_j(z_j)} \frac{\partial \Lambda_{kj}(s,z)}{\partial z_j}
d\tilde{\eta}_j(z_j,s).
\end{equation*}
For $t>0$ we set
\begin{equation*}
\ki{k}(t)=-2\sum_{j=1}^n \int_0^t \int_{\mathbb{R}_0} z_j
\beta^{-1}_j \frac{\partial \bxi{j}(s,x)}{\partial x_k} d\eta_j(z_j,s),
\end{equation*}
 and
  $$ \mathcal{A}(t)=\sum_{j=1}^n \int_0^t \int_{\mathbb{R}_0}
z_j^2 d\eta_j(z_j,s).$$
\begin{lem}\label{BEL}
 Let Assumption \ref{Lev-Meas-bis} and Assumption \ref{DER}
 hold. Then
\begin{align*}
\nabla_{x_k} \EE[ \Phi(\bx(t,x)]=
\EE\left[\Phi(\bx(t,x))\frac{\ki{k}(t)}{[\mathcal{A}(t)]^2}\right]-\EE\left[\Phi(\bx(t,x))
\frac{\ji{k}(t)}{\mathcal{A}(t)}\right],
\end{align*}
for any $\Phi\in C^2_b(\mathbb{R}^n)$, $x\in \mathbb{R}^n$ and
$k\in \{1,\ldots,n\}$.
\end{lem}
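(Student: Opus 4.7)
The plan is to implement the Bismut–Elworthy–Li program for pure-jump SDEs by trading the derivative on $\Phi$ for a stochastic weight, via a coordinate-wise integration by parts on the Poisson space together with an appropriate normalization by the quadratic variation $\mathcal{A}(t)$.

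First I would reduce everything to the Jacobian. Under Assumption \ref{DER} the map $x\mapsto \bx(t,x)$ is of class $C^1$ with Jacobian $U(t)$ satisfying the variational linear equation given just before the statement. For $\Phi\in C^2_b(\mathbb{R}^n)$ one obtains
\begin{equation*}
\nabla_{x_k}\EE[\Phi(\bx(t,x))]=\EE\bigl[\langle \nabla\Phi(\bx(t,x)),U^{(k)}(t)\rangle\bigr],
\end{equation*}
where $U^{(k)}(t)=(\partial \bxi{j}(t,x)/\partial x_k)_{j=1}^n$. The task is thus to rewrite this as $\EE[\Phi(\bx(t,x))\cdot W_k(t)]$ with the weight $W_k(t)=K^{(k)}(t)/\mathcal{A}(t)^2-J^{(k)}(t)/\mathcal{A}(t)$.

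The central step is a duality identity on the Poisson space. For a sufficiently regular predictable $\mathbb{R}^n$-valued integrand $h(s,z)=(h_1,\ldots,h_n)$, perturb each jump $z_j\mapsto z_j+\eps h_j$ and compute the $\eps$-derivative of $\EE[\Phi(\bx^\eps(t,x))]$ in two ways. Differentiating $\Phi(\bx)$ in the path direction generated by $h$ produces, via the chain rule, a weighted sum of $\nabla\Phi(\bx)$ against $h$ integrated against $\eta$. Alternatively, by the smoothness of the densities $g_j$ (Assumption \ref{Lev-Meas-bis}\eqref{LMbi}) the perturbed Poisson random measure is absolutely continuous with respect to the original one, with a Girsanov-type logarithmic derivative equal to $\frac{1}{g_j(z_j)}\partial_{z_j}(g_j h_j)$, producing the dual expression $\EE\bigl[\Phi(\bx(t,x))\sum_j\int_0^t\int \frac{1}{g_j}\partial_{z_j}(g_j h_j)\,d\tilde{\eta}_j\bigr]$. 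This is where modifications of \cite[Proof of Theorem 1]{Takeuchi} and the change-of-measure machinery of \cite{Ishikawa} come in, since our $n$-dimensional Lévy measure $\nu$ is a product measure rather than having a joint smooth density on $\mathbb{R}^n$.

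To match the direction $U^{(k)}(t)$ I would choose $h_j(s,z)=z_j\beta_j^{-1}\partial \bxi{j}(s,x)/\partial x_k$, which after the $z$-integration produces $U^{(k)}(t)$ multiplied by the random factor $\mathcal{A}(t)=\sum_j\int_0^t\int z_j^2\,d\eta_j$. Dividing by $\mathcal{A}(t)$ cancels this factor, and bringing $1/\mathcal{A}(t)$ inside the duality identity via Itô's formula for pure-jump semimartingales (with $\Delta(1/\mathcal{A})=-\Delta\mathcal{A}/(\mathcal{A}\mathcal{A}_{-})$ at jump times) produces two contributions: the $z$-derivative $\partial_{z_j}\Lambda_{kj}/g_j$ integrated against $\tilde{\eta}_j$ yields the term $J^{(k)}(t)/\mathcal{A}(t)$, while the jump correction of $\mathcal{A}^{-1}$ accounts for the factor $-2z_j\beta_j^{-1}\partial\bxi{j}/\partial x_k$ integrated against $\eta_j$ and divided by $\mathcal{A}^2$, which is exactly $K^{(k)}(t)/\mathcal{A}(t)^2$.

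The main obstacle will be integrability, specifically justifying the division by $\mathcal{A}(t)$ and $\mathcal{A}(t)^2$. This is where the order and non-degeneracy condition Assumption \ref{Lev-Meas-bis}\eqref{LMbiii} becomes essential: it yields a quantitative lower bound on the small-ball probability of $\mathcal{A}(t)$ and hence negative-moment estimates $\EE[\mathcal{A}(t)^{-p}]<\infty$, in tandem with the growth control on $g_j'/g_j$ and the moment condition \eqref{LMbii} used to bound $J^{(k)}$, $K^{(k)}$, and the Jacobian $U^{(k)}$ in $L^q$. A standard regularization argument (cutting off the small jumps, proving the identity there, and passing to the limit using the non-degeneracy) together with the equivalence lemma will finally deliver the formula for all $\Phi\in C^2_b(\mathbb{R}^n)$.
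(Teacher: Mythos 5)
Your overall architecture is the right one --- an integration by parts on the Poisson space in a direction chosen so that the induced perturbation of $\bx(t,x)$ reproduces the Jacobian column $U^{(k)}(t)$ multiplied by the random factor $\mathcal{A}(t)$, followed by a normalization, with Assumption \ref{Lev-Meas-bis}-\eqref{LMbiii} supplying the negative moments $\EE[\mathcal{A}(t)^{-p}]<\infty$. The paper reaches the same intermediate identity (its equation \eqref{eq-4}, i.e.\ $\EE[\Phi(\bx(t,x))\ji{k}(t)]=-\EE[\nabla_{x_k}\Phi(\bx(t,x))\,\mathcal{A}(t)]$) by a slightly different route: it works with $u(s,x)=\EE[\Phi(\bx(t-s,x))]$, the martingale representations \eqref{erste_zeile}--\eqref{zweit_zeile} in terms of the increment operators $\biz{j}u$, It\^o-isometry cross products, and an explicit integration by parts in $z_j$ against $\Lambda_{kj}=z_j^2g_j(z_j)\beta_j^{-1}\partial\bxi{j}/\partial x_k$. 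Up to that point your perturbation/Girsanov phrasing is an acceptable variant.

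The genuine gap is the normalization step, which is precisely where the weight $\ki{k}(t)/[\mathcal{A}(t)]^2-\ji{k}(t)/\mathcal{A}(t)$ comes from. You cannot ``divide by $\mathcal{A}(t)$'' inside the duality identity: $\mathcal{A}(t)$ is a terminal-time random variable correlated with everything else, so replacing the direction $h_j$ by $h_j/\mathcal{A}(t)$ makes the integrand anticipating and destroys the It\^o/compensator structure of the divergence. Your proposed substitute --- It\^o's formula for $1/\mathcal{A}(t)$ --- does not produce the stated weight: the jump of $\mathcal{A}^{-1}$ at a jump time $s$ is $-z_j^2/(\mathcal{A}(s)\mathcal{A}(s-))$, which involves $z_j^2$ and the \emph{running} values of $\mathcal{A}$, whereas $\ki{k}(t)/[\mathcal{A}(t)]^2$ involves $z_j$ to the \emph{first} power integrated against $\eta_j$ and the \emph{terminal} value $\mathcal{A}(t)^{-2}$; moreover $\mathcal{A}(0)=0$, so $1/\mathcal{A}$ is singular at the left endpoint. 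The paper's actual mechanism is an Esscher-type exponential tilting: define the martingale $\zl{t}$ and the measures $\mathbb{P}^\lambda$ under which $\nu_j$ is replaced by $e^{-\lambda z_j^2}\nu_j(dz_j)$, prove the analogue of \eqref{eq-4} under each $\mathbb{P}^\lambda$ with weight $\ji{k}_\lambda(t)=\ji{k}(t)-\lambda\ki{k}(t)$, and then integrate over $\lambda\in(0,\infty)$ using $\int_0^\infty e^{-\lambda a}\,d\lambda=a^{-1}$ and $\int_0^\infty\lambda e^{-\lambda a}\,d\lambda=a^{-2}$; this simultaneously removes the factor $\mathcal{A}(t)$, generates the $\ki{k}/\mathcal{A}^2$ term from the $\lambda$-linear part of $\ji{k}_\lambda$, and justifies pulling $\nabla_{x_k}$ outside the expectation. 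Without this (or an equivalent) device your argument does not close.
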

\begin{proof}
Our proof is mainly based on the arguments in \cite[Section 4,
Proof of Theorem 1]{Takeuchi} to which we refer for the omitted
details. Here we only dwell on the parts where our idea and the
arguments in \cite{Takeuchi} differ.

 Let $u(s,x):=
\EE[\Phi(\bx(t-s,x))\lvert \bx(0)=x]$ for any  $s\in [0,t]$ and
$\Phi \in C^2_b(\mathbb{R}_n)$.  Let $\gi{j}$ be the $j$-th column of
the matrix $\gamma$ and for any $y\in \mathbb{R}^n$ let
\begin{equation*}
\biz{j}f(y):=f(y+\gi{j}(z))-f(y).
\end{equation*}
Arguing as in \cite[Lemma 4.1]{Takeuchi} and \cite[Lemma 4.2]{Takeuchi} respectively, we can show that
\begin{align}\label{erste_zeile}
\Phi(\bx(t,x))=\EE[ \Phi(\bx(t,x))]+\sum_{j=1}^n \int_0^t
\int_{\mathbb{R}_0} \biz{j}
u(s, \bx(s-,x)) d\tilde{\eta}_j(z_j,s),
\end{align}
and
\begin{align}\label{zweit_zeile}
\nabla_{x_k} \Phi(\bx(t,x))=\EE [\nabla_{x_k}
\Phi(\bx(t,x))]+\sum_{j=1}^n \int_0^t \int_{\mathbb{R}_0}
\nabla_{x_k} \biz{j} u(s, \bx(s-,x)) d\tilde{\eta}_j(z_j,s),
\end{align}
for any $k\in \{1, \ldots, n\}$.

Hereafter we fix $k, j\in \{1,\ldots, n\}$.
Since, by Assumption \ref{Lev-Meas-bis}-\eqref{LMbii}, $ \int_{\mathbb{R}_0} z^2 \nu_j(dz_j)<\infty$ for any $j=1,\ldots,n$, we can use the same argument as in
\cite[Proof of Lemma 4.6]{Takeuchi} to prove that
\DEQSZ\label{mmmmh}
\\
\nonumber
 \EE \biggl[\sum_{j=1}^n \int_0^t \int_{\mathbb{R}_0}  \biz{j} u(s,
\bx(s-,x)) d\tilde{\eta}_j(z_j,s) \times \sum_{j=1}^n \int_0^t
\int_{\mathbb{R}_0} z_j^2 d\tilde{\eta}_j\biggr]=\sum_{j=1}^n
\EE\biggl[\Phi(\bx(t,x)) \int_0^t \int_{\mathbb{R}_0}
z_j^2 d\tilde{\eta}_j(z_j,s)\biggr].
\EEQSZ
In fact, if we replace  the right hand side (RHS) $\Phi(\bx(t,x))$ by the RHS of \eqref{erste_zeile} and
take into account that 
$$\EE\lk[ \, \EE[\Phi(\bx(t,x))] \, \int_0^t \int_{\mathbb{R}_0}
z_j^2 d\tilde{\eta}_j(z_j,s)\rk]=0,
$$
then \eqref{mmmmh} follows from the It\^o formula.
Similarly, we can use equation \eqref{zweit_zeile}, to show first
\begin{equation}\label{eq-1}
\begin{split}
 \EE \biggl[\sum_{j=1}^n \int_0^t \int_{\mathbb{R}_0} \nabla_{x_k} \biz{j} u(s,
 \bx(s-,x)) d\tilde{\eta}_j(z_j,s) \times \sum_{j=1}^n \int_0^t
 \int_{\mathbb{R}_0} z_j^2 d\tilde{\eta}_j(z_j,s)\biggr]
 \\ =\sum_{j=1}^n
 \EE\biggl[\nabla_{x_k} \Phi(\bx(t,x)) \int_0^t \int_{\mathbb{R}_0}
 z_j^2 d\tilde{\eta}_j(z_j,s)\biggr],
\end{split}
\end{equation}
and, secondly, by  It\^o's formula
\begin{equation} \label{eq-3}
\begin{split}
\EE \biggl[\sum_{j=1}^n\int_0^t \int_{\mathbb{R}_0} \nabla_{x_k}
\biz{j} u(s, \bx(s-,x)) d\tilde{\eta}_j(z_j,s) \times \sum_{j=1}^n
\int_0^t \int_{\mathbb{R}_0} z_j^2
d\tilde{\eta}_j(z_j,s)\biggr]\\ =\sum_{j=1}^n \EE \biggl[\int_0^t\int_{\mathbb{R}_0}
\nabla_{x_k} \biz{j}u(s,\bx(s-,x)) z_j^2 d\nu_j(z_j)ds\biggr].
\end{split}
\end{equation}
Hence, plugging this last identity, i.e.\ \eqref{eq-3}, in \eqref{eq-1}  yields
\DEQSZ
\label{eq-2}
\\
\nonumber
\EE\biggl[\nabla_{x_k} \Phi(\bx(t,x)) \times \sum_{j=1}^n \int_0^t
\int_{\mathbb{R}_0} z_j^2 d\tilde{\eta}_j(z_j,s)\biggr]=\sum_{j=1}^n\EE
\biggl[\int_0^t\int_{\mathbb{R}_0} \nabla_{x_k} \biz{j}u(s,\bx(s-,x)) z_j^2
d\nu_j(z_j)ds\biggr].
\EEQSZ
In the other hand, since
$u(s,\bx(s,x))=\EE[\Phi(\bx(t-s,y))\lvert y=X(s,x)]$, we easily deduce  from the Markov and the tower property of mathematical expectation that
$$ \EE u(s,\bx(s,x))=\EE\lk[  \EE[\Phi(\bx(t-s,y))\lvert  y=X(s,x)]\rk] =\EE\lk[\Phi(\bx(t,x)) \rk] .$$ Thus, we infer from Fubini's theorem 
that
\DEQS 
\lqq{ \sum_{j=1}^n \EE\int_{0}^{t} \int_{\mathbb{R}_0}u(s, \bx(s,x)) z_j^2 d\nu_j(z_j)ds}
\\
&=&  \sum_{j=1}^n \int_{0}^{t} \int_{\mathbb{R}_0}\EE\lk[ \, \EE[\Phi(\bx(t-s,y))\lvert y=\bx(s,x)] \rk]\, z_j^2 d\nu_j(z_j)ds
\\
&=& \,  \sum_{j=1}^n \int_{0}^{t} \int_{\mathbb{R}_0}\EE \Phi(\bx(t,x)) z_j^2 d\nu_j(z_j)ds\lvert 
\\
&
=&\EE\biggl[\Phi(\bx(t,x)) \times \sum_{j=1}^n \int_0^t
\int_{\mathbb{R}_0} z_j^2 d\nu_j(z_j)ds\biggr].
\EEQS
In the very same way we get 
\begin{equation*}
\EE\biggl[\nabla_{x_k} \Phi(\bx(t,x))\times \sum_{j=1}^n \int_0^t
\int_{\mathbb{R}_0} z_j^2 d\nu_j(z_j)ds \biggr]=\sum_{j=1}^n
\EE\biggl[\int_0^t\int_{\mathbb{R}_0} \nabla_{x_k} u(s,
\bx(s-,x))z_j^2 d\nu_j(z_j)ds\biggr].
\end{equation*}
Now, observe that we have 
\DEQSZ\label{obenhhh}
 \nabla_{x_k}\biz{j}u(s,\bx(s-,x))&= \nabla_{x_k}u(s,\bx(s-,x)+\gamma^{(j)}(z))-\nabla_{x_k}u(s,\bx(s-,x)),
\EEQSZ
and
\DEQSZ\label{oben1}
 \int_0^t\int_{\mathbb{R}_0}z^2_j d\eta_j(z_j,s)&= \int_0^t\int_{\mathbb{R}_0} z^2_j d\tilde{\eta}_j(z_j,s)+ \int_0^t\int_{\mathbb{R}_0} z^2_j d\nu_j(z_j)ds,
\EEQSZ
Therefore, using \eqref{oben1} in one hand yields
\DEQS
\lqq{
\EE\biggl[\nabla_{x_k} \Phi(\bx(t,x)) \times \sum_{j=1}^n \int_0^t
\int_{\mathbb{R}_0} z_j^2 d{\eta}_j(z_j,s)\biggr]}
\\
&=&
\EE\biggl[\nabla_{x_k} \Phi(\bx(t,x)) \times \sum_{j=1}^n \int_0^t
\int_{\mathbb{R}_0} z_j^2 d\tilde{\eta}_j(z_j,s)\biggr]+\EE\biggl[\nabla_{x_k} \Phi(\bx(t,x)) \times \sum_{j=1}^n \int_0^t
\int_{\mathbb{R}_0} z_j^2 d{\nu}_j(z_j,s)\biggr],
\EEQS
and in the other hand, using \eqref{obenhhh}
we derive from \eqref{eq-2}  that
\DEQS
\lqq{
\EE\biggl[\nabla_{x_k} \Phi(\bx(t,x)) \times \sum_{j=1}^n \int_0^t
\int_{\mathbb{R}_0} z_j^2 d{\eta}_j(z_j,s)\biggr]}
\\
&=& \sum_{j=1}^n\EE
\biggl[\int_0^t\int_{\mathbb{R}_0} \nabla_{x_k} \biz{j}u(s,\bx(s-,x)) z_j^2
d\nu_j(z_j)ds\biggr]+\EE\biggl[\nabla_{x_k} \Phi(\bx(t,x)) \times \sum_{j=1}^n \int_0^t
\int_{\mathbb{R}_0} z_j^2 d{\nu}_j(z_j,s)\biggr]
\\
&=& \sum_{j=1}^n
\EE\biggl[\int_0^t \int_{\mathbb{R}_0} \nabla_{x_k} u(s,
\bx(s-,x)+\gi{j}(z)) z_j^2 d\nu_j(z_j)ds\biggr].
\EEQS
That is,
\DEQSZ\label{eq-333}
\lqq{ \EE\biggl[\nabla_{x_k} \Phi(\bx(t,x)) \times \sum_{j=1}^n \int_0^t
\int_{\mathbb{R}_0} z_j^2 d{\eta}_j(z_j,s)\biggr]} &&
\\
&=&\sum_{j=1}^n
\EE\biggl[\int_0^t \int_{\mathbb{R}_0} \nabla_{x_k} u(s,
\bx(s-,x)+\gi{j}(z)) z_j^2 d\nu_j(z_j)ds\biggr].
\nonumber
\EEQSZ
Next, by the same argument as used to show formula \eqref{mmmmh} we obtain,
(see also  \cite[Proof of Lemma 4.6]{Takeuchi})
\DEQS
\lqq{ \EE[\Phi(\bx(t,x)) )\ji{k}(t)]}
&&
\\ &=&\EE\biggl[\sum_{j=1}^n \int_0^t
\int_{\mathbb{R}_0} \biz{j} u(s,\bx(s-,x)) d\tilde{\eta}_j(z_j,s) \times
\sum_{j=1}^n \int_0^t \int_{\mathbb{R}_0} \frac1 {g_j(z_j)}
\frac{\partial
\Lambda_{kj}(s,z)}{\partial z_j} d\tilde{\eta}_j(z_j,s)\biggr].
\EEQS
Continuing, and using the definition of $g_j$ we get
\DEQS
\lqq{ \EE[\Phi(\bx(t,x)) )\ji{k}(t)]}
&&
\\
&=&\sum_{j=1}^n \EE\biggl[\int_0^t \int_{\mathbb{R}_0} \biz{j} u(s,
\bx(s-,x)) \frac1 {g_j(z_j)} \frac{\partial
\Lambda_{kj}(s,z)}{\partial z_j}
d\nu_j(dz_j)ds\biggr]\\
&=&\sum_{j=1}^n \EE\biggl[ \int_0^t \int_{\mathbb{R}_0} \biz{j} u(s,
\bx(s-,x)) \frac{\partial \Lambda_{kj}(s,z)}{\partial z_j}
dz_jds\biggr],
\EEQS
%
By integration-by-parts, using the Assumption
\ref{Lev-Meas-bis}-\eqref{LMbii}, and recalling the definition of $\Lambda_{kj}$ we derive that
\begin{align}
\EE[\Phi(\bx(t,x))\ji{k}(t)]= -\sum_{j=1}^n \EE\biggl[ \int_0^t
\int_{\mathbb{R}_0} \frac{\partial}{\partial z_j} \biz{j}
u(s,\bx(s-,x)) \Lambda_{kj}
dz_j ds\biggr]\nonumber \\
=-\sum_{j=1}^n \EE\biggl[\int_0^t \int_{\mathbb{R}_0}
\frac{\partial}{\partial z_j} \biz{j} u(s,\bx(s-,x)) z_j^2
\beta^{-1}_j \frac{\partial
\bxi{j}(s-,x)}{\partial x_k} g_j(z_j) dz_jds\biggr]\nonumber \\
=-\sum_{j=1}^n \EE\biggl[\int_0^t \int_{\mathbb{R}_0}
\frac{\partial}{\partial z_j} \biz{j} u(s,\bx(s-,x)) z_j^2
\beta^{-1}_j \frac{\partial \bxi{j}(s-,x)}{\partial x_k}
d\nu_j(z_j)ds\biggr].\label{lastline}
\end{align}
 We have the following partial derivatives identities
\begin{align*}
\frac{\partial}{\partial z_j} \biz{j}u(s, \bx(s-,x))&= \frac{\partial }{\partial y_j}  u^{(j)}(s,y)\Big|_{y= \bx(s-,x)+\gamma^{(j)}(z)}
\; \frac{\partial \gamma^ {(j)}}{\partial z_j}= \beta_j\;  \frac{\partial }{\partial y_j}  u^{(j)}(s,y)\Big|_{y= \bx(s-,x)+\gamma^{(j)}(z)}
\end{align*}
and 
{
\begin{align*}
\nabla_{x_k}  u^{(j)}(s,\bx(s-,x)+\gamma^{(j)}(z))
&=\frac{\partial u^{(j)}(s,\bx(s-,x)+\gamma^{(j)}(z))
  }{\partial x_k}
\\   &=  \frac{\partial  }{\partial y_j} u^{(j)}(s,y)\Big|_{y=\bx(s-,x)+\gamma^{(j)}(z)}
  \;
  \frac{\partial \bxi{j}(s-,x)}{\partial x_k}.
\end{align*}
}
Hence,
\begin{align*}
\beta_j^{-1} \frac{\partial}{\partial z_j} \biz{j}u(s, \bx(s-,x)) \frac{\partial}{\partial x_k}
\bxi{j}(s,x)&= \nabla_{x_k}  u^{(j)}(s,\bx(s-,x)+\gamma^{(j)}(z)).
\end{align*}
Using this identity in  \eqref{lastline}
implies
\begin{align*}
\EE[\Phi(\bx(t,x) \ji{k}(t)]= -\sum_{j=1}^n \EE\biggl[\int_0^t
\int_{\mathbb{R}_0} \nabla_{x_k} 
u(s,\bx(s-,x)+\gamma^{(j)}(z)) z_j^2
d\nu_j(z_j)ds\biggr],
\end{align*}
from which along with the identity
\eqref{eq-333} 
we derive that
\begin{align}
\EE[\Phi(\bx(t,x) \ji{k}(t)]= -\EE\Big[\nabla_{x_k} \Phi(\bx(t,x))\times \sum_{j=1}^n \int_0^t
\int_{\mathbb{R}_0} z_j^2 d\eta_j(z_j,s)\Big].\label{eq-4}
\end{align}
The next task is to get rid of $\int_0^t
\int_{\mathbb{R}_0} z_j^2 d\eta_j(z_j,s)$ and take $\nabla_{x_k}$ out of the mathematical expectation in the formula above.
To this end, for any $\lambda>0$ let
\DEQS
\zl{t} &=&\exp\biggl(-\sum_{j=1}^n \int_0^t \int_{\mathbb{R}_0}
\lambda z^2_j d\eta_j(z_j,s) -\sum_{j=1}^n \int_0^t \int_{\mathbb{R}_0}
\left(e^{-\lambda z^2_j}-1\right) d\nu_j(z_j)ds\biggr)
\\
&=& \exp\biggl(-\mathcal{A}(t) -\sum_{j=1}^n \int_0^t \int_{\mathbb{R}_0}
\left(e^{-\lambda z^2_j}-1\right) d\nu_j(z_j)ds\biggr).
\EEQS
Thanks to \cite[Theorem 1.4 and Remark 1.2]{Ishikawa} the process $[0,\infty)\ni t\mapsto\zl{t}$, 
is a martingale and one can define a new probability
$\mathbb{P}^\lambda$ on $(\Omega, \mathcal{F})$ such that $$
\frac{d
\mathbb{P}^\lambda}{d\mathbb{P}}{\biggl\vert_{\mathcal{F}_t}}=\zl{t}.$$
Moreover, under $\mathbb{P}^\lambda$ the random measure $\eta_j$,
$j\in \{1,\ldots, n\}$, is a Poisson random measure with L\'evy
measure
$$ \nu^\lambda_j(dz_j)=e^{-\lambda z_j^2}g_j(z_j) dz_j=e^{-\lambda
z_j^2} \nu_j(dz_j),$$ and $d\ekl_j:=d\eta_j-d\efl_j$, where
$d\efl_j(z_j)dt=\nu^\lambda_j(dz_j)dt$, is a martingale measure. The
solution under $\mathbb{P}$ to the system \eqref{SYS-P}  has,
under $\mathbb{P}^\lambda$, the same law as the solution
$\bx:=(\bxi{1},\ldots, \bxi{n})$  of the following system
\begin{equation}\label{SYS-LP}
\begin{split}
d\bxi{i}(t,x)=&\alpha^{(i)}(X(t,x))dt +\sum_{j=1}^n
\int_{\mathbb{R}_0} \gamma_{ij}(z) (e^{-\lambda
z_j^2}-1)d\nu_j(z_j)dt+ \sum_{j=1}^n
\int_{\mathbb{R}_0} \gamma_{ij}(z) d\ebl_j(z_j,t),\\ \bxi{i}(0)=& x_i,\qquad
i= 1,\ldots, n.
\end{split}
\end{equation}
Arguing as in the proof of \eqref{eq-4} we can show that
\begin{equation*}
\nabla_{x_k}\EE^\lambda \Big[\Phi(\bx(t,x) \times \sum_{j=1}^n
\int_0^t \int_{\mathbb{R}_0} z_j^2 d\eta_j(z_j,s)\Big]=-\EE^\lambda
[\Phi(\bx(t,x)\ji{k}_\lambda(t)],
\end{equation*}
where
\begin{equation*}
\ji{k}_\lambda(t):=\sum_{j=1}^n \int_0^t \int_{\mathbb{R}_0}
\frac{1}{e^{-\lambda z_j^2} g_j(z_j)}
\frac{\partial\Lambda^\lambda_{kj}(s,z)}{\partial z_j}  d\ekl_j(z_j,s),
\end{equation*}
and $$ \Lambda^\lambda_{kj}=e^{-\lambda z_j^2} z_j^2 \beta^{-1}_j
\frac{\partial \bxi{j}(s,x)}{\partial x_k}.$$ For the sake of
simplicity, let us set
\begin{align*}
N_\lambda(t)=\sum_{j=1}^n \int_0^t \int_{\mathbb{R}_0}
(e^{-\lambda z_j^2}-1) d\nu_j(z_j)ds.
\end{align*}
Note that
\begin{align*}
\int_0^\infty e^{N_\lambda(t)} \EE^\lambda\left[\Phi(\bx(t,x))
\ji{k}_\lambda(t)\right] d\lambda=\int_0^\infty e^{N_\lambda(t)}
\EE \left[\zl{t} \Phi(\bx(t,x)) \ji{k}_\lambda(t)\right]d\lambda\\
=\int_0^\infty \EE\left[e^{-\lambda \mathcal{A}(t)} \Phi(\bx(t,x))
\ji{k}_\lambda(t)\right] d\lambda.
\end{align*}
 Since $ d\ekl_j(z_j,s)=d\tilde{\eta}_j(z_j,s)+(1-e^{\lambda
z_j^2}) \nu_j(z_j)ds$, we can use integration-by-parts and
Assumption \ref{Lev-Meas}-\eqref{LMbii} to show that $
\ji{k}_\lambda(t)=\ji{k}(t)-\lambda \ki{k}(t).$ Hence
\begin{align*}
\int_0^\infty e^{N_\lambda(t)} \EE^\lambda\left [\Phi(\bx(t,x))
\ji{k}_\lambda(t)\right] d\lambda= \int_0^\infty
\EE\left[e^{-\lambda \mathcal{A}(t)} \Phi(\bx(t,x))
(\ji{k}(t)-\lambda \ki{k}(t)) \right]d\lambda.
\end{align*}
Thanks to Fubini's theorem we infer that
\begin{align*}
\int_0^\infty e^{N_\lambda(t)} \EE^\lambda\left [\Phi(\bx(t,x))
\ji{k}_\lambda(t)\right] d\lambda= \EE \biggl[\int_0^\infty
e^{-\lambda \mathcal{A}(t)} \Phi(\bx(t,x)) (\ji{k}(t)-\lambda
\ki{k}(t))
d\lambda\biggr]\\
=\EE\left[\Phi(\bx(t,x))
\frac{\ji{k}(t)}{\mathcal{A}(t)}\right]-\EE\left[\Phi(\bx(t,x))\frac{\ki{k}(t)}{[\mathcal{A}(t)]^2}\right].
\end{align*}
But, as in \cite[Proof of Theorem 1]{Takeuchi} the following identity holds
\begin{equation*}
\nabla_{x_k} \EE [\Phi(\bx(t,x)]=\nabla_{x_k}
\EE\left[\Phi(\bx(t,x))
\frac{\mathcal{A}(t)}{\mathcal{A}(t)}\right]=\int_0^\infty
\nabla_{x_k} \EE[\Phi(\bx(t,x)) \mathcal{A}(t) \zl{t}]e^{N_\lambda(t)}
d\lambda,
\end{equation*}
from which we infer that
\begin{align*}
\nabla_{x_k} \EE [\Phi(\bx(t,x)]=\int_0^\infty \nabla_{x_k}
\EE^\lambda\left[\Phi(\bx(t,x))\mathcal{A}(t)\right]
e^{N_\lambda(t)}
d\lambda\\
=-\int_0^\infty e^{N_\lambda(t)} \EE^\lambda[\Phi(\bx(t,x))
\ji{k}_\lambda(t)] d\lambda.
\end{align*}
Therefore,
\begin{align*}
\nabla_{x_k} \EE[
\Phi(\bx(t,x)]=\EE\left[\Phi(\bx(t,x))\frac{\ki{k}(t)}{[\mathcal{A}(t)]^2}\right]-\EE\left[\Phi(\bx(t,x))
\frac{\ji{k}(t)}{\mathcal{A}(t)}\right],
\end{align*}
for any $k\in \{1,\ldots,n\}$. This completes the proof of our
lemma.
\end{proof}
\section{Estimates of $\nabla_x
\EE[\Phi(\bx(t,x)]$}\label{SEC-GRAD-EST} In this section we will
derive estimates for the gradient of the Markov semigroup
$\EE[\Phi(\bx(t,x)]$. Let $\{\lambda_j; j=1,2,\ldots\}$ be a
sequence of positive numbers, $\delta \in [0,\frac12]$ and
$\mathrm{A}^\delta$ be the matrix defined by
\begin{equation*}
\mathrm{A}^\delta_{jk}=
\begin{cases}
\lambda_j^\delta \text{ if } j=k\\
0 \text{ otherwise}.
\end{cases}
\end{equation*}
\begin{lem}\label{Lem-App-B}
Assume that Assumption \ref{Lev-Meas} and Assumption \ref{DER} are verified instead. Then,
for any $t>0$ there exists a constant $C=C(t)$ such that
\begin{equation}\label{GRAD-EST}
\lvert \nabla_x \EE [\Phi(\bx(t,x))]\rvert\le C(t)
\left(\sum_{j=1}^n \beta_j^{-2}
\lambda_j^{-2\delta}\right)^\frac12
 \lvert \Phi\rvert_\infty \biggl[\EE \int_0^t
\lvert \mathrm{A}^\delta \nabla_x \bx(s,x) \rvert^2 ds
\biggr]^\frac12,
\end{equation}
 for any
$x\in \mathbb{R}^n$ and $\Phi  \in C^2_b(\mathbb{R}^n)$.
\end{lem}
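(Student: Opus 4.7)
The plan is to apply the Bismut-Elworthy-Li formula from Lemma \ref{BEL}, which gives
\begin{equation*}
\nabla_{x_k}\EE[\Phi(\bx(t,x))]=\EE\biggl[\Phi(\bx(t,x))\frac{\ki{k}(t)}{[\mathcal{A}(t)]^2}\biggr]-\EE\biggl[\Phi(\bx(t,x))\frac{\ji{k}(t)}{\mathcal{A}(t)}\biggr],
\end{equation*}
bound each of the two expectations by $\Lve\Phi\Rve_\infty$ times a Cauchy-Schwarz product of an $L^2$-norm of the stochastic weight ($\ji{k}(t)$ or $\ki{k}(t)$) and an $L^2$-norm of $\mathcal{A}(t)^{-1}$ (respectively $\mathcal{A}(t)^{-2}$), and then sum the squared componentwise bounds over $k$ to recover $\lvert\nabla_x\EE[\Phi(\bx(t,x))]\rvert^2$.

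The task then splits into three ingredients. First I would establish negative moment bounds $\EE[\mathcal{A}(t)^{-2p}]\le C(t)$ for $p=1,2$. Since $\mathcal{A}(t)$ is positive, I would use $\mathcal{A}(t)^{-q}=\Gamma(q)^{-1}\int_0^\infty s^{q-1}e^{-s\mathcal{A}(t)}ds$ combined with the L\'evy-Khintchine identity
\begin{equation*}
\EE e^{-s\mathcal{A}(t)}=\exp\biggl(-t\sum_{j=1}^n\int_{\mathbb{R}_0}(1-e^{-sz_j^2})\nu_j(dz_j)\biggr),
\end{equation*}
and invoke Assumption \ref{Lev-Meas}-\eqref{LMiii} to quantify the growth of the exponent as $s\to\infty$ and thereby force the integrability of $s^{q-1}\EE e^{-s\mathcal{A}(t)}$. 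This step is the main obstacle, mirroring the corresponding estimate in \cite{Takeuchi, Ishikawa}. Second, by It\^o's isometry and the identity $\partial_{z_j}\Lambda_{kj}=\beta_j^{-1}(\partial_{x_k}\bxi{j}(s,x))(2z_j g_j(z_j)+z_j^2 g_j'(z_j))$, I would obtain
\begin{equation*}
\EE\lvert\ji{k}(t)\rvert^2=\sum_{j=1}^n\beta_j^{-2}c_j\,\EE\int_0^t\lvert\partial_{x_k}\bxi{j}(s,x)\rvert^2 ds,
\end{equation*}
with $c_j:=\int_{\mathbb{R}_0}(2z_j+z_j^2 g_j'(z_j)/g_j(z_j))^2\nu_j(dz_j)$ finite by Assumption \ref{Lev-Meas}-\eqref{LMi}-\eqref{LMii}. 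Third, splitting $d\eta_j=d\tilde{\eta}_j+d\nu_j ds$ in the definition of $\ki{k}(t)$ and applying It\^o's isometry to the martingale part together with Cauchy-Schwarz to the drift yields a bound of the same form for $\EE\lvert\ki{k}(t)\rvert^2$, thanks to the finiteness of $\int(z^2+\lvert z\rvert)\nu_j(dz)$.

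To assemble the final estimate, I would sum the pointwise bounds over $k$ and use the elementary inequality $\sum_j a_j b_j\le(\sum_j a_j)(\sum_j b_j)$, valid for nonnegative sequences, with $a_j=\beta_j^{-2}\lambda_j^{-2\delta}$ and $b_j=\lambda_j^{2\delta}\sum_k\lvert\partial_{x_k}\bxi{j}(s,x)\rvert^2$. This factors the prefactor $\sum_j\beta_j^{-2}\lambda_j^{-2\delta}$ out of $\sum_j\beta_j^{-2}\sum_k\lvert\partial_{x_k}\bxi{j}(s,x)\rvert^2$, leaving the quantity $\lvert\mathrm{A}^\delta\nabla_x\bx(s,x)\rvert^2=\sum_{j,k}\lambda_j^{2\delta}\lvert\partial_{x_k}\bxi{j}(s,x)\rvert^2$. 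Combining this with the negative moment bound from the first step and integrating in $s$ produces \eqref{GRAD-EST} for $\Phi\in C^2_b(\mathbb{R}^n)$; the extension to $\Phi\in B_b(\mathbb{R}^n)$ follows from the equivalence lemma \cite[Lemma 2.2]{SP+JZ} used elsewhere in the paper.
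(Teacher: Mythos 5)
Your proposal is correct and follows essentially the same route as the paper: Cauchy--Schwarz applied to the two terms of the BEL formula, It\^o isometry plus Assumption \ref{Lev-Meas}-\eqref{LMi}--\eqref{LMii} for $\EE\lvert \ji{k}(t)\rvert^2$ and $\EE\lvert \ki{k}(t)\rvert^2$ (with the same martingale/drift splitting of $\ki{k}$), and a weighted Cauchy--Schwarz in $j$ to factor out $\sum_j\beta_j^{-2}\lambda_j^{-2\delta}$ and produce $\lvert\mathrm{A}^\delta\nabla_x\bx\rvert^2$. The only difference is that you prove the negative moment bound $\EE[\mathcal{A}(t)^{-2p}]<\infty$ directly via the Gamma-integral representation and the L\'evy--Khintchine formula together with Assumption \ref{Lev-Meas}-\eqref{LMiii}, whereas the paper reduces to a single component and cites \cite[Remark 3.2]{Takeuchi} for the same fact.
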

\begin{proof}
 For $p=1,2$ and $\delta \in
[0,\frac12]$ let
$$ C_p(t)=\EE\left( \frac{1}{[\mathcal{A}(t)]^{2p}}\right),$$
where
  $$ \mathcal{A}(t)=\sum_{j=1}^n \int_0^t \int_{\mathbb{R}_0}
z_j^2 d\eta_j(z_j,s),$$
and $$C_n=\sum_{j=1}^n \beta_j^{-2} \lambda_j^{-2\delta},
$$

Before we proceed to the proof we should note that owing to Assumption \ref{Lev-Meas} and Assumption \ref{DER} we have that
\begin{equation}\label{rem-kely}
\EE\Big[\sup_{s\in [0,t]} \left(\lvert \bx(s,x) \rvert^2 + \lvert A^\delta \nabla _x \bx(s,x)\rvert^2 \right)\Big]<\infty.
\end{equation}

 From the identity in Lemma \ref{BEL} we drive that
\begin{align}
\lvert \nabla_x \EE[\Phi(\bx(t,x))]\rvert^2 \le \lve
\Phi\rve_\infty^2  \biggl( C_2(t) \sum_{k=1}^n \EE [\ki{k}(t)]^2
+C_1(t) \sum_{k=1}^n \EE [\ji{k}(t)]^2 \biggr).\label{COR-BEL}
\end{align}

Let us first estimate the term  $\sum_{k=1}\EE \lvert \ji{k}(t)\rvert^2.$
From discrete Cauchy-Schwarz inequality and It\^o's isometry we
derive that
\begin{align*}
\EE \lvert \ji{k}(t)\rvert^2\le C \sum_{j=1}^n
\beta_j^{-2}\lambda_j^{-2\delta} \sum_{j=1}
\EE\biggl(\int_0^t\int_{\mathbb{R}_0} \lambda_j^{\delta}
\frac{1}{g_j(z_j)} \frac{\partial}{\partial z_j} [z_j^2g_j(z_j)]
\frac{\partial \bxi{j}(s)}{\partial x_k} d\tilde{\eta}_j(z_j,s)
\biggr)^2\\
\le C C_n \sum_{j=1}^n \EE\int_0^t \int_{\mathbb{R}_0}
\left[\frac{1}{g_j(z_j)} \frac{\partial}{\partial z_j}
(z_j^2g_j(z_j))\right]^2 \lambda_j^{2\delta} \left\lvert
\frac{\partial \bxi{j}(s)}{\partial x_k}\right\rvert^2 \nu(dz_j)
ds\\
\le C C_n \int_{\mathbb{R}_0} \left[\frac{1}{g(z)} \frac{d}{dz}
(z^2g(z))\right]^2 \nu(dz) \sum_{j=1}\int_0^t \lambda_j^{2\delta}
\left \lvert \frac{\partial \bxi{j}(s)}{\partial
x_k}\right\rvert^2 ds.
\end{align*}
Since, by Assumption \ref{Lev-Meas}-\eqref{LMi},
\begin{align*}
\left(\frac{1}{g(z)}\frac{d}{dz}[g(z)z^2]\right)^2=
\left(\frac{g^\prime(z)}{g(z)}z^2+2z\right)^2\\
\le 2 z^4 \left\lvert\frac{g^\prime(z)}{g(z)}\right\rvert^2+4
z^2\\
\le c \lvert z\rvert^2+ c\lvert z\rvert^4,
\end{align*}
and $\int_{\mathbb{R}_0} \lvert z\rvert^p\nu(dz)<\infty$ for any
$p\ge 2$ we infer that there exists $C>0$ such that
\begin{equation}\label{eq-6}
\sum_{k=1}^n \EE \lvert \ji{k}(t)\rvert^2 \le C C_n \sum_{j,k=1}^n
\EE\int_0^t \lambda_j^{2\delta}\biggl\vert \frac{\partial
\bxi{j}(s)}{\partial x_k}\biggr\vert^2 ds.
\end{equation}
Now, we treat the term involving $K^k(t)$. Owing to Assumption \ref{Lev-Meas}-\eqref{LMii} and \eqref{rem-kely} we can write
 \begin{align*}
 \ki{k}(t)=&-2 \sum_{j=1}^n \int_0^t \int_{\mathbb{R}_0} z_j
 \beta_j^{-1} \frac{\partial \bxi{j}(s)}{\partial x_k}
 d\tilde{\eta}_j(z_j,s) -2\sum_{j=1}^n \int_0^t\int_{\mathbb{R}_0} z_j
 \beta_j^{-1} \frac{\partial \bxi{j}(s)}{\partial x_k}
 \nu_j(dz_j)ds\\
 =&-2 \sum_{j=1}^n \int_0^t \int_{\mathbb{R}_0} z_j \beta_j^{-1}
 \frac{\partial \bxi{j}(s)}{\partial x_k} d\tilde{\eta}_j(z_j,s)
 -2\int_{\mathbb{R}_0} z\nu(dz) \sum_{j=1}^n \int_0^t \beta_j^{-1}
 \frac{\partial \bxi{j}(s)}{\partial x_k} ds\\
 =:&\ki{k}_1(t)+\ki{k}_2(t).
 \end{align*}
 Hence $$ \EE [ \ki{k}(t)]^2\le 2\EE \lvert \ki{k}_1(t)\rvert^2+ 2
 \EE \lvert \ki{k}_2(t) \rvert^2.$$ Let us first deal with $\EE
 \lvert \ki{k}_1(t)\rvert^2$. Using the discrete Cauchy-Schwarz
 inequality and It\^o's isometry we obtain
 \begin{align*}
 \EE \lvert \ki{k}_1(t)\rvert^2\le C \sum_{j=1}^n \beta_j^{-2}
 \lambda_j^{-2\delta} \times \sum_{j=1}^n \EE \biggl(\int_0^t
 \int_{\mathbb{R}_0} z_j \lambda_j^{2\delta} \frac{\partial
 \bxi{j}(s)}{\partial x_k} d\tilde{\eta_j}(z_j,s)\biggr)^2\\
 \le C C_n \sum_{j=1}^n \EE\biggl[\int_0^t\int_{\mathbb{R}_0} z_j^2
 \lambda_j^{2\delta} \left \lvert \frac{\partial
 \bxi{j}(s)}{\partial
 x_k}\right \rvert^2 \nu_j(dz_j) ds\biggr]\\
 \le C C_n \int_{\mathbb{R}_0} z^2 \nu(dz) \sum_{j=1}^n
 \EE\biggl[\int_0^t \lambda_j^{2\delta} \left \lvert \frac{\partial
 \bxi{j}(s)}{\partial x_k}\right \rvert^2 ds\biggr].
 \end{align*}
 Owing to the discrete Cauchy-Schwarz inequality we have
 \begin{align*}
 \EE \lvert \ki{k}_2(t) \rvert^2\le \biggl(\int_{\mathbb{R}_0}
  z\nu(dz)\biggr)^2 \sum_{j=1}^n\beta_j^{-2}
 \lambda_j^{-2\delta} \times \sum_{j=1} \EE\biggl(\int_0^t
 \lambda_j^\delta \frac{\partial \bxi{j}(s)}{\partial x_k}
 ds\biggr)^2\\
 \le \biggl(\int_{\mathbb{R}_0}  z \nu(dz)\biggr)^2
 \sum_{j=1}^n\beta_j^{-2} \lambda_j^{-2\delta} \times t
 \sum_{j=1}^n \EE\biggl[\int_0^t \lambda_j^{2\delta} \left\lvert
 \frac{\partial \bxi{j}(s)}{\partial x_k}\right\rvert^2 ds\biggr].
 \end{align*}
Hence there exists $C>0$ such that
\begin{equation}\label{eq-5}
\sum_{k=1}^n\EE [ \ki{k}(t)]^2\le C C_n (1+t)\sum_{j,k=1}^n
\EE\biggl[\int_0^t \lambda_j^{2\delta} \left\lvert \frac{\partial
\bxi{j}(s)}{\partial x_k}\right\rvert^2 ds\biggr].
\end{equation}
One can derive from \eqref{eq-5} and \eqref{eq-6} that
\begin{align*}
\sum_{k=1}^n \EE \lvert \ki{k}(t)\rvert^2 \le C C_n (1+t)
\EE\int_0^t
\lvert \mathrm{A}^\delta \nabla_x \bx(s)\rvert^2 ds,\\
\sum_{k=1}^n \EE \lvert \ji{k}(t)\rvert^2 \le C C_n \EE\int_0^t
\lvert \mathrm{A}^\delta \nabla_x \bx(s)\rvert^2 ds.
\end{align*}
Hence, by plugging these last identities in \eqref{COR-BEL} we
deduce that there exists $C$ such that
\begin{equation}
\lvert \nabla_x \EE [\Phi(\bx(t,x))]\rvert\le C C_n^\frac12
 \lvert \Phi\rvert_\infty \biggl[\EE \int_0^t
\lvert \mathrm{A}^\delta \nabla_x \bx(s,x) \rvert^2 ds
\biggr]^\frac12 (C^\frac12_2(t)(1+t)^\frac12 +C^\frac12_1(t))
\end{equation}
Now it remains to prove that $C_p(t)=\EE
\mathcal{A}(t)^{-2p}$ is finite. Since the measures
$\eta_j$, $j=1,2,\ldots$ are positive on $\mathbb{R}_0$ and $z_j^2>0$ we easily see
that
$$ \mathcal{A}(t)\ge \int_0^t\int_{\mathbb{R}_0} z_1^2 d\eta_1(z_1,s).$$
Thus, by Assumption \ref{Lev-Meas}-\eqref{LMiii} and \cite[Remark
3.2]{Takeuchi} it follows that
$$
\EE \mathcal{A}(t)^ {-2p} \le C\,\lk( t^ {-2p} + t^ {-\frac {4p}{\alpha}}\rk),
$$
and therefore,
$$ C_p(t)\le  C\,\lk( t^ {-2p} + t^ {-\frac {4p}{\alpha}}\rk)<\infty. $$
This ends the proof of our lemma.
\end{proof}
\section{Proof that $\burn$ converges to $\bu^R$ strongly in
$L^2(0,T;\h)$}\label{APP-CONV} In this section we are aiming to
prove that the Galerkin solution $\burn$ to \eqref{Hydro-2}
converges to the solution $\bu^R$ of \eqref{Hydro-1}. To do so we
consider the following system of finite dimensional differential
equations
\begin{equation}\label{Eq-7}
\begin{split}
\frac{d}{d t}\bv^R_n(t) +\mathrm{A}\bv^R_n(t)+\rho(\lvert
\bv^R_n(t)+\Pi_n \mathfrak{S}(t)\rvert^2/R) \Pi_n
\bb(\bv^R_n(t)+\Pi_n\mathfrak{S}(t),\bv^R_n(t)+\Pi_n\mathfrak{S}(t))
=0,\\
\bv^R_n(0)=\Pi_n \xi\in \h_n,
\end{split}
\end{equation}
where $\mathfrak{S}\in L^\infty(0,T;\h)$.
\begin{lem}\label{GAL-ST}
For any $\xi\in \h$, $\mathfrak{S}\in L^\infty(0,T;\h)$ there
exists $C>0$ such that
\begin{equation*}
\sup_{n\in \mathbb{N}, R>0} [\Lve \bv^R_n\Rve_{C(0,T;\h)}+\Lve
\bv^R_n\Rve_{L^2(0,T;\ve)}]<C,
\end{equation*}
\end{lem}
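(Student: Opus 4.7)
The plan is to derive a standard energy estimate for $\bv^R_n$ by testing \eqref{Eq-7} with $\bv^R_n(t)$ in the $\h$-inner product. Since $\bv^R_n \in \h_n \subset \ve$ and $\Pi_n$ is the orthogonal projection onto $\h_n$, the projection may be dropped on the nonlinear term, giving
\begin{equation*}
 \tfrac12 \tfrac{d}{dt}\lvert \bv^R_n(t)\rvert^2 + \lvert \mathrm{A}^{\frac12}\bv^R_n(t)\rvert^2 = -\theta(t)\, \langle \bb(\bw(t),\bw(t)),\bv^R_n(t)\rangle,
\end{equation*}
where $\bw(t):= \bv^R_n(t)+\Pi_n\mathfrak{S}(t)$ and $\theta(t):=\rho(\lvert \bw(t)\rvert^2/R)\in [0,1]$.

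The key observation is that the cancellation (c) of Assumption \ref{B} can be exploited without leaning on the cutoff. Writing $S:=\Pi_n\mathfrak{S}$ and expanding $\bb(\bw,\bw)=\bb(\bv^R_n,\bv^R_n)+\bb(\bv^R_n,S)+\bb(S,\bv^R_n)+\bb(S,S)$, the terms $\langle \bb(\bv^R_n,\bv^R_n),\bv^R_n\rangle$ and $\langle \bb(S,\bv^R_n),\bv^R_n\rangle$ vanish by Assumption \ref{B}-\eqref{B-c}. The remaining two are bounded using Assumption \ref{B}-\eqref{B-b} and \eqref{B-a} respectively: $\lvert \langle \bb(\bv^R_n,S),\bv^R_n\rangle\rvert \le C_1 \lVert \bv^R_n\rVert\,\lvert S\rvert\,\lvert \bv^R_n\rvert$ and $\lvert \langle \bb(S,S),\bv^R_n\rangle\rvert \le C_0 \lvert S\rvert^2 \lVert \bv^R_n\rVert$. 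Using Young's inequality and the equivalence between $\lVert \cdot\rVert$ and $\lvert \mathrm{A}^{\frac12}\cdot\rvert$ coming from \eqref{Poincare}, both terms are split so that the $\lvert \mathrm{A}^{\frac12}\bv^R_n\rvert^2$ piece is absorbed into the left-hand side, leaving
\begin{equation*}
 \tfrac{d}{dt}\lvert \bv^R_n(t)\rvert^2 + \lvert \mathrm{A}^{\frac12}\bv^R_n(t)\rvert^2 \le C\,\lvert S(t)\rvert^2 \lvert \bv^R_n(t)\rvert^2 + C\,\lvert S(t)\rvert^4,
\end{equation*}
where $C$ depends only on $C_0$, $C_1$ and the Poincaré constant, i.e.\ is independent of $n$ and $R$ (the bound $\theta \le 1$ is the only feature of the cutoff used).

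Gronwall's inequality, combined with $\lvert S(t)\rvert \le \lvert \mathfrak{S}(t)\rvert \le \lVert \mathfrak{S}\rVert_{L^\infty(0,T;\h)}$, yields a uniform bound on $\sup_{t\in[0,T]}\lvert \bv^R_n(t)\rvert^2$ depending only on $\lvert \xi\rvert$, $\lVert \mathfrak{S}\rVert_{L^\infty(0,T;\h)}$ and $T$; integrating the differential inequality in time then controls $\int_0^T \lvert \mathrm{A}^{\frac12}\bv^R_n(s)\rvert^2 ds$ in the same way. Local existence of $\bv^R_n$ is immediate from the local Lipschitz character of the truncated nonlinearity in the finite-dimensional space $\h_n$, and the a priori bound just derived rules out blow-up, so the solution extends to $[0,T]$. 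There is no serious obstacle: the argument is essentially the classical energy method for viscous fluid-type equations, the only subtle point being the bookkeeping that shows the constants are independent of the cutoff parameter $R$.
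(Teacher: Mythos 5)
Your proposal is correct and follows essentially the same route as the paper: test with $\bv^R_n(t)$, use only $\rho\le 1$ from the cutoff, kill the $\bb(\cdot,\bv^R_n)$-in-second-slot terms via Assumption \ref{B}-\eqref{B-c}, bound the surviving $\bb(\bv^R_n,S)$ and $\bb(S,S)$ terms by $C\lvert\bv^R_n\rvert\lvert S\rvert\,\lvert\mathrm{A}^{1/2}\bv^R_n\rvert$ and $C\lvert S\rvert^2\lvert\mathrm{A}^{1/2}\bv^R_n\rvert$, absorb via Young, and conclude with Gronwall. The constants are visibly independent of $n$ and $R$, exactly as in the paper's argument.
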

\begin{proof}
Multiplying \eqref{Eq-7} by $\bv^R_n$ yields that
\begin{equation*}
\begin{split}
\frac12 \frac{d}{dt} \lvert \bv^R_n(t)\rvert^2 +&\kappa \lvert
\mathrm{A}^\frac12 \bv^R_n(t)\rvert^2\\
=&-\langle \rho(\lvert \bv^R_n(t)+\Pi_n\mathfrak{S}(t)\rvert^2/R)
\Pi_n\bb(\bv^R_n(t)+\Pi_n
\mathfrak{S}(t),\bv^R_n(t)+\Pi_n\mathfrak{S}(t)),
\bv^R_n(t)\rangle.
\end{split}
\end{equation*}
Since $\rho$ is bounded by 1 and $\bb(\cdot,\cdot)$ is bilinear
\del{and $\Lve \Pi_n \Rve_{\mathcal{L}(\ve^\ast,\ve^\ast)}\le 1$}
we infer that
\begin{align*}
\lvert - \langle \rho(\lvert
\bv^R_n(t)+\Pi_n\mathfrak{S}(t)\rvert^2/R)
\Pi_n\bb(\bv^R_n(t)+\Pi_n
\mathfrak{S}(t),\bv^R_n(t)+\Pi_n\mathfrak{S}(t)),
\bv^R_n(t)\rangle\rvert\\
\le \lvert \langle \Pi_n \bb(\bv^R_n(t)+\Pi_n
\mathfrak{S}(t),\bv^R_n(t)+\Pi_n\mathfrak{S}(t)),
\bv^R_n(t)\rangle\rvert\\
\le \lvert  \langle \Pi_n
\bb(\bv^R_n(t)+\Pi_n\mathfrak{S}(t), \bv^R_n(t)+\Pi_n \mathfrak{S}(t)),
\bv^R_n(t)\rangle\lvert \\+ \lvert  \langle \Pi_n
\bb(\bv^R_n(t), \Pi_n\mathfrak{S}(t)),\bv^R_n(t)\rangle\rvert \\+ \lvert
 \langle
\Pi_n\bb(\Pi_n\mathfrak{S}(t),\Pi_n\mathfrak{S}(t)),\bv^R_n(t)\rangle\rvert.
\end{align*}
Now using Assumption \ref{B}-\eqref{B-a}, Assumption
\ref{B}-\eqref{B-c} and the fact $\Lve \Pi_n
\Rve_{\mathcal{L}(\ve^\ast,\ve^\ast)}\le 1$ we derive that
\begin{align*}
\lvert - \langle \rho(\lvert
\bv^R_n(t)+\Pi_n\mathfrak{S}(t)\rvert^2/R)
\Pi_n\bb(\bv^R_n(t)+\Pi_n
\mathfrak{S}(t),\bv^R_n(t)+\Pi_n\mathfrak{S}(t)),
\bv^R_n(t)\rangle\rvert\\ \le C\lve \bv^R_n(t)\rve \lve
\mathfrak{S}(t)\rve \lve \mathrm{A}^\frac12 \bv^R_n(t)\rve+ C \lve
\mathfrak{S}(t)\rve^2 \lve \mathrm{A}^\frac12 \bv^R_n(t)\rve.
\end{align*}
Hence, by the Young inequality we deduce that for any $\eps>0$ there exists a constant $C_\eps>0$ such that
\begin{align*}
\lvert - \langle \rho(\lvert
\bv^R_n(t)+\Pi_n\mathfrak{S}(t)\rvert^2/R)
\Pi_n\bb(\bv^R_n(t)+\Pi_n
\mathfrak{S}(t),\bv^R_n(t)+\Pi_n\mathfrak{S}(t)),
\bv^R_n(t)\rangle\rvert\\ \le C_\eps \lve \mathfrak{S}(t)\rve^2 \lve
\bv^R_n(t)\rve^2+C_\eps \lve \mathfrak{S}(t)\rve^4+\eps\lve
\mathrm{A}^\frac12 \bv^R_n(t)\rve^2.
\end{align*}
Therefore, by choosing $\eps=\kappa$ we infer that there exists a number
$C_\kappa>0$ such that
\begin{equation*}
\frac{d}{dt}\lve \bv^R_n(t)\rve^2 +\kappa \lve \mathrm{A}^\frac12
\bv_n^R(t)\rve^2 \le C_\kappa \lve \bv^R_n(t)\rve^2 \lve
\mathfrak{S}(t) \rve^2 +C_\kappa \lve \mathfrak{S}(t)\rve^4.
\end{equation*}
Owing to this last inequality and the Gronwall's inequality we
obtain
\begin{align*}
\sup_{s\in [0,T]} \lve \bv^R_n(s)\rve^2 \le (\lvert \xi\rvert^2
+C_\kappa \sup_{s\in [0,t]}\lve \mathfrak{S}(s)\rve^4)
e^{\sup_{s\in [0,T]}\lve \mathfrak{S}(s)\rve^2 T }=:C^\ast,\\
\int_0^T \lvert \mathrm{A}^\frac12 \bv^R_n(s)\rvert^2 ds\le \lve
\xi\rve^2 + C_\kappa \sup_{s\in [0,T]}\lve \mathfrak{S}(s)\rve^2
T(C^\ast+\sup_{s\in [0,T]}\lve \mathfrak{S}(s)\rve^2).,
\end{align*}
which implies the estimate in Lemma \ref{GAL-ST}.
\end{proof}
Thanks to Lemma \ref{GAL-ST} it is not difficult to prove that
there exists $C>0$ such that
\begin{equation*}
\sup_{n\in \mathbb{N}, R>0} \left\Lve \frac{d \bv^R_n}{d
t}\right\Rve_{\ve^\ast}\le C.
\end{equation*}
This estimate, the one in Lemma \ref{GAL-ST} and Aubin-Lions lemma
imply that there exists $\bv\in L^2(0,T;\h)$ and one can extract
subsequence, still denote by $\bv^R_n$, such that
\begin{align}\label{Eq-7-a}
\bv^R_n\rightarrow \bv \text{ strongly in } L^2(0,T;\h).
\end{align}
Also
\begin{align}
\bv^R_n\rightarrow \bv \text{ weakly-star in } L^\infty(0,T;\h),\label{Eq-7-b}\\
\bv^R_n\rightarrow \bv \text{ weakly in }
L^2(0,T;\ve).\label{Eq-7-c}
\end{align}
Now we can easily show that $\bv$ solves \eqref{Eq-8}. Now let
$\mathfrak{S}$ be the stochastic convolution defined in
\eqref{STOC-CONV-1}-\eqref{STOC-CONV-2}.
The stochastic process $\burn=\bv^R_n+\Pi_n\mathfrak{S}$ solves
\eqref{Hydro-2} and thanks to \eqref{Eq-7-a} we see that
\begin{equation}\label{CONV-GAL}
\burn \rightarrow \bu^R \text{ strongly in } L^2(0,T;\h) \,\,
\mathbb{P}-a.s.,
\end{equation}
where $\bu^R$ is the unique solution to \eqref{Hydro-1}.

\section{Acknowledgment}
We are very thankful to the Referee whose comments and remarks greatly improved the manuscript.
Razafimandimby's research is sponsored by the Austrian Science Fund (FWF) through the project M1487. Hakima Bessaih was supported in part by the Simons Foundation grant Nr. 283308,   the NSF grants DMS-1416689 and DMS-1418838.

\end{document}